\newcommand\no[1]{}
\theoremstyle{plain}
\newtheorem{theorem}{Theorem}[section]
\newtheorem*{theorem*}{Theorem}
\newtheorem{lemma}[theorem]{Lemma}
\newtheorem{corollary}[theorem]{Corollary}
\newtheorem{proposition}[theorem]{Proposition}
\newtheorem{conjecture}[theorem]{Conjecture}
\newtheorem{question}[theorem]{Question}
\newtheorem{definition}[theorem]{Definition}
\newtheorem{thm}{Theorem}
\theoremstyle{definition}
\newtheorem{remark}[theorem]{Remark}
\newtheorem{example}[theorem]{Example}
\newcommand{\bcon}{\begin{conjecture}}
\newcommand{\econ}{\end{conjecture}}
\newcommand{\bcor}{\begin{corollary}}
\newcommand{\ecor}{\end{corollary}}
\newcommand{\bdf}{\begin{definition}}
\newcommand{\edf}{\end{definition}}
\newcommand{\benu}{\begin{enumerate}}
\newcommand{\eenu}{\end{enumerate}}
\newcommand{\beq}{ \begin{equation} }
\newcommand{\eeq}{ \end{equation}}
\def\be { \begin{equation} }
\def\ee { \end{equation} }
\newcommand{\bexa}{\begin{example}}
\newcommand{\eexa}{\end{example}}
\newcommand{\bexe}{\begin{exercise}}
\newcommand{\eexe}{\end{exercise}}
\newcommand{\bfac}{\begin{fact}}
\newcommand{\efac}{\end{fact}}
\newcommand{\bite}{\begin{itemize}}
\newcommand{\eite}{\end{itemize}}
\newcommand{\blem}{\begin{lemma}}
\newcommand{\elem}{\end{lemma}}
\newcommand{\bmat}{\begin{pmatrix}}
\newcommand{\emat}{\end{pmatrix}}
\newcommand{\bprb}{\begin{problem}}
\newcommand{\eprb}{\end{problem}}
\newcommand{\bpro}{\begin{proposition}}
\newcommand{\epro}{\end{proposition}}
\newcommand{\bque}{\begin{question}}
\newcommand{\eque}{\end{question}}
\newcommand{\brem}{\begin{remark}}
\newcommand{\erem}{\end{remark}}
\newcommand{\bthm}{\begin{theorem}}
\newcommand{\ethm}{\end{theorem}}
\newcommand{\bpr}{\begin{proof}}
\newcommand{\epr}{\end{proof}}
\newcommand*{\doublerightarrow}[2]{\mathrel{
  \settowidth{\@tempdima}{$\scriptstyle#1$}
  \settowidth{\@tempdimb}{$\scriptstyle#2$}
  \ifdim\@tempdimb>\@tempdima \@tempdima=\@tempdimb\fi
  \mathop{\vcenter{
    \offinterlineskip\ialign{\hbox to\dimexpr\@tempdima+1em{##}\cr
    \rightarrowfill\cr\noalign{\kern.5ex}
    \rightarrowfill\cr}}}\limits^{\!#1}_{\!#2}}}
\newcommand*{\triplerightarrow}[1]{\mathrel{
  \settowidth{\@tempdima}{$\scriptstyle#1$}
  \mathop{\vcenter{
    \offinterlineskip\ialign{\hbox to\dimexpr\@tempdima+1em{##}\cr
    \rightarrowfill\cr\noalign{\kern.5ex}
    \rightarrowfill\cr\noalign{\kern.5ex}
    \rightarrowfill\cr}}}\limits^{\!#1}}}
\def\BC{\mathbb C}
\def\BN{\mathbb N}
\def\BZ{\mathbb Z}
\def\BM{\mathbb M}
\def\BN{\mathbb N}
\def\Dcob{\mathrm{DeCob}}
\def\Bim{\mathrm{Mor}}
\def\Cut{\mathsf{Cut}}
\def\2Mor{\mathrm{2Mor}}
\def\OSL{{\mathcal O}_{q^2}(\mathrm{SL}(2))}
\def\bSi{\overline \Sigma}
\def\cS{\mathscr S}
\def\ot{\otimes}
\def\cE{\mathcal E}
\def\cF{\mathcal F}
\def\bk{\mathbf k}
\def\cP{\mathcal P}
\def\Id{\mathrm{Id}}
\def\fS{\mathfrak S}
\def\bfS{\overline{\fS}}
\def\D{\Delta}
\def\cY{\mathcal Y}
\def\ev{{\mathrm{ev}}}
\def\embed{\hookrightarrow}
\def\bbS{\overline \Sigma}
\def\cY{\mathcal Y}
\def\ev{{\mathrm{ev}}}
\def\embed{\hookrightarrow}
\def\SM{\cS_{q^{1/2}}(M)}
\def\pS{\partial \Sigma}
\def\Ss{\cSs}
\def\Tr{\mathrm{Tr}}
\DeclareMathOperator{\tr}{\mathrm tr}
\def\al{\alpha}
\def\ve{\varepsilon}
\def\be { \begin{equation} }
\def\ee { \end{equation} }
\def\P{\mathcal P}
\def\nc{\newcommand}
\nc\FIGjpg[2]{\begin{figure}
    \includegraphics[width=\textwidth]{#1.jpg} 
   \caption{#2}
    \label{fig:#1}
    \end{figure}}
\nc\FIGc[3]{\begin{figure}[htpb]
    \includegraphics[height=#3]{#1-eps-converted-to.pdf}
    \caption{#2}
    \label{fig:#1}
    \end{figure}}
     \nc\FIGceps[3]{\begin{figure}[htpb]
    \includegraphics[height=#3]{#1-eps-converted-to.pdf}
    \caption{#2}
    \label{fig:#1}
    \end{figure}}   
\newcommand\incl[2]{{\includegraphics[height=#1]{#2-eps-converted-to.pdf}}}
\newcommand\incleps[2]{{\includegraphics[height=#1]{#2-eps-converted-to.pdf}}}
\def\leftve{\raisebox{-8pt}{\incleps{.8 cm}{leftve}}}
\def\leftvebar{\raisebox{-8pt}{\incleps{.8 cm}{leftvebar}}}
\def\leftved{\raisebox{-8pt}{\incleps{.8 cm}{leftved}}}
\def\emptyr{\raisebox{-8pt}{\incl{.8 cm}{empty}}}
\def\emptys{\raisebox{-8pt}{\incl{.8 cm}{emptys}}}
\def\emptyd{\raisebox{-8pt}{\incleps{.8 cm}{emptyd}}}
\def\cross{  \raisebox{-8pt}{\incl{.8 cm}{cross}} }
\def\resoP{  \raisebox{-8pt}{\incl{.8 cm}{resoP}} }
\def\resoN{  \raisebox{-8pt}{\incl{.8 cm}{resoN}} }
\def\kinkp{  \raisebox{-8pt}{\incleps{.8 cm}{kinkp}} }
\def\kinkn{  \raisebox{-8pt}{\incleps{.8 cm}{kinkn}} }
\def\kinkzero{  \raisebox{-8pt}{\incleps{.8 cm}{kinkzero}} }
\def\reordonepn{  \raisebox{-8pt}{\incl{.8 cm}{reord1pn}} }
\def\reordtwopn{  \raisebox{-8pt}{\incl{.8 cm}{reord2pn}} }
\def\reordnp{  \raisebox{-8pt}{\incl{.8 cm}{reordpn}} }
\def\reordpn{  \raisebox{-8pt}{\incl{.8 cm}{reord1}} }
\def\reordone{  \raisebox{-8pt}{\incl{.8 cm}{reord1}} }
\def\reordtwo{  \raisebox{-8pt}{\incl{.8 cm}{reord2}} }
\def\reordthree{  \raisebox{-8pt}{\incl{.8 cm}{reord3}} }
\def\trivloop{  \raisebox{-8pt}{\incl{.8 cm}{trivloop}} }
\def\sign{\mathsf{Sign}}
\begin{document}

\title{Stated skein modules of 3-manifolds and TQFT}
\author[Francesco Costantino]{Francesco Costantino}
\address{Institut de Math\'ematiques de Toulouse, 118 route de Narbonne,
F-31062 Toulouse, France}
\email{francesco.costantino@math.univ-toulouse.fr}

\author[Thang  T. Q. L\^e]{Thang  T. Q. L\^e}
\address{School of Mathematics, 686 Cherry Street,
 Georgia Tech, Atlanta, GA 30332, USA}
\email{letu@math.gatech.edu}

\date{}

\thanks{
2010 {\em Mathematics Classification:} Primary 57N10. Secondary 57M25.\\
{\em Key words and phrases: Kauffman bracket skein module, TQFT}}

\begin{abstract}We study the behaviour of the Kauffman bracket skein modules of 3-manifolds under gluing along surfaces. For this purpose we extend the notion of Kauffman bracket  skein modules to $3$-manifolds with marking consisting of open intervals and circles in the boundary. The new module is called the {\em stated skein module}.

The first main results concern non-injectivity of certain natural maps defined when forming connected sums along a sphere or along a closed disk. These maps are injective for surfaces, or for generic quantum parameter, but we show that in general they are not injective when the quantum parameter is a root of 1. The result applies to the classical skein modules as well.
A particular interesting result is that when the quantum parameter is a root of 1, the empty skein is zero  in a connected sum where each constituent manifold  has non-empty marking. 
 We also prove various non injectivity results  for the Chebyshev-Frobenius map and the natural map induced by the  deletion of marked balls. 
 
We then consider the general case of gluing along a surface,
showing that the stated skein module can be interpreted as a monoidal symmetric functor from a category of ``decorated cobordisms'' to a Morita category of algebras and their bimodules. We apply this result to deduce several properties of stated skein modules as a Van-Kampen like theorem as well as a computation through Heegaard decompositions and a relation to Hochshild homology for trivial circle bundles over surfaces.

\end{abstract}

\maketitle

\def\pbbS{\partial \bbS}
\def\bSP{(\bbS,\cP)}
\def\cA{\mathcal A}
\def\cB{\mathcal B}
\def\Si{\Sigma}
\def\fB{\mathfrak B}
\def\pr{\mathrm{pr}}
\def\cO{\mathcal O}

\def\ord{\mathrm{ord}}
\def\Sqq{\cS_{q^{1/2}}\MN}
\def\Sq{\cS_{q^{1/2}}}

\def\poS{\partial_0\Sigma}
\def\cSs{\cS}
\def\RcSs{\cS^{rel}_{\mathrm s}}
\def\zRcSs{\cS^{rel}_{\mathrm s,0}}

\def\cR{{R}}
\def\basics{basic skein}
\def\YD{\cY(\D)}
\def\tYD{\tilde \cY(\D)}
\def\YtD{\cY^{(2)}(\D)}
\def\tYtD{\tilde \cY^{(2)}(\D)}
\def\bve{{\boldsymbol{\ve}}}
\def\bm{{\mathbf m}}
\def\hYeD{\tilde \cY^\ev(\D)}
\def\YeD{\cY^\ev(\D)}
\def\pfS{\partial \fS}
\def\pbfS{\partial \bfS}
\def\bove{\boldsymbol{\epsilon}}
\def\bomu{{\vec \mu}}
\def\bonu{{\vec \nu}}
\def\Gr{\mathrm{Gr}}
\def\bl{\mathbf{l}}
\def\onto{\twoheadrightarrow}
\def\Stink{\mathrm{St}^\uparrow(\bk)}
\def\Stinkp{\mathrm{St}^\uparrow(\bk')}
\def\sincr{s^\uparrow}
\def\St{\mathrm{St}}

\def\tF{\tilde {\cF}}
\def\tE{\tilde {\cE}}
\def\tfT{\tilde {\fT}}
\def\bcSs{\overline{\cS}_s}
\def\cN{\mathcal N}
\def\MN{(M,\cN)}
\def\pM{\partial M}
 \def\cSsp{\cS_{+}}
   \def\TrD{\Tr_\D}
   \def\hTrD{\widehat{\Tr}_\D}
   \def\hcSs{\widehat{\cS}_s}
   \def\SS{\cS(\fS)}
   \def\JWm{{f}}
\tableofcontents
\section{Introduction}

\subsection{Kauffman bracket skein module} The Kauffman skein module $\SM$ of an oriented 3-manifold $M$, introduced by Przytycki \cite{Prz} and Turaev \cite{Turaev0,Turaev}, serves as a bridge between quantum and classical topology, and helps solving many important problems in low-dimensional topology. By definition $\SM$   is the $\BC$-vector space spanned by isotopy classes of unoriented framed links in $M$ subject to the Kauffman relations (\cite{Kauffman})
\begin{align*}
\begin{tikzpicture}[scale=0.8,baseline=0.3cm]
\fill[gray!20!white] (-0.1,0)rectangle(1.1,1);
\begin{knot}[clip width=8,background color=gray!20!white]
\strand[very thick] (1,1)--(0,0);
\strand[very thick] (0,1)--(1,0);
\end{knot}
\end{tikzpicture}
&=q
\begin{tikzpicture}[scale=0.8,baseline=0.3cm]
\fill[gray!20!white] (-0.1,0)rectangle(1.1,1);
\draw[very thick] (0,0)..controls (0.5,0.5)..(0,1);
\draw[very thick] (1,0)..controls (0.5,0.5)..(1,1);
\end{tikzpicture}
+q^{-1}
\begin{tikzpicture}[scale=0.8,baseline=0.3cm]
\fill[gray!20!white] (-0.1,0)rectangle(1.1,1);
\draw[very thick] (0,0)..controls (0.5,0.5)..(1,0);
\draw[very thick] (0,1)..controls (0.5,0.5)..(1,1);
\end{tikzpicture}\, ,\qquad
\begin{tikzpicture}[scale=0.8,baseline=0.3cm]
\fill[gray!20!white] (0,0)rectangle(1,1);
\draw[very thick] (0.5,0.5)circle(0.3);
\end{tikzpicture}
=(-q^2 -q^{-2})
\begin{tikzpicture}[scale=0.8,baseline=0.3cm]
\fill[gray!20!white] (0,0)rectangle(1,1);
\end{tikzpicture}\, .
\end{align*}
 See Section \ref{sec.prelim} for details. Here $q^{1/2}$ is a non-zero complex number.

The calculation of $\SM$ is in general difficult. One attractive approach is to cut $M$ into simpler pieces and try to understand $\SM$ from the skein modules of the pieces. For example when $M= M_1 \# M_2$ is the connected sum of $M_1$ and $M_2$ there is a natural $\BC$-linear homomorphism
\beq
\Psi_{M_1,M_2} : \Sq(M_1)  \ot_\BC \Sq M_2 \to \SM, \quad (x \ot y) \to x \sqcup y.
\notag
\eeq
Przytycki \cite{Prz} showed that if $q$ is not a root of 1 then $\Psi_{M_1, M_2}$ is bijective. Our first result is to show that when $q$ is a root of 1 the kernel of $f_{M_1,M_2}$ is generally big.

\begin{thm}[Special case of Theorem \ref{thm.connected3}]
\label{thm.t1}
 Suppose $q^4$ is a primitive $N$-th root of 1. The kernel of $\Psi_{M_1 \# M_2} $ contains $F_{q^{1/2} } (M_1) \ot F_{q^{1/2} } (M_2)$, where $F_{q^{1/2} } (M)$ is the subspace of $\SM$ spanned by closures of the Jones-Wenzl idempotent $\JWm_{N-1}$.
\end{thm}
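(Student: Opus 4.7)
The plan is to reduce to the case of a single Jones--Wenzl closure on each side and then exploit the vanishing of the quantum integer $[N]_q$ at the given root of unity, together with the defining Temperley--Lieb annihilating property of $\JWm_{N-1}$.

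By $\mathbb C$-bilinearity of $\Psi_{M_1,M_2}$, it suffices to prove $x \sqcup y = 0$ in $\Sq(M_1 \# M_2)$ whenever $x$ is a single closure of $\JWm_{N-1}$ in $M_1$ and $y$ is a single closure of $\JWm_{N-1}$ in $M_2$. Concretely, realize $x$ as the framed $(N-1)$-cable of a framed knot $\gamma_1 \subset M_1$ with a single JW box inserted at one point along the cable, and similarly $y$ as the $(N-1)$-cable of $\gamma_2 \subset M_2$. Since $\JWm_{N-1}$ is a central idempotent of the Temperley--Lieb algebra, each JW box can be slid freely along its cable, so one may position the JW box of $x$ inside a small half-ball $B_1 \subset M_1$ adjacent to the connect-sum sphere $S \subset M_1\#M_2$, and the JW box of $y$ inside the opposite half-ball $B_2 \subset M_2$, in such a way that $B := B_1 \cup_S B_2$ is an embedded 3-ball in $M_1 \# M_2$.

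The fundamental algebraic input at the root of unity is the identity $(-1)^{N-1}[N]_q = 0$, which geometrically says that the standard closure of $\JWm_{N-1}$ inside any embedded 3-ball represents the zero element of the Kauffman bracket skein module. The core step is then a fusion manoeuvre inside $B$: using the skein relations and the defining killing property $U_i \JWm_{N-1} = \JWm_{N-1} U_i = 0$ for every Temperley--Lieb cap-cup $U_i$, one rewrites the pair of boxed JW projectors in $B_1$ and $B_2$ as a single closed cable of $\JWm_{N-1}$ confined to $B$, modulo terms containing a $U_i$ adjacent to one of the JW boxes which therefore vanish. The surviving term vanishes by the root-of-unity identity above, yielding $x \sqcup y = 0$.

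The main obstacle is justifying the fusion step cleanly: the two JW boxes sit on opposite sides of $S$ and their cables $\gamma_1,\gamma_2$ are not directly connected across $S$, so combining them into a single closed $\JWm_{N-1}$ inside $B$ requires inserting an auxiliary resolution of identity of the Temperley--Lieb category along a small disk $D \subset S$ disjoint from $\gamma_1,\gamma_2$, and then cancelling all but one term via the $U_i$-killing property. At generic $q$, all quantum integers entering this normalisation are invertible and the identity does not truncate, so no such cancellation is available --- consistent with Przytycki's theorem that $\Psi_{M_1,M_2}$ is then injective. This is precisely the place where Theorem~\ref{thm.connected3}, in its general form, is expected to invoke the stated-skein and gluing formalism developed later in the paper.
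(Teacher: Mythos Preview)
Your proposal has a genuine gap, and you have essentially identified it yourself. The ``fusion manoeuvre'' --- turning two disjoint closures of $\JWm_{N-1}$, one on each side of $S$, into a single closed $\JWm_{N-1}$ in a ball --- is not a skein identity and cannot be made into one by a resolution-of-identity trick. Any Temperley--Lieb resolution of the identity on a nontrivial number of strands carries quantum-integer denominators, and precisely $[N]_q$ appears among them, so the expansion you need is unavailable at this root of unity. More basically, nothing in your setup ever makes the two cables interact: $\gamma_1$ and $\gamma_2$ remain disjoint, and no skein relation creates a strand crossing $S$. (There is also a smaller issue earlier: a general closure of $\JWm_{N-1}$ is \emph{not} an $(N-1)$-cable of a single framed knot --- the closing tangle $\beta$ is arbitrary --- so your first reduction is unjustified, though this is not the essential obstruction.)

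The paper's argument is quite different from what you anticipate and does \emph{not} use the stated-skein or gluing formalism. The key move is topological: take one strand at the top of the pattern $v_{k,m}$ (two Jones--Wenzl boxes on either side of $S$, joined by $k$ horizontal strands above $m$ horizontal strands) and slide it over the separating sphere $S$. This gives $u_{k,m}$, the same picture with that strand now running underneath, at the cost of a framing factor $q^{-6}$. Separately, a direct skein computation using only the non-returnable property of $\JWm_{k+m}$ (Lemma~\ref{r.uvkm}) shows
\[
u_{k,m}=q^{4k-2}v_{k,m}+q^{2k-4}(q^{2k}-q^{-2k})\,v_{k-1,m+1}.
\]
Combining these yields the recursion $(q^{-2k-2}-q^{2k+2})\,v_{k,m}\overset{\#}{=}(q^{2k}-q^{-2k})\,v_{k-1,m+1}$, and iterating down to $k=1$ gives
\[
(q^{-2k-2}-q^{2k+2})\,v_{k,m}\overset{\#}{=}(-1)^k(q^{2}-q^{-2})\,v_{0,m+k}.
\]
Setting $k=N-1$, $m=0$ kills the left-hand coefficient (since $\mathrm{ord}(q^4)=N$) while the right-hand coefficient survives, forcing every closure of $v_{0,N-1}$ --- i.e.\ every element of $\Psi(F_{q^{1/2}}(M_1)\otimes F_{q^{1/2}}(M_2))$ --- to vanish. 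The essential idea you are missing is the handle-slide over $S$, which is what actually makes the two sides interact.
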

We recall  the Jones-Wenzl idempotent in Subsection \ref{sec.JW}. In particular when $M_i$ are thickened surfaces we show that each $F_{q^{1/2} } (M_i)$ is non-zero, hence the kernel of $\Psi_{M_1 \# M_2}$ is non-trivial. For work related to Theorem \ref{thm.t1} see Remark \ref{rem.connected}.

Suppose $q^4$ is a primitive $N$-th root of unity. Then $\ve:= q^{N^2}$ has property $\ve^8=1$. There exists a 
$\BC$-linear map, known as the Chebyshev-Frobenius map,
$$ \Phi_{q^{1/2}} : \cS_\ve(M) \to \SM$$
which was constructed by Bonahon and Wong \cite{BW}, see also \cite{Le3} especially for general 3-manifolds. For the thickened surfaces it is known that  $\Phi_{q^{1/2}}$ is injective. We show that in general $\Phi_{q^{1/2}}$ is not injective.

\begin{thm}[See Theorem \ref{thm.Frobenius}]
\label{thm.t2}
Assume $q^4$ is a primitive $N$-th root of 1 with $N >1$. There exists a compact oriented 3-manifold $M$ such that the Chebyshev-Frobenius homomorphism $ \Phi_{q^{1/2}} : \cS_\ve(M) \to \SM$ is not injective.
\end{thm}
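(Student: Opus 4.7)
The plan is to exploit Theorem \ref{thm.t1} through a commutative naturality square relating the Chebyshev--Frobenius map and connected sum. Since $\Phi_{q^{1/2}}$ is defined component-wise by threading with the $N$-th Chebyshev polynomial of the first kind $T_N$, and a framed link in a connected sum is a disjoint union of the pieces sitting in $M_1$ and $M_2$, for $M=M_1\#M_2$ one has
\[
\begin{tikzcd}[column sep=large]
\cS_{\ve}(M_1)\otimes \cS_{\ve}(M_2) \arrow[r, "\Psi_{M_1,M_2}^{\ve}"] \arrow[d, "\Phi_{q^{1/2}}\otimes \Phi_{q^{1/2}}"'] & \cS_{\ve}(M) \arrow[d, "\Phi_{q^{1/2}}"] \\
\cS_{q^{1/2}}(M_1)\otimes \cS_{q^{1/2}}(M_2) \arrow[r, "\Psi_{M_1,M_2}^{q^{1/2}}"'] & \cS_{q^{1/2}}(M).
\end{tikzcd}
\]
It therefore suffices to produce $z\in \cS_\ve(M_1)\otimes \cS_\ve(M_2)$ with $(\Phi_{q^{1/2}}\otimes\Phi_{q^{1/2}})(z)\in F_{q^{1/2}}(M_1)\otimes F_{q^{1/2}}(M_2)$ and $\Psi_{M_1,M_2}^{\ve}(z)\neq 0$; Theorem \ref{thm.t1} will then force $\Psi_{M_1,M_2}^{\ve}(z)$ into the kernel of $\Phi_{q^{1/2}}$.

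For the manifolds, take $M_i=\fS_i\times[0,1]$ with $\fS_i$ a closed oriented surface of positive genus; these satisfy $F_{q^{1/2}}(M_i)\neq 0$, since the $\JWm_{N-1}$-closure (equivalently, the $(N-1)$-st Chebyshev-of-second-kind threading $S_{N-1}(\gamma_i)$) along a non-separating simple closed curve $\gamma_i\subset\fS_i$ is non-zero in $\cS_{q^{1/2}}(M_i)$. The central step is to choose $x_i\in\cS_\ve(M_i)$, a polynomial expression in $\gamma_i$ obtained by applying a suitable Chebyshev polynomial at parameter $\ve$, such that $\Phi_{q^{1/2}}(x_i)$ equals a non-zero scalar multiple of $S_{N-1}(\gamma_i)$. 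The existence of such an $x_i$ rests on a polynomial identity valid at $q^4=\zeta_N$: the composition $S_{N-1}\circ T_N$ of Chebyshev polynomials collapses at this root of unity, reflecting the known fact that $\JWm_{N-1}$-closures are distinguished eigenvectors for the Chebyshev--Frobenius subalgebra. Granting this, the naturality square combined with Theorem \ref{thm.t1} immediately yields $\Phi_{q^{1/2}}(\Psi^\ve(x_1\otimes x_2))=0$.

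It remains to show $\Psi^{\ve}(x_1\otimes x_2)\neq 0$ in $\cS_\ve(M)$. Since $(q^4)^N=1$, we have $\ve^4=(q^{N^2})^4=(q^{4N})^N=1$, so $\ve\in\{\pm 1,\pm i\}$; at these semi-classical parameters $\cS_\ve(\fS_i\times[0,1])$ is well understood---at $\ve=-1$ it is the coordinate ring of the $\mathrm{SL}_2(\BC)$-character variety of $\fS_i$ by Bullock's theorem, and the other cases are handled analogously---and the connected sum map $\Psi^\ve_{M_1,M_2}$ is injective via the corresponding factorization of character varieties. The non-vanishing of each $x_i$ follows from the known injectivity of $\Phi_{q^{1/2}}$ on thickened surfaces and the non-vanishing of $S_{N-1}(\gamma_i)$. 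The main obstacle is the polynomial identity in Step 2, identifying $\Phi_{q^{1/2}}(x_i)$ with a non-zero element of $F_{q^{1/2}}(M_i)$; should a clean identity prove elusive, the finer kernel description in Theorem \ref{thm.connected3} (of which Theorem \ref{thm.t1} is a special case) should supply enough kernel classes inside the image of $\Phi_{q^{1/2}}\otimes \Phi_{q^{1/2}}$ to close the argument.
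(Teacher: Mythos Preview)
Your overall architecture---the naturality square for $\Phi_{q^{1/2}}$ with respect to $\Psi$, followed by Theorem~\ref{thm.t1} to kill the image at level $q^{1/2}$, and a character-variety argument to certify non-vanishing at level $\ve$---is exactly the paper's strategy. The genuine gap is your ``central step.'' You ask for $x_i=p(\gamma_i)\in\cS_\ve(M_i)$ with $\Phi_{q^{1/2}}(x_i)=c\,S_{N-1}(\gamma_i)$. Since $\Phi_{q^{1/2}}(p(\gamma_i))=p(T_N(\gamma_i))$ and the powers $\gamma_i^k$ are linearly independent in $\cS_{q^{1/2}}$ of a thickened surface, this would force the polynomial identity $p(T_N(x))=c\,S_{N-1}(x)$; but $\deg p(T_N(x))$ is a multiple of $N$ while $\deg S_{N-1}(x)=N-1$, so no such $p$ exists. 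The paper circumvents this by aiming only for $F_{q^{1/2}}(M_i)$ rather than $S_{N-1}(\gamma_i)$ itself: it takes $x_i=\gamma_i^2-4$ and uses the exact identity
\[
T_N(x)^2-4=(u^N-u^{-N})^2=(u-u^{-1})^2\,S_{N-1}(x)^2=(x^2-4)\,S_{N-1}(x)^2,
\]
so that $\Phi_{q^{1/2}}(\gamma_i^2-4)=(\gamma_i^2-4)\,S_{N-1}(\gamma_i)^2\in F_{q^{1/2}}(M_i)$. This is precisely the ``clean identity'' you were missing.

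Two further points where your sketch would need tightening. First, the paper takes $M_i$ to be solid tori (thickened annuli) rather than thickenings of closed surfaces; then $\pi_1(M_1\#M_2)$ is free on two generators and $\BC[\chi(M)]=\BC[u_1,u_2,u_{12}]$, so the image of $\Psi^\ve((\gamma_1^2-4)\otimes(\gamma_2^2-4))$ is visibly $(u_1^2-4)(u_2^2-4)\neq 0$. Your assertion that $\Psi^\ve$ is injective ``via factorization of character varieties'' is not needed and is delicate (the universal character ring can have nilpotents); the paper instead just checks non-vanishing after the surjection $\cS_\ve(M)\twoheadrightarrow\BC[\chi(M)]$. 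Second, the case $\ve^2=\pm i$ is not ``handled analogously'' to $\ve^2=\pm1$: one needs Sikora's theorem that for $M$ embeddable in a homology sphere, $\cS_\ve(M)$ again surjects onto $\BC[\chi(M)]$ (up to signs), which the paper invokes explicitly.
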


Theorems \ref{thm.t1} and \ref{thm.t2}, as well as their analogs for the stated skein module case, exhibit the surprising fact  that at roots of 1 certain skein identities are not local as they can only be established by means of tangles far away from their supports. This is very counter-intuitive in the theory of skein modules.

\subsection{Marked 3-manifolds, stated skein modules, and non-injectivity} Suppose $S \subset M$ is a properly embedded surface. Let $M'$ be the result of cutting $M$ along $S$. 
\no{
For $i=1,2$ suppose $S_i \subset \partial M'$ is a compact surface in the boundary of a compact oriented 3-manifold $M'$, possibly disconnected, and $f: S_1 \to S_2$ is an orientation reversing diffeomorphism.  Let $M= M'/f$, the result of identifying $S_1$ with $S_2$ via $f$. Let $S\subset M$ be the common image of $S_1$ and $S_2$. We also say that $M'$ is the result of cutting $M$ along $S$.}
 The goal is to understand the skein module of $M$ though that of $M'$. For this purpose in \cite{Le:TDEC,CL,BL,LY1} we with collaborators extended the definition of skein modules of 3-manifolds to {\em stated} skein modules of {\em marked 3-manifolds}, where the marking consists of disjoint oriented interval in the boundary of $M$. 
A main result is the existence of a cutting homomorphism relating the stated skein modules of $M$ and $M'$ when $S=D^2$, the closed disk. When $M$ is a thickened surface the cutting homomorphism is always injective, by \cite[Theorem 1]{Le:TDEC}. We show that in general the cutting homomorphism is not injective.
\begin{thm}[See Theorem \ref{thm.CH}]  Suppose $q^4$ is a primitive $N$-root of 1 with $N >1$. There exists a marked 3-manifold $M$ and a properly embedded disk $D \embed M$ such that the cutting homomorphism corresponding to the cutting of $M$ along $D$ is not injective.
\end{thm}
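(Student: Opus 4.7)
The plan is to deduce non-injectivity of the cutting homomorphism by combining the disk-version of Theorem \ref{thm.connected3} with the natural compatibility between the cutting and gluing maps.

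Setup: I would choose marked 3-manifolds $N_1,N_2$ each with non-empty marking and with $F_{q^{1/2}}(N_i)\neq 0$, then form $M=N_1\,\natural\, N_2$, the boundary connected sum along a disk $D$ disjoint from the existing markings. The connecting disk descends to a properly embedded disk $D\subset M$ with $\partial D\subset \partial M$. Cutting $M$ along $D$ produces $M'=N_1'\sqcup N_2'$, where $N_i'$ denotes $N_i$ equipped with one additional marked arc on the cut disk inherited from $D$. The choice of $N_i$ cannot be purely thickened surfaces, since for those the cutting is injective by \cite[Theorem 1]{Le:TDEC}; one must take $N_i$ in a class where the inclusion $\cS(N_i)\hookrightarrow\cS(N_i')$ induced by the added marking fails to be injective, for instance itself a previously constructed connected sum, so that root-of-unity relations beyond those of $F_{q^{1/2}}(N_i)$ appear in $\cS(N_i')$.

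Key diagram: The cutting map $\rho_D\colon\cS(M)\to\cS(M')$ and the disk-connected-sum gluing map $\Psi\colon\cS(N_1)\otimes\cS(N_2)\to\cS(M)$ fit into the commutative triangle
\[
\rho_D\circ\Psi\;=\;\iota,
\]
where $\iota=\iota_1\otimes\iota_2\colon \cS(N_1)\otimes\cS(N_2)\to\cS(N_1')\otimes\cS(N_2')=\cS(M')$ is the natural inclusion adding an empty state on the new cut marking. This identity follows because $\Psi(x_1\otimes x_2)=x_1\sqcup x_2\in\cS(M)$ is supported away from $D$, so the cut simply separates the two pieces without introducing any non-trivial state sum. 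In particular $\ker\Psi\subseteq\ker\iota$ holds automatically.

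Production of kernel element: The target is to exhibit $z\in\cS(N_1)\otimes\cS(N_2)$ with $\Psi(z)\neq 0$ in $\cS(M)$ but $\iota(z)=0$ in $\cS(M')$. Given such a $z$, the element $\Psi(z)\in\cS(M)$ is non-zero and satisfies $\rho_D(\Psi(z))=\iota(z)=0$, placing it in $\ker\rho_D\setminus\{0\}$. The existence of such a $z$ amounts to the strict inclusion $\ker\iota\supsetneq\ker\Psi$; I would argue this by exploiting the fact that $N_i'$ carries additional marking that makes it amenable to an independent application of Theorem \ref{thm.connected3}, forcing additional vanishing in $\cS(N_i')$ beyond the Jones-Wenzl-closure relations that feed the kernel of $\Psi$.

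Main obstacle: The hardest part is the explicit production of the element $z\in\ker\iota\setminus\ker\Psi$, which requires a careful choice of the building blocks $N_1,N_2$ so that the addition of the cut marked arc introduces genuinely new skein relations in $\cS(N_i')$ while not collapsing the non-triviality of $\Psi(z)$ in $\cS(M)$. This reflects the non-local character of skein identities at roots of unity emphasized immediately after Theorem \ref{thm.t2}, and in particular explains why this theorem must go beyond the surface case of \cite{Le:TDEC}.
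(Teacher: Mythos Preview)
Your proposal is not a proof but a strategy that explicitly leaves its crucial step unresolved. You set up the commutative triangle $\rho_D\circ\Psi=\iota$ and correctly observe that an element $z\in\ker\iota\setminus\ker\Psi$ would yield $\Psi(z)\in\ker\rho_D\setminus\{0\}$. But you never construct such a $z$; you label this the ``main obstacle'' and offer only a vague suggestion that $N_i$ should itself be a connected sum so that the extra marking in $N_i'$ activates Theorem~\ref{thm.zero}. Even granting that idea, you would still need to verify that $\Psi(z)\neq 0$ in the resulting $M=N_1\natural N_2$, and you give no argument for this. Since non-vanishing in $\cS(M)$ is precisely the delicate point at roots of unity, the proposal as written has a genuine gap.

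The paper's proof avoids this difficulty by a much more direct route. It starts with a manifold $M$ having \emph{empty} marking, namely the complement of a $3$-ball in $S^2\times S^1$. The non-vanishing of the empty skein in $\Sq(M)$ is then a known fact, read off from the Hoste--Przytycki computation of $\cS(S^2\times S^1)$. Cutting along a disk $E=d'\times\{t\}$ produces $(M',\cN')$ whose only markings are the two copies $e_1,e_2$ of the cut arc; these are separated by the sphere $S^2\times\{t'\}$ for $t'\notin c$, so Theorem~\ref{thm.zero} immediately gives $\emptyset=0$ in $\Sq(M',\cN')$. Thus the empty skein itself witnesses non-injectivity. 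The key idea you are missing is to take $M$ unmarked to begin with: then all markings in $M'$ come from the cut, their sphere-separation is arranged by the topology of $M$, and the non-vanishing side of the argument reduces to an existing computation rather than a new construction.
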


When $q$ is not a root of 1, one can show (see Proposition \ref{prop:sphere}) that $\Sq(M_1 \# M_2)$ is spanned by skeins with support disjoint from the cutting sphere $S$ which realises the connected sum. It turns out that the picture is quite opposite at root of 1, in the presence of markings.

\begin{thm}[ Special case of Theorem \ref{thm.zero} ]Suppose $q^4$ is a primitive $N$-th root of 1 with $N >1$.
Assume that each of $M_1$ and $M_2$ is connected and has at least one marking. Then every skein in $ M_1 \# M_2$ whose support is disjoint from the cutting sphere $S$ is equal to 0. In particular the empty skein is 0.
\end{thm}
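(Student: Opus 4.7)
The plan is to reduce everything to a local construction near each marking together with Theorem~\ref{thm.connected3}, the stated skein generalization of Theorem~\ref{thm.t1}. The argument splits into (a) showing the empty skein vanishes in $\cS(M_1\#M_2)$, and (b) propagating this vanishing to arbitrary skeins disjoint from $S$.

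For step (a), I use the marking $c_i\subset\partial M_i$ to build a distinguished element $e_i\in\cS(M_i)$ supported in a small collar ball $U_i$ of $c_i$. Concretely, I place $N-1$ parallel arcs in $U_i$ with all endpoints on $c_i$, insert $\JWm_{N-1}$ in the middle, and close off the cable using a specific choice of boundary states at $c_i$. By construction $e_i$ is a closure of $\JWm_{N-1}$, so belongs to (the stated analog of) the subspace $F_{q^{1/2}}(M_i)$. I then perform a local reduction inside $U_i$: expanding $\JWm_{N-1}$ recursively and applying the stated-arc relations near the marking, the configuration collapses to a scalar multiple $\lambda_i\cdot\emptyset_{M_i}$ of the empty skein, where $\lambda_i\neq 0$. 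The nonvanishing is crucial and depends on closing the cable by boundary-state pairing rather than by quantum trace: while the quantum trace $(-1)^{N-1}[N]$ vanishes at $q^4$ a primitive $N$-th root, the particular matrix coefficient of $\JWm_{N-1}$ picked out by the states at $c_i$ is a nonzero power of $q$. Combining the local identity with Theorem~\ref{thm.connected3} applied to $e_1\otimes e_2\in F_{q^{1/2}}(M_1)\otimes F_{q^{1/2}}(M_2)$ gives
\[
\lambda_1\lambda_2\cdot\emptyset_{M_1\#M_2}\;=\;e_1\sqcup e_2\;=\;\Psi_{M_1,M_2}(e_1\otimes e_2)\;=\;0,
\]
whence $\emptyset_{M_1\#M_2}=0$.

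For step (b), given $x=x_1\sqcup x_2$ with $x_i\in\cS(M_i)$ and supports disjoint from $S$, I shrink $U_i$ so that the support of $e_i$ is disjoint from that of $x_i$. The vanishing $\Psi_{M_1,M_2}(e_1\otimes e_2)=0$ produced by Theorem~\ref{thm.connected3} is obtained via a manipulation localized around $S$ and the $e_i$'s, so adding the disjoint spectator $x_1\sqcup x_2$ yields $(x_1\sqcup x_2)\sqcup(e_1\sqcup e_2)=0$ in $\cS(M_1\#M_2)$. Using the local identity $e_1\sqcup e_2=\lambda_1\lambda_2\cdot\emptyset_{M_1\#M_2}$ as a spectator relation on the left side (valid since the supports are disjoint from $x_1\sqcup x_2$) gives $\lambda_1\lambda_2\cdot(x_1\sqcup x_2)=0$, hence $x_1\sqcup x_2=0$.

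The main obstacle is the local verification that $\lambda_i\neq 0$: the closure pattern and boundary states must be chosen so that the computation reduces to a genuinely nonvanishing matrix coefficient of $\JWm_{N-1}$ rather than to the vanishing quantum trace $(-1)^{N-1}[N]$. This is precisely the technical input that distinguishes the marked setting from the unmarked one, where Theorem~\ref{thm.t1} alone is not strong enough to force the empty skein to vanish.
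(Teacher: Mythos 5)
Your route is genuinely different from the paper's: you reduce the theorem to Theorem~\ref{thm.connected3} used as a black box plus a local nonvanishing lemma, whereas the paper never invokes Theorem~\ref{thm.connected3} itself but re-runs the \emph{computation} behind it (Lemma~\ref{r.uvkm} and the descent from $v_{k,m}$ to $v_{0,k+m}$) directly on stated diagrams $v'_{k,m}, u'_{k,m}$ spanning the sphere, with the Jones--Wenzl boxes replaced by rows of same-sign states on $e_1,e_2$. The paper's observation is that same-sign states already enjoy the only property of $\JWm_{N-1}$ used in Lemma~\ref{r.uvkm}, namely non-returnability, because a returning arc with two equal states is killed by~\eqref{eq.arcs0}. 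At the end of the paper's argument the quantity that must be recognised as a nonzero multiple of $\emptyset$ is $v'_{0,N-1}$, which is just $2(N-1)$ trivial arcs with opposite states, so it is a nonzero monomial in $q^{1/2}$ by one application of~\eqref{eq.arcs0}; no evaluation of $\JWm_{N-1}$ is ever needed.

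The cost of your modular route is the local lemma you flag at the end: that the stated closure $e_i$ of $\JWm_{N-1}$ inside the collar $U_i\cong \PP_1\times(-1,1)$ equals $\lambda_i\cdot\emptyset$ with $\lambda_i\neq 0$. As written, this is asserted, not proved, and it is the entire content of the theorem in your formulation -- if $\lambda_i$ happened to be $0$ you would only conclude $0=0$. The lemma is true and provable with the tools at hand: choose the closure so that the $2(N-1)$ endpoints on $c_i$ are increasingly stated, with the $N-1$ endpoints coming from one side of the box all stated $-$ and those from the other side all stated $+$. Expanding $\JWm_{N-1}$ in the Temperley--Lieb basis, any diagram containing a cup or cap closes to a trivial arc carrying two equal states and vanishes by~\eqref{eq.arcs0} ($\delta_{\epsilon,\bar\nu}=0$ when $\epsilon=\nu$); only the identity diagram survives, with coefficient $1$, and its $N-1$ nested closing arcs each carry opposite states, so each contributes a nonzero factor $C(\pm)$. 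Hence $\lambda_i$ is a nonzero monomial in $q^{\pm 1/2}$. You should include this computation; it is exactly the point where the marked setting must be exploited.

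A small stylistic remark on step (b): rather than arguing that the \emph{proof} of Theorem~\ref{thm.connected3} is a local manipulation and therefore tolerates a disjoint spectator, it is cleaner to note that $e_i\sqcup x_i$ is itself a closure of $\JWm_{N-1}$ (with $x_i$ absorbed into the closing element $\beta$ in the definition of closure), so $(e_1\sqcup x_1)\otimes(e_2\sqcup x_2)\in F_{q^{1/2}}(M_1)\otimes F_{q^{1/2}}(M_2)$ and the statement of Theorem~\ref{thm.connected3} applies directly, giving $(e_1\sqcup x_1)\sqcup(e_2\sqcup x_2)=0$; combined with $e_1\sqcup e_2 = \lambda_1\lambda_2\cdot\emptyset$ applied as a local identity away from $x_1\sqcup x_2$, this yields $\lambda_1\lambda_2\,(x_1\sqcup x_2)=0$ as desired, without having to re-examine the proof of Theorem~\ref{thm.connected3}.
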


The framework of stated skeins allows to properly study glueing and cutting operations on manifolds and interpret them algebraically. This point of view had been explored in the case of stated skein algebras of surfaces in \cite{CL} (see also \cite{KQ}) where, for instance, it had been shown that the quantised coordinate algebra $\OSL$ of $SL_2(\BC)$, see Section \ref{sec.prelim}, is naturally isomorphic to the stated skein algebra of the bigon and as such it co-acts on all stated skein modules of surfaces. 

\subsection{More general marking, gluing along general surfaces}
Cutting along an embedded closed disk is understood via the cutting homomorphism. We want to consider cutting along more general surfaces and include the stated skein module into a topological quantum field theory (TQFT).

To make the theory more fluid, we will extend the stated skein modules to marked manifolds where the marking includes circles, in addition to intervals. For the details see  Section~\ref{sec.prelim}. This setting is new even for surfaces, even though in the presence of a circular marking we loose the algebra structure of stated skein modules of surfaces. However, we can do cutting along circle:
\begin{thm}[See Theorem \ref{teo:cutting} for more precise statement] Suppose $\fS'$ is the result of cutting a marked surface $\fS$ along a circle. There is a naturally defined $\BC$-linear map $\rho: \Sq(\fS) \to \Sq(\fS')$, given by an explicit state sum formula.
\end{thm}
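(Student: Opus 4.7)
The plan is to define $\rho$ directly on representative stated framed tangles by an explicit state sum and then verify that this prescription descends to $\Sq(\fS)$. Given $[T,s] \in \Sq(\fS)$, first isotope $T$ so that it meets the cutting circle $c$ transversely and so that its framing at each intersection point is normal to the surface. Cutting along $c$ produces a tangle $T' \subset \fS'$ whose $n$ intersection points with $c$ split into $n$ pairs of new boundary points $p_i^{\pm}$, one on each of the two circle components of $\partial \fS'$ coming from $c$. Writing $\bove = (\ve_1,\dots,\ve_n)$, we set
\[
\rho([T,s]) \;=\; \sum_{\bove \in \{-,+\}^n} \bigl[\,T',\; s \cup \bove^{(+)} \cup \bove^{(-)}\,\bigr],
\]
where $\bove^{(\pm)}$ denotes the assignment sending $p_i^{\pm}$ to $\ve_i$. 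This mirrors the interval-cutting formula of \cite{Le:TDEC,CL}, but now pairs states across a circular, rather than linear, marking.

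Most of the verification then proceeds along standard lines. Invariance under isotopies that preserve transversality is tautological; invariance under an isotopy pushing an arc of $T$ across a point of $c$ reduces, after fixing a local model in a collar of $c$, to an identity that already underlies the interval case. The Kauffman and stated-boundary relations living away from $c$ are immediately respected, while a Kauffman move performed in a collar of $c$ splits the corresponding crossing into finitely many local diagrams which one checks, term by term after summing over the states on the crossing strands, match via the Kauffman calculus on $\fS'$. These checks are formally the same as the ones carried out for interval cuts, only applied locally on each of the two sides of $c$.

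The substantive issue — the main obstacle — is that, unlike an interval, the circle $c$ carries no natural basepoint, so one must verify that the pairing $\bove^{(+)} \leftrightarrow \bove^{(-)}$ does not secretly depend on a cyclic ordering of the intersection points $p_1,\dots,p_n$. Equivalently, if one slides one intersection point once around $c$, so that the labelling is shifted cyclically, the state sum must be unchanged. We plan to reduce this to a local statement in a tubular annulus $A$ of $c$, where it becomes the cyclic invariance of the partial pairing associated to the $\OSL$-coactions on the two new boundary circles of the cut. This identity will be proved directly, either via the explicit matrix-coefficient formulas for the coaction recorded in \cite{CL}, or by recognising it as the skein-theoretic translation of the fact that the paired sum $\sum_{\ve} x_{\ve}\otimes x_{-\ve}$ appearing in the state sum is preserved under the simultaneous $\OSL$-coaction on both tensor factors and is therefore invariant under cyclic rotation.

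Once these verifications are in place, $\rho$ is well defined and $\BC$-linear, and the formula above is the explicit state sum advertised in the statement. As a sanity check, specialising the construction to the case where $c$ bounds a disk in $\fS$ (so the annulus $A$ lies in a planar neighborhood) recovers, after evaluating the separated loop factor $-q^2-q^{-2}$, the expected behavior predicted by Theorem~\ref{thm.t1}-type locality, giving additional confidence in the formula.
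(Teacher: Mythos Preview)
Your overall approach matches the paper's: define $\rho$ by the state-sum formula on a transversal representative and verify invariance under the local moves near $c$, appealing to the fact that every check is identical to the interval case because it involves only a small arc of $c$. The paper's proof is literally that one observation: the only move to check is the local ``tangency'' move (a strand becoming tangent to $c$ and then crossing or uncrossing), and since that computation already appears in \cite{Le:TDEC} for ideal arcs and uses only a neighbourhood of a single point of $c$, it carries over verbatim to the circle case.

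Where you go astray is in the paragraph you flag as ``the substantive issue --- the main obstacle''. There is no cyclic-ordering obstacle. Written intrinsically, your formula is
\[
\rho([T,s]) \;=\; \sum_{\bove:\, T\cap c \,\to\, \{\pm\}} \bigl[T',\, s\cup \widetilde{\bove}\,\bigr],
\]
a sum over all functions from the \emph{set} $T\cap c$ to $\{\pm\}$; no ordering, cyclic or linear, enters. The pairing between the two new boundary points over each $p\in T\cap c$ is canonical --- they are simply the two points of $\mathrm{pr}^{-1}(p)$ --- so there is nothing to verify about basepoints or rotations. The apparent dependence on a labelling $p_1,\dots,p_n$ is an artefact of your notation $\bove\in\{\pm\}^n$. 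Any isotopy that seems to ``slide an intersection point once around $c$'' is, at the diagram level, a composite of the local tangency move and Reidemeister moves away from $c$, both already handled; the $\OSL$-coaction argument you propose is machinery aimed at a non-problem.

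Finally, your closing sanity check is misdirected. Theorem~\ref{thm.t1} concerns connected sums and closures of Jones--Wenzl idempotents at roots of unity, and has nothing to do with cutting along a circle. When $c$ bounds a disk the paper in fact observes (see Example~\ref{ex:disc} and the remark after Theorem~\ref{teo:cutting}) that $\Cut_c$ is \emph{not} injective, since one component of $\fS'$ is a closed disk with circular marking and $\cS(D^2)\cong \cR/(1+q^2)$; so this case is a cautionary example rather than a confirmation, and no factor of $-q^2-q^{-2}$ is being ``evaluated'' there.
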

Cutting along a circle provides more flexibility in the study of the skein modules of surfaces. 
We present a basis for these skein modules in Theorem \ref{teo:basis} and Proposition \ref{thm.slit2}, extending the previous analogous theorem of \cite{Le:TDEC}.
In particular we recover the Hochshild homology $\rm{HH}_0(\OSL)$ of $\OSL$ as the stated skein module of an annulus with two circular marked components. 

The cutting homomorphism corresponding to cutting along an embedded closed disk can be defined as in the case without circular marking, see Theorem \ref{thm.gluing}.

Besides the cutting operation we introduce the {\em slitting operation}. When cutting a surface $\fS$  along an ideal arc $c$, we get a new surface $\fS'$ having two copies $c_1, c_2$ of $c$ such that by identifying $c_1$ with $c_2$ we get back $\fS$. On the other hand, splitting $\fS$ along a properly embedded arc $d$ (not an ideal arc) means simply to remove $d$ from $\fS$. There is also an operation of slitting along a half-ideal arc, i.e. an arc one endpoint of which is an ideal point and the other endpoint is on the boundary of $\fS$. We describe how the skein modules behave under the slitting operations in Theorems \ref{thm.slit1} and \ref{thm.slit2}. The slitting operations allow even more flexibility in studying skein modules.

 The geometric setting gives additional, natural structures on the stated skein module of a marked three manifold $M$.  Each connected component of the marking   defines a comodule structure on $\SM$ over $\OSL$ or over the coalgebra $\rm{HH}_0(\OSL)$ according as the component is an interval or a circle. 
 
 \def\pSi{\partial \Sigma}
 
The slitting operation can be generalised to the following more general situation.
 Assume $\Sigma$ is a compact surface and $\cP$ a finite set of points in the boundary $\pSi$, where each point  is equipped with a sign $\pm$. The thickened surface $(\Sigma\times (-1,1), \cP\times (-1,1))$ is a marked 3-manifold, and its stated skein module, denoted by $\cS(\Sigma,\cP)$, has a natural structure of an algebra, where the product is defined by the usual stacking operation. Assume $\MN$ is a marked 3-manifold and $\Sigma\embed M$ is a compact, oriented, connected, properly embedded surface which meets $\cN$ transversally. Let $(M', \cN')$ be the result of removing $\Sigma$ from $\MN$. Then there are  natural left and right actions of the algebra $\cS(\Sigma, \Sigma\cap \cN)$ on $\cS(M', \cN')$, making the latter a bimodule over $\cS(\Sigma, \Sigma\cap \cN)$. For a bimodule $V$ over an algebra $A$ one can define the 0-th Hochschild homology by
 $$ HH_0(V) = V/ ( v * a - a *v).$$
 We prove the following: 
\begin{thm}[See Theorem \ref{thm.split3}] The inclusion $(M',\cN')\to (M,\cN)$ induces an isomorphism of $\cR$-modules: 
$${\rm HH}_0(\cSs(M',\cN'))\xrightarrow{\cong} \cSs(M,\cN).$$
\end{thm}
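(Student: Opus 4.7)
Let me write $\Sigma_-, \Sigma_+$ for the two copies of $\Sigma$ in $\partial M'$ produced by the slit, so that a collar of $\Sigma$ in $M$ becomes a pair of collars $\Sigma_\pm\times[0,1)$ in $M'$. The inclusion $(M',\cN')\hookrightarrow (M,\cN)$ induces a natural $\cR$-linear map
$$\iota_*:\cSs(M',\cN')\longrightarrow \cSs(M,\cN).$$
My first step is to check that $\iota_*$ factors through the coequalizer $HH_0(\cSs(M',\cN'))$. For $x\in \cSs(M',\cN')$ and $a\in \cSs(\Sigma,\Sigma\cap\cN)$, the elements $x\cdot a$ and $a\cdot x$ are obtained by inserting $a$ into the collar of $\Sigma_+$ or into the collar of $\Sigma_-$ respectively; after re-gluing to obtain $M$ both skeins lie in a bicollar of $\Sigma$ on opposite sides of it, and a small vertical isotopy through this bicollar carries one to the other inside $M$. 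Thus $\iota_*(x\cdot a)=\iota_*(a\cdot x)$, and one obtains the induced map $\bar\iota:HH_0(\cSs(M',\cN'))\to \cSs(M,\cN)$.

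Surjectivity of $\bar\iota$ is immediate: given any framed, stated tangle $T\subset M$ transverse to $\Sigma$ and to $\cN$, the cut tangle $T'\subset M'$ (with arbitrary but compatible states on the new boundary points) satisfies $\iota_*(T')=T$, using that the Kauffman and stated-skein relations of $M'$ push forward to relations in $M$ and that a sum over compatible boundary states recovers the original skein (the computation being a local identity in $\OSL$, as in the usual cutting homomorphism of Theorem \ref{thm.gluing}).

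For injectivity, I will construct an explicit inverse $\sigma:\cSs(M,\cN)\to HH_0(\cSs(M',\cN'))$. Given a framed stated tangle $T\subset M$, isotope $T$ to be transverse to $\Sigma$ with only finitely many transverse intersection points, choose an ordering of the intersection points along each arc, and define $\sigma(T)$ to be the class of the slit tangle $T'\subset M'$, summed over compatible states on the paired boundary points as in the splitting formula of Theorem \ref{thm.slit1}. Well-definedness on $\cSs(M,\cN)$ amounts to showing that $\sigma$ does not depend on the chosen isotopy. Generic isotopies of transverse tangles in $M$ are decomposable into elementary moves of three types relative to $\Sigma$: (i) a local isotopy supported in the complement of $\Sigma$; (ii) a finger move pushing a strand across $\Sigma$; (iii) a move pushing a strand across a circle of $\Sigma\cap\cN$. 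Case (i) preserves the class of $T'$ in $\cSs(M',\cN')$ outright. Case (ii) replaces a configuration $a\cdot x$ by $x\cdot a$, where $a$ is the local skein inserted into the collar of $\Sigma$; this is precisely the relation collapsed by $HH_0$. Case (iii) is handled by the compatibility of the comodule/bimodule structures with the cutting homomorphism at the marking, which is one of the state-sum identities established in Theorems \ref{thm.slit1}--\ref{thm.slit2}.

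Finally, $\bar\iota\circ\sigma=\mathrm{id}$ holds by construction, since slitting and then re-gluing recovers $T$, while $\sigma\circ\bar\iota=\mathrm{id}$ because any class $[x]\in HH_0(\cSs(M',\cN'))$ can be represented by a tangle $x$ whose boundary points on $\Sigma_\pm$ are in generic position and compatibly stated, so that $\sigma(\iota_*(x))=[x]$. The main obstacle I anticipate is the verification that $\sigma$ is well-defined on skein classes and not merely on generic tangles: this requires a careful bookkeeping of how the Kauffman bracket and boundary relations in $M$ lift to sums of relations in $M'$, and how moving a relation across $\Sigma$ produces exactly a bimodule commutator. The circular part of the marking makes this slightly more delicate than the classical interval case of \cite{Le:TDEC}, but all required local identities have already been stated earlier in the paper, so the verification reduces to checking a finite list of elementary moves.
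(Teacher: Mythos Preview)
Your overall strategy matches the paper's: show that $\iota_*$ descends to $HH_0$, then construct an explicit inverse $\sigma$ and verify well-definedness via a finite list of local moves. However, your definition of $\sigma$ has a genuine gap. When one slits $M$ along $\Sigma$ to obtain $(M',\cN')$, the new boundary copies $\Sigma_\pm\subset\partial M'$ carry \emph{no} marking: by definition $\cN'=\cN\setminus\Sigma$ consists only of the original markings restricted to $M'$. Hence a tangle $T\subset M$ meeting $\Sigma$ transversally is cut into pieces whose new endpoints lie on $\Sigma_\pm$ but not on $\cN'$, so the resulting object is not an $\cN'$-tangle at all, and ``summing over compatible states on the paired boundary points'' has no meaning there. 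This is not the situation of the cutting homomorphism of Theorem~\ref{thm.gluing}, where one cuts along a disk or annulus that \emph{does} carry a marking.

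The paper's fix exploits the hypothesis that $\Sigma$ is a \emph{strict} subsurface, i.e.\ every component of $\Sigma$ meets $\cN$. One chooses a half-ideal arc $\alpha:(0,1]\hookrightarrow\Sigma$ with $\alpha(1)\in\cN\cap\Sigma$ and passing through all points of $T\cap\Sigma$, and then applies the state-sum identity~\eqref{eq.cup2} to push each intersection point off $\Sigma$ and onto the existing marking $\cN$, yielding a genuine element $\tilde j_\alpha(T)\in\cS(M',\cN')$. Invariance under moves M1 and M2 near $\Sigma$ was already established in the proof of Theorem~\ref{thm.slit1}, so those go through verbatim. The $HH_0$ relation enters not through your ``finger move (ii)'' but through checking independence of the choice of $\alpha$: comparing two such arcs $\alpha,\alpha'$ on parallel copies $\Sigma,\Sigma'$, one obtains a common refinement $x\in\cS(\Sl_{\Sigma,\Sigma'}(\MM))$, and the two ways of collapsing back to $\cS(\Sl_\Sigma(\MM))$ differ exactly by the left-versus-right action of $\cS(\boSi)$. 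Your case~(iii) is also confused as written: $\Sigma\cap\cN$ is a finite set of signed points, not a union of circles.
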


As shown in Examples \ref{ex:halfslitrecover} and \ref{ex:cptslitrecover}, this result encompasses multiple previous statements; it also allows one to generalise the ``triangle sum'' of surfaces  studied in \cite{CL} to the case of marked $3$-manifolds and prove in Theorem \ref{thm:3Dtrianglesum} that if $M$ is the triangle sum of $M_1$ and $M_2$ then there is a natural $\BC$-linear isomorphism $\SM\cong \Sq(M_1)\otimes \Sq(M_2)$.

We conclude the paper by defining a category of decorated cobordisms whose objects are {\em marked surfaces} and morphisms are marked $3$-manifolds whose boundary is suitably decomposed into positive, negative and ``side'' parts.
We then show that $\cSs$ can be interpreted as a functor from this category to the Morita category ${\mathsf{ Morita}}$ of algebras and their bimodules. The main result of our TQFT theory is  the following: 
\begin{thm}[See Theorem \ref{thm.tqft}]\label{thm.seven}
The stated skein functor $\cSs:\Dcob\to {\mathsf{ Morita}}$ is a symmetric monoidal functor. 
\end{thm}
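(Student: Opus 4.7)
The plan is to verify the four standard axioms of a symmetric monoidal functor: well-definedness and bimodule structure on morphisms, functoriality under composition, preservation of identities, and compatibility with the disjoint union / tensor product and with the symmetry isomorphisms. The two essential ingredients are the two-sided bimodule structure on $\cSs(M',\cN')$ recalled before Theorem \ref{thm.split3} and the Hochschild-homology isomorphism $HH_0(\cSs(M',\cN'))\cong \cSs(M,\cN)$ of Theorem \ref{thm.split3}.

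First, on objects, to a marked surface $\fS$ we assign the algebra $\cSs(\fS)$ of the thickening $\fS\times(-1,1)$ with its stacking product. Given a decorated cobordism $M:\fS_-\to\fS_+$, the decomposition of $\partial M$ into positive, negative, and side parts, together with collar neighborhoods of the positive and negative parts, provides commuting actions of $\cSs(\fS_+)$ and $\cSs(\fS_-)$ on $\cSs(M)$; this is a special case of the bimodule actions introduced just before Theorem \ref{thm.split3} (the relevant collars are thickenings of $\fS_\pm$ properly embedded just inside $M$). This exhibits $\cSs(M)$ as a morphism in $\mathsf{Morita}$, and invariance under isotopies and diffeomorphisms of decorated cobordisms is automatic from the topological nature of stated skein modules.

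For composition, let $M:\fS_-\to\fS_0$ and $N:\fS_0\to\fS_+$ and set $P=N\circ M$. Inside $P$ sits a properly embedded surface $\Sigma\cong \fS_0$ along which the gluing was performed, and cutting $P$ along $\Sigma$ recovers the disjoint union $M\sqcup N$. Theorem \ref{thm.split3} applied to $\Sigma\subset P$ gives
$$\cSs(P)\;\cong\; HH_0\bigl(\cSs(M\sqcup N)\bigr),$$
where the Hochschild coinvariants are taken with respect to the $\cSs(\fS_0)$-bimodule structure coming from a tubular neighborhood of $\Sigma$. Because $\cSs(M\sqcup N)\cong\cSs(M)\otimes\cSs(N)$ and this tubular neighborhood splits into two half-collars lying respectively in $M$ and $N$, the two commuting actions localise at the $\fS_0$-boundaries of $M$ and $N$. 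Hence the quotient identifies with the balanced tensor product $\cSs(N)\otimes_{\cSs(\fS_0)}\cSs(M)$, which is precisely the composition in $\mathsf{Morita}$. For the identity, the cylinder $\fS\times[0,1]$ viewed as a cobordism $\fS\to\fS$ has stated skein module canonically the regular bimodule $\cSs(\fS)$ over itself, since its bimodule structure unfolds to the defining stacking product. The monoidal axioms reduce to the fact that $\cSs$ sends disjoint unions of marked $3$-manifolds to $\otimes_\BC$; this identification respects the collar actions because the latter are supported near individual boundary components, and sends the swap decorated cobordism to the standard swap of tensor factors, hence is compatible with the symmetric structures on both sides.

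The main obstacle is the composition step: one must show that the bimodule structure used in Theorem \ref{thm.split3} (coming from a tubular neighborhood of $\Sigma\subset P$) coincides, under the canonical splitting of that tubular neighborhood into two half-collars, with the individual right-$\cSs(\fS_0)$-action on $\cSs(N)$ and left-$\cSs(\fS_0)$-action on $\cSs(M)$ dictated by the decorated-cobordism structure. Once the orientation conventions distinguishing positive from negative boundary in $\Dcob$ are calibrated consistently with the left/right conventions in $\mathsf{Morita}$, this is a local check in a collar and follows from the locality of the skein product. Coherence of the monoidal and symmetric structures is then routine from the universal properties of disjoint union and of $\otimes_\BC$.
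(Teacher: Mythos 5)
Your proposal follows essentially the same route as the paper: treat objects and bimodules via the decorated-cobordism boundary decomposition, reduce composition to Theorem \ref{thm.split3} by slitting $P=N\circ M$ along the interface $\Sigma\cong\fS_0$ and identifying $HH_0$ of $\cSs(M\sqcup N)=\cSs(M)\otimes\cSs(N)$ with the balanced tensor product, and then observe that the monoidal and symmetric structures are inherited from disjoint union. One small slip: the ground ring throughout is $\cR$, not $\BC$, so the disjoint-union-to-tensor identification is over $\otimes_\cR$; otherwise the argument matches the paper's.
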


Immediate consequences of Theorems \ref{thm.tqft} and \ref{thm.split3} are a Van Kampen-like theorem for stated skein modules (Theorem \ref{teo:fundgroup}), a description of $\cSs(M)$ given a Heegaard decomposition of $M$ (Theorem \ref{teo:heegaard}) and the fact that $\cSs(\fS\times S^1)={\rm HH}_0(\cSs(\fS))$ (Proposition \ref{prop:HH0}). The computation of $\cSs(M,\cN)$ from a Heegaard decomposition was already obtained in \cite{GJS}. Furthermore the Van Kampen like theorem is very much similar in spirit to Habiro's quantum fundamental group behaviour (\cite{Ha}). 
In \cite{CL3} we will show that the stated skein functor, restricted to a suitable category, is actually a braided monoidal functor with values in suitable Morita-like category of braided comodule algebras over $\OSL$. 

Remark that the TQFT described in Theorem \ref{thm.tqft} is very different from those studied for instance in \cite{RT,BHMV} as the target category is not that of vector spaces. Rather, when working over a field, it seems to fit very well in the general framework of \cite{BJSS} where in particular a semisimple cp-rigid and cocomplete braided monoidal category is shown to be a $3$-dualisable object in the $4$-category ${\bf BrTens}$ so that, as a consequence, there is an extended TQFT associated to it.
We expect that our construction is a special case of this, for the ribbon Hopf algebra of right $\OSL$-comodules at least when $q$ is a generic complex number, although there are some aspects in which our construction is more general in the sense that is allows multiple markings on manifolds and, more importantly, circular ones. 
All these connections are still to be explored as it has been done in \cite{Hai} for the relations between stated skein algebras of surfaces and integrals of ribbon categories over surfaces. 
While completing the present paper we were informed that a result similar to Theorem \ref{thm.seven} was independently proved by J. Korinman and J. Murakami in a forthcoming preprint.

It turns out (see \cite{Hai}) that over a field  stated skein algebras of surfaces are isomorphic to the algebras obtained by ``integrating'' the ribbon category of $\OSL$-comodules over the surface as explained in \cite{BBJ}. Furthermore when the surfaces have only one boundary component with a single marking, these algebras are isomorphic to those defined by Alekseev, Grosse and Schomerus (\cite{AGS}) and by Buffenoir and Roche (\cite{BR}) (see also \cite{Fai,LY1}). For 3-manifolds our stated skein modules are similar but different by nature from the skein category defined eg in  \cite{JF,Cooke}. Our approach is more elementary and geometric in nature, with explicit generators and relations. Moreover, it works over any ground ring, allowing for example to analyse the stated skein modules/algebras at roots of 1, and to find embeddings of stated skein algebras into quantum tori, rings of Laurent polynomials in $q$-commuting variables, see for example \cite{LY2}.

\subsection{Acknowledgement} 
F.C. is grateful to Utah State University in Logan where this work was concluded. He also acknowledges the funding from CIMI Labex ANR 11-LABX-0040 at IMT Toulouse within the program ANR-11-IDEX-0002-02 and from the french ANR Project CATORE ANR-18-CE40-0024. 

 T. L. is partially supported by NSF grant 1811114. 
 
The authors would like to thank David Jordan, Julien Korinman, Adam Sikora and Dominic Weiller for helpful discussions.

\section{Stated skein modules of marked 3-manifolds} \label{sec.prelim}
Throughout the paper let
$\BZ$ be the set of integers, $\BN$ be the set of non-negative integers, $\BC$ be  the set of complex numbers.
The ground ring $\cR$  is a commutative ring with unit 1,  containing a distinguished invertible element $q^{1/2}$. 

The Kronecker delta is defined as usual: \ $ \delta_{x,y} = \begin{cases} 1\quad  &\text{if} \ x=y,\\
  0 &\text{if} \ x\neq y
  \end{cases}
  $
  
 For a finite set $X$ we denote by $|X|$ the number of elements of $X$.

\subsection{Marked 3-manifold} By a {\em open  interval} (respectively {\em circle}) we mean the image of $(0,1)$ (resp. the standard circle $S^1$) through an embedding of $[0,1]$ (resp. of $S^1$)  into a manifold.

\bdf {\em A marked 3-manifold $\MN$} consists of
an oriented 3-manifold $M$  with (possibly empty) boundary $\pM$ and  a 1-dimensional oriented
submanifold $\cN \subset \pM$ such that $\cN$ is the disjoint union of several {open intervals and circles}; we will refer to the intervals as ``boundary edges'' or ``edges'' and to the circles as ``marked circles''.

An embedding of pairs of marked 3-manifolds $i:(M,\cN)\hookrightarrow (M',\cN')$ is an orientation preserving proper embedding $i:M\to M'$ such that $i(\cN)\subset \cN'$ and $i$ preserves the orientation on $\cN$. 
\edf
 A priori two  components of $\cN$ might be mapped by $i$ into the same component of $\cN'$. 
 
 If no component of $\cN$ is a circle, we call $\MN$ a {\em circle-free} marked 3-manifold.

\brem Our notion of a marked 3-manifold is more general than that in \cite{LY1, BL} where only circle-free marked 3-manifolds are considered.
\erem

\begin{definition} Let $\MN$ be a marked 3-manifold. {\em An $\cN$-tangle $L$} (in $M$) is a 1-dimensional, compact,  non-oriented smooth submanifold of $M$ equipped
with a normal vector field
such that $L \cap
\cN = \partial L$  and at a boundary point in $\partial L=L \cap
\cN $, the normal vector is a positive tangent of $\cN$.

Here a {\em normal
vector field} on $L$  is a vector field not tangent to $L$ at any point. 

A loop component of $L$, i.e. a component diffeomorphic to $S^1$, is called a $\cN$-knot, and a non-loop component, which must be  diffeomorphic to $[0,1]$, is called an $\cN$-arc.

Two  $\cN$-tangle are {\em $\cN$-isotopic} if they are
 isotopic through the class of  $\cN$-tangles.
 
 The empty set is also considered a  $\cN$-tangle which is isotopic only to itself.
 
 A state of an $\cN$-tangle $L$  is a map $s:\partial L\to \{\pm\}$. The switching map  $\{ \pm \} \to \{ \pm \} $ is the involution $\epsilon \to \bar \epsilon: =-\epsilon$. The set $\{\pm \}$ is order so that $ -$ is smaller than $+$. A state is \emph{increasing} if while moving along any boundary edge in the positive direction, i.e. the direction induced from the orientatin of the surface, the state function is increasing, i.e. one encounters 
 first a sequence of $-$ and then a sequence of $+$.
\end{definition}
 
It should be noted that while $M,\cN$ are oriented, an $\cN$-tangle is not.

\bdf
The {\em stated  skein module} $\cSs(M,\cN)$ of a marked 3-manifold $\MN$ is the $\cR$-module spanned by isotopy classes of stated $\cN$-tangles in $M$ modulo the following relations:
\begin{align}
\label{eq.skein0} \cross \ &= \ q\resoP + q^{-1} \resoN\\
\label{eq.loop0}  \trivloop\  &=\  (-q^2 -q^{-2})\emptyr\\
\label{eq.arcs0} \leftve\  & =\  \delta_{\epsilon, \bar \nu} C(\epsilon)   \emptys\ , \text{where}\ C(+)=  -q^{-\frac 52} ,  C(-)=  q^{-\frac 12}  \\
 \label{eq.order0}   \reordone\ &=\  q^2 \reordtwo \ +\  q^{-1/2} \reordthree
 \end{align}
 \edf
In the above identities, 
the pictures depict the intersection of an $\cN$-tangle with a box 
$ S \times [-1,1]\embed M$, where $S$ is a square and is identified with $S \times \{0\}$. In this box, the $\cN$-tangle is described by its diagram coming from the standard projection onto $S$, which is the shadowed square. The orientation of $S$  is counterclockwise, and the orientation of $M$ is the given by that of $S$ followed by the orientation of $[-1,1]$, which is pointed to the reader. In Equations 
\eqref{eq.arcs0} and \eqref{eq.order0}, the drawn edge of the square with its orientation is an oriented sub-arc of $\cN$.  Besides, the signs indicate the states of each endpoint of the diagram. In all pictures in this paper, the framing is pointing towards the reader except in small neighbourhood of the boundary edges (the oriented arrows) where the framing twists by $\frac{\pi}{4}$ in order to become positively tangent to $\cN$ (up to isotopy there is only one way to achieve this).  Besides, the signs indicate the states of each endpoint of the diagram.

Identity  \eqref{eq.arcs0} with $\epsilon=+, \nu=-$  is an easy consequence of the other relations, see \cite[Lemma 2.3]{Le:TDEC}, but we add it here for a complete list of values of cups (or trivial arcs).

It is clear that an embedding of pairs $i:\MN \to (M' \cN')$ induces an $R$-linear map $i_*: \cS(M,N) \to \cS(M', \cN')$ which depends only on the isotopy class of $i$.

Easy consequences of the defining relations are the following
\begin{align}
\label{eq.kink} -q^{-3} \kinkp \ &= \ \kinkzero\  = \ -q^3\kinkn\\
\label{eq.arcs1} \leftved\  & =\  - q^{3} \delta_{\bar\epsilon, \nu}  C(\epsilon)   \emptyd
\end{align}

\begin{remark} (1) The convention of diagrams near arrowed edges is different from that in
 \cite{Le:TDEC,CL,LY1}: there the marking is  perpendicular to the page and the framing is vertical everywhere, while here the marking (the arrowed interval) is lying flat on the page. There the arrow indicates the height order, but here the arrow is the orientation of $\cN$. However
  the two presentations are canonically equivalent. Our current presentation  is more suitable for the generalisation to marked three manifolds of the present paper. 
 
 (2) If $\cN$ does not have a circle component then our definition of state skein modules coincide with that in \cite{BL,LY1}.
\end{remark}

\def\inv{{\mathsf{inv}}}
\subsection{Orientation inversion of  components of $\cN$} Recall $C(+)=-q^{-\frac{5}{2}}, C(-)=q^{-\frac{1}{2}}$. 
\begin{proposition}\label{prop:inversion} Let $e$ be a connected component of the marking set $\cN$ of a marked 3-manifold $\MN$. Let $\inv_e(\cN)$ be identical to $\cN$ except that the orientation of $e$ is reversed. 
There is an isomorphism of $\cR$-modules $\inv_e:\cSs(M,\cN)\to \cSs(M,\inv_e(\cN))$ defined on a stated $\cN$-tangle $\alpha$ by:
\beq 
\inv_e(\alpha)=\left(\prod_{u \in \alpha\cap e} C(u)\right) \overline{\alpha} \label{eq.inve}
\eeq
where $\overline{\alpha}$ is obtained from $\alpha$ by switching all the states of $\alpha\cap e$ and changing locally near $e$ the framing of $\alpha$ by adding a positive half twist to each component touching $e$. 
Furthermore applying twice $\inv_e$ gives the identity map $\cSs(M,\cN)\to \cSs(M,\cN)$. 
\end{proposition}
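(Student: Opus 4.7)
The plan is to define $\inv_e$ first at the level of the free $\cR$-module on stated $\cN$-tangles by formula \eqref{eq.inve}, then show it respects the defining relations of $\cSs(M,\cN)$, and finally establish the involutivity claim, which automatically yields bijectivity.

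First I would check that the right-hand side of \eqref{eq.inve} gives a well-defined element of $\cSs(M,\inv_e(\cN))$ up to $\inv_e(\cN)$-isotopy. The point is that reversing the orientation of $e$ forces the framing at each point $u \in \alpha \cap e$ to be rotated by $\pi$ so as to remain positively tangent to $\inv_e(\cN)$; the prescribed positive half-twist near $u$ realises this rotation locally, and any two placements of the half-twist inside a small neighbourhood of $e$ are $\inv_e(\cN)$-isotopic, so $\overline{\alpha}$ is unambiguous.

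Next I would verify compatibility with the four defining skein relations. The crossing relation \eqref{eq.skein0} and the loop relation \eqref{eq.loop0} take place in the interior of $M$, are supported away from $\cN$, and involve no boundary points, so they are preserved term by term because $\overline{\,\cdot\,}$ and the $C$-prefactor only affect the picture near $e$. This leaves the two relations with endpoints on an edge: the cap relation \eqref{eq.arcs0} and the height-reordering relation \eqref{eq.order0}. For \eqref{eq.arcs0}, applying $\inv_e$ to a cap on $e$ with states $(\epsilon,\nu)$ produces the prefactor $C(\epsilon)C(\nu)$ times a cap on $\inv_e(\cN)$ with switched states whose framing at the endpoints has been rotated; after absorbing the twists using \eqref{eq.kink} and evaluating via \eqref{eq.arcs0} and \eqref{eq.arcs1} in $\cSs(M,\inv_e(\cN))$, the product collapses using the key identity $C(+)C(-)=-q^{-3}$ to the image of the right-hand side of \eqref{eq.arcs0} under $\inv_e$. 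For \eqref{eq.order0} the four states are all switched and a half-twist is inserted at each of the four endpoints on $e$; the reversal of the orientation of $e$ interchanges the notion of ``higher'' and ``lower'' along $e$, and I would check that this interchange, together with the local framing contributions, precisely reproduces the same coefficients $q^2$ and $q^{-1/2}$ on the reordered terms.

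Finally I would compute $\inv_e \circ \inv_e$. At each point $u\in \alpha\cap e$, the composition contributes a scalar $C(s(u))\,C(\bar s(u))=-q^{-3}$, while two stacked positive half-twists on the strand at $u$ combine into a single full positive twist, equivalent by \eqref{eq.kink} to a positive kink contributing $-q^3$. The two factors cancel at each endpoint, and the doubly-switched state returns to the original state, so $\inv_e\circ\inv_e=\mathrm{Id}_{\cSs(M,\cN)}$. Hence $\inv_e$ is an $\cR$-linear isomorphism.

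The main obstacle will be step two, specifically the verification for \eqref{eq.order0}: because the reversal of $e$ globally flips the height ordering along the edge while the half-twists act strand-locally, one must juggle framings, ambient orientations and the $q$-power coefficients simultaneously. Once these signs are checked, the involutivity computation in the last step is a short, clean application of \eqref{eq.kink} together with the identity $C(+)C(-)=-q^{-3}$, and the statement follows.
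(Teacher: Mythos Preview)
Your plan is correct and matches the paper's proof: check that $\inv_e$ respects the boundary relations \eqref{eq.arcs0} and \eqref{eq.order0} (the interior relations being automatic), then deduce involutivity from $C(+)C(-)=-q^{-3}$ together with the kink relation \eqref{eq.kink}. One small correction to your expectation for \eqref{eq.order0}: each diagram there has two endpoints on $e$, not four, and the relation does not map to itself with the \emph{same} coefficients term by term---the paper instead computes $\inv_e$ of each side separately and then invokes the auxiliary height-exchange identities (Equations~(20) and~(21) of \cite{Le:TDEC}) to show the two results agree, so the bookkeeping is a bit more involved than a direct coefficient match.
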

\begin{proof}
To show that $\inv_e$ is well defined, we check that Relations \eqref{eq.arcs0} and \eqref{eq.order0} are preserved.

Relation \eqref{eq.arcs0} is preserved, because from the definition and \eqref{eq.arcs1},
$$  \inv_e\left( \leftve\ \right) = C(\epsilon) C(\nu)  \leftvebar =  \delta_{\epsilon, \bar \nu} C(\epsilon)   \inv_e\left( \emptys \right). $$

Consider Relation \eqref{eq.order0}. Apply $\inv_e$ to the left hand side of \eqref{eq.order0},
\beq 
\label{eq.inv2}
\inv_e\left( \reordone\right)=-q^{-3}\reordonepn=
 -q^{-2} \reordtwopn,
 \eeq
 where the last identity follows from \cite[Equ. (20)]{Le:TDEC}. Apply $\inv_e$ to the left hand side of \eqref{eq.order0}, 
 \begin{align}
 \inv_e\left( q^2 \reordtwo \ +\  q^{-1/2} \reordthree\right) & =-q^{-1}\reordnp+\  q^{-1/2} \reordthree \notag \\
 &= 
  -q^{-1}\left(q^{-3}\reordpn+q^{-\frac{3}{2}}(q^2-q^{-2})\reordthree\right)+\  q^{-1/2} \reordthree  \notag \\
  & =-q^{-4}\reordpn+q^{-\frac{9}{2}}\reordthree,
\label{eq.inv3}
 \end{align}
 where the second equality follows from  \cite[Equ. (21)]{Le:TDEC} .
 Comparing \eqref{eq.inv2} and \eqref{eq.inv3}, we see that
Relation \eqref{eq.order0} is transformed into itself.

To prove the last statement observe that the total effect of applying twice $\inv_e$ is to multiply a $\cN$-tangle by $(-q^{-3})^{\# e\cap \alpha}$ and to add a full positive twist to each strand of $\alpha$ near $e$. But each additional framing is equivalent to multiplying $\alpha$ by $-q^{3}$, see \eqref{eq.kink}, so that the different factors compensate. 
\end{proof}
\brem We only need $C(+) C(-)= - q^{-3}$ in the proof.
\erem

\def\id{\mathrm{id}}
\subsection{Manifolds defined up to strict isomorphisms}

We will consider certain geometric operations on 3-manifolds, like cutting and glueing them along disks, or smoothing corners, which produce new manifolds defined up a diffeomorphisms only. Following \cite{LS} we use  the following notion: A {\em strict isomorphism class} of marked 3-manifolds is a family of marked 3-manifolds $(M_i, \cN_i), i \in I$, equipped with diffeomorphisms $f_{ij} : (M_i,\cN_i)\to (M_j,\cN_j)$ for any two indices $i,j$ such that$f_{ii} = \id$ and $f_{jk} \circ f_{ij} = f_{ik}$ up to isotopy. For a strict isomorphism class of marked 3-manifolds we can identify all $\cR$-modules $\cS(M_i, \cN_i)$ via the isomorphisms $(f_{ij})_*$. Note that gluing and cutting operations, or smoothing corner operations produce strict isomorphism classes of marked 3-manifolds.

\def\tal{\tilde \al}
\def\pSi{{ \partial \Sigma}}
\def\ori{{ \mathsf{or}}}

\subsection{Boundary-oriented surface} \label{sec.BOsurface}

\begin{definition}[boundary-oriented surface]\label{def:borderedsurface}

\benu

\item    A boundary-oriented surface is a pair $(\fS, \ori)$ where 

\begin{itemize}
\item $\fS$ is an oriented surface  of finite type, i.e. of the form $\fS =\bfS \setminus P$, where $\bfS$ is a compact surface with possibly empty boundary and $P$ is a finite set, each element of which is called an ideal point of $\fS$,

\item $\ori$ is an orientation of the boundary $\pfS$.

\end{itemize}
 A connected component of $\pfS$ is positive or negative according as the orientation $\ori$ on it is the one induced from the orientation of $\fS$ or not.
 
A non-compact component of the boundary $\pfS$ is called a boundary edge. 

\item A $\pfS$-arc is a proper embedding $[0,1] \embed \fS$.

\item An ideal of $\fS$ is an embedding $(0,1) \embed \fS$ which can be extended to immersion $[0,1] \to \bfS$.

\item a half-ideal arc in $\fS$ is a proper embedding $\al:(0,1] \embed \bfS$ which can be extended to an embedding $\tal: [0,1] \embed \bfS$. Thus $\tal(0)$ is an ideal point while $\al(1)$ is an interior point of a boundary edge.
 
\item An embedding of  boundary-oriented surfaces is a proper orientation preserving embedding which preserves also the orientation of the boundary.

\item The thickening of a boundary-oriented surface $\fS$ is the marked 3-manifold $\MN$, where $M= \fS \times (-1,1)$ and $\cN = \pfS\subset \fS \equiv \fS \times \{0\}$.  Define $\cS(\fS,\ori) = \cS\MN$.

\eenu
\end{definition}

When it is clear from context we write $\fS$ instead of $(\fS,\ori)$. The orientation inversion map $\inv_e$ given by Proposition \ref{prop:inversion} shows that as $R$-modules $\cS(\fS, \ori)\cong \cS(\fS, \ori_+)$, where $\ori_+$ is the positive orientation of $\pfS$.

The projection  $ \fS \times (-1,1)\to \fS$ allows to consider diagrams of $\cN$-tangles.

\bdf
 A $\pfS$-tangle diagram $D$ is a generic immersion of a compact non-oriented 1-dimensional manifold  into $\fS$ in which every double point  is endowed with the under/overcrossing information of the two involved strands. Isotopies of $\pfS$-tangle diagrams  are ambient isotopies of $\fS$. 
\edf

Note that ``generic immersion'' implies $D$ meets $\pfS$ transversally and $D$ has only a finite number of singularity, each is a double point lying in the interior of $\fS$. The empty set is considered as a $\pfS$-tangle diagram.

A $\pfS$-tangle diagram $D$ defines a $\cN$-isotopy class of $\cN$-tangle: equip $D$ with the vertical framing everywhere, except near $\pfS$ one turns the framing by $\pi/4$ to match the orientation $\ori$ of $\pfS$. A  $\pfS$-tangle diagram is 
{\em stated} if it is equipped with a state, which is a map $\partial D \to \{ \pm \}$. A state of $D$ is {\em increasing} if for each boundary edge $c$ of $\partial \fS$, the states on $e$  are increasing when traveling in the direction of $\ori$.

A stated $\pfS$-tangle diagram defines an element of $\cS(\fS,\ori)$. Every $\cN$-isotopy class of  stated $\cN$-tangles can be represented by stated $\pfS$-tangle diagrams. Note that if $D$ is a stated $\pfS$-tangle diagram representing an element $x\in \cS(\fS,\ori)$ and $e$ is a boundary edge of $\fS$, then $D$, up to a power of $q^{1/2}$,  also represents the element $\inv_e(x)\in \cS(\fS,\inv_e(\ori))$, where $\inv_e(\ori)$ is the same as $\ori$ except the orientation of $e$ is reversed.

When $\pfS$ does not have a circle component, we call $\fS$ {\em circle free}. In that case
 $\cS(\fS,\ori)$ has an algebra structure defined in Section \ref{sec:comodule}.

If $\fS$ is circle-free and $\ori=\ori_+$ then $\fS$ is known as a punctured bordered surface in \cite{Le:TDEC,CL,LY1,LY2} and $\cS(\fS,\ori)$  is studied intensively there.

\subsection{Half-ideal slit of surface} \label{sec.slit1}
A $\pfS$-tangle diagram is {\em simple} if  it has neither double points nor trivial components. Here a component is trivial if it is a circle bounding a disk in $\fS$ or it is an arc homotopic relative its endpoints to a subset of $\pfS$. 
 By  \cite[Theorem 2.8]{Le:TDEC} if $\fS$ is circle-free then  $\cS(\fS,\ori)$ is free over $R$ with basis the set  $\cB(\fS,\ori)$ of all isotopy classes of increasingly stated simple diagrams. We want to consider the case when $\pfS$ has a circle component. It turns out that when $\fS$ is non-compact and connected we can show that $\cS(\fS,\ori)$ is free over $R$ and find a free basis of it by eliminating the circles.

 Assume  $\al$ is a half-ideal arc connecting an ideal point $p$ and a point of a boundary component $c\subset \pfS$. Note that $p$ can be an interior ideal point or a boundary ideal point, and $c$ can be a boundary edge or a boundary circle. The {\em $\al$-slit} of $(\fS,\ori)$ is the boundary oriented surface  $(\fS', \ori')$ where  $\fS':=\fS\setminus \alpha$ and  $\ori'$  is the  restriction of $\ori$, see Figure \ref{fig:slit}. The whole interval $\al$ is an ideal point of $\fS'$. We also call $(\fS', \ori')$ {\em a half-ideal slit of $(\fS,\ori)$ breaking $c$}, when we don't want to mention $\al$.
 In $\fS'$ the remnant of $c$ is never a circle.

 \FIGceps{slit}{A half-ideal slit breaking $c$, with an interior ideal point.}{2cm}
 \no{ \tiny
Locally around each $\alpha_i$ we then have the following: 
\begin{center}
\raisebox{-0.7cm}{\includegraphics[width=2cm]{relcirclemarking.pdf}}\put(-30,15){$\fS$}$\to $\raisebox{-0.7cm}{\includegraphics[width=2cm]{relcirclemarking2.pdf}}\put(-30,15){$\fS'$}
\end{center}
where thick oriented edge represents a circle component of $\pfS$  and the thin edge represents  xxx.
}
 
  \begin{theorem}\label{thm.slit1}   Assume
  $(\fS', \ori')$ is the $\al$-slit of a boundary-oriented surface $(\fS,\ori)$, where $\al$ is a half-ideal arc.
The natural embedding $\iota: (\fS', \ori') \embed (\fS, \ori)$ induces an $R$-linear isomorphism $\iota_*: \cS(\fS', \ori') \cong\cS(\fS, \ori)$.
\end{theorem}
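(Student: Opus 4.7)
The plan is to construct an explicit $R$-linear inverse $\phi \colon \cS(\fS,\ori) \to \cS(\fS',\ori')$ to $\iota_*$. The key geometric input is that one endpoint $p$ of $\al$ is an ideal point of $\fS$, so in the thickening $M = \fS \times (-1,1)$ the whole line $\{p\} \times (-1,1)$ is absent. Consequently the $2$-dimensional surface $\al \times (-1,1) \subset M$ has one ``free end'' running off into this missing line, and any transverse intersection of a tangle with $\al \times (-1,1)$ can be slid along $\al$ toward $p$ and pushed off through that end. In a local punctured-disk neighborhood $D^* \ni p$, the complement $(D^* \setminus \al) \times (-1,1)$ is simply connected, which guarantees that the push-off is unobstructed and does not re-create intersections.

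First, I would show that any stated $\pfS$-tangle $T$ in $\fS$ is isotopic within $M$ to a tangle $T'$ contained in $M' = \fS' \times (-1,1)$: isotope the boundary points of $T$ along $\pfS$ to avoid $q = \al \cap \pfS$, put $T$ in general position with $\al \times (-1,1)$ so that the intersection is a finite set, and eliminate these points one at a time by the local push-off just described. Set $\phi([T]) := [T'] \in \cS(\fS',\ori')$. The isotopy $T \simeq T'$ inside $M$ then already gives surjectivity of $\iota_*$.

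Second, I would verify that $\phi$ is well defined and respects the defining skein relations. If $T'_1$ and $T'_2$ are two push-offs of the same $T$, an isotopy in $M$ between them traces out a $2$-dimensional family whose generic intersection with $\al \times (-1,1)$ is a $1$-manifold; each of its components can itself be swept off the $p$-end by the same argument, producing an isotopy between $T'_1$ and $T'_2$ inside $M'$, so $[T'_1] = [T'_2]$. Each defining relation \eqref{eq.skein0}--\eqref{eq.order0} is supported in a small coordinate box $B \subset M$; after a preliminary push-off we may assume $B \subset M'$, so the relation descends to $\cS(\fS',\ori')$. Finally $\iota_* \circ \phi = \mathrm{id}$ because $T \simeq T'$ in $M$, and $\phi \circ \iota_* = \mathrm{id}$ because a $\pfS'$-tangle already lies in $M'$ and requires no push-off.

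The main technical obstacle I expect is this well-definedness step: the push-off of a $1$-parameter family of tangles must be carried out continuously in the parameter, keeping track of framings and of the stated boundary points near $q$. This requires a careful transversality/general-position argument on the $2$-dimensional trace of the isotopy, together with the crucial local simple-connectivity fact near $p$. The same strategy would fail for a properly embedded arc with both endpoints on $\pfS$ (i.e.\ with no ideal endpoint to provide a free end), which is consistent with the more subtle behaviour of the compact-arc splitting operations treated in Theorem~\ref{thm.slit2}.
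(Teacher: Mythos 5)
Your proposal takes a genuinely different, purely geometric route from the paper. The paper defines the inverse $f$ by the explicit state-sum formula \eqref{eq.cup2}, which does not push a strand off $\al$ but rather replaces each transverse intersection of a diagram with $\al$ by a sum over two states of arcs terminating on the boundary component $c$ near $q = \al \cap c$, weighted by $C(\epsilon)^{-1}$; well-definedness is then a finite check of invariance under the two local moves M1 and M2, carried out purely with the relations \eqref{eq.arcs0} and the reformulation \eqref{eq.cup1} of \eqref{eq.order0}. Your surjectivity argument (isotope $T$ off $\al \times (-1,1)$ around the free end at $p$) is fine and is even more elementary than the paper's. But your well-definedness argument has a genuine gap.

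The problem is the claim that an isotopy in $M$ between two push-offs $T'_1, T'_2 \subset M'$ can itself be swept off $\al$ to produce an \emph{isotopy} between them inside $M'$. This is false in general, because of the boundary behaviour near $q$. The endpoints of a stated $\cN$-tangle live on the $1$-manifold $\cN$, so the cyclic (or linear) order of $\partial T$ on each component of $\cN$ is an isotopy invariant. If $c$ is a circle boundary component of $\fS$, then an $\cN$-isotopy in $M$ is allowed to slide an endpoint of $T$ past the marked point $q$, which preserves the cyclic order on $c$. But in $\fS'$ the circle $c$ is broken open at $q$ into a boundary edge, where the relevant invariant is the \emph{linear} order; sliding an endpoint from one side of $q$ to the other changes that linear order. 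So one obtains $T'_1$ and $T'_2$ with $\iota_*(T'_1)$ $\cN$-isotopic to $\iota_*(T'_2)$ in $M$, yet $T'_1$ and $T'_2$ are \emph{not} $\cN$-isotopic in $M'$. They are equal in $\cS(\fS',\ori')$, but only after applying the reordering relation \eqref{eq.order0} and the cup values \eqref{eq.arcs0}, which your purely topological argument never invokes. Because the intended $\phi$ depends a priori on the choice of representative, and different representatives can produce push-offs in different isotopy classes in $M'$, the map is not well-defined by isotopy alone; the skein relations near $q$ are essential, and they are exactly what the paper's moves M1, M2 encode. Proving that $[T'_1] = [T'_2]$ using those relations is not a bookkeeping detail: it is the content of the theorem, and it is precisely what the paper's explicit formula \eqref{eq.cup2} is designed to control.
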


\def\id{{\mathrm{id}}}
\def\slitone{  \raisebox{-17pt}{\incleps{1.5 cm}{slit1}} }
\def\slitonea{  \raisebox{-17pt}{\incleps{1.5 cm}{slit1a}} }
\def\slitoneb{  \raisebox{-17pt}{\incleps{1.5 cm}{slit1b}} }
\def\slitonec{  \raisebox{-17pt}{\incleps{1.5 cm}{slit1c}} }
\def\slittwo{  \raisebox{-17pt}{\incleps{1.5 cm}{slit2}} }
\def\slittwoa{  \raisebox{-17pt}{\incleps{1.5 cm}{slit2a}} }
\def\slitthreea{  \raisebox{-17pt}{\incleps{1.5 cm}{slit3a}} }
\def\slitthreeaa{  \raisebox{-17pt}{\incleps{1.5 cm}{slit3aa}} }
\def\slitthreeab{  \raisebox{-17pt}{\incleps{1.5 cm}{slit3ab}} }
\def\slitthreeac{  \raisebox{-17pt}{\incleps{1.5 cm}{slit3ac}} }
\def\slitthreead{  \raisebox{-17pt}{\incleps{1.5 cm}{slit3ad}} }
\def\slitthreeb{  \raisebox{-17pt}{\incleps{1.5 cm}{slit3b}} }
\def\slitthreeba{  \raisebox{-17pt}{\incleps{1.5 cm}{slit3ba}} }
\def\slitthreebb{  \raisebox{-17pt}{\incleps{1.5 cm}{slit3bb}} }
\def\slitthreebc{  \raisebox{-17pt}{\incleps{1.5 cm}{slit3bc}} }
\def\slitthreebd{  \raisebox{-17pt}{\incleps{1.5 cm}{slit3bd}} }
\def\slitthreebe{  \raisebox{-17pt}{\incleps{1.5 cm}{slit3be}} }

\begin{proof} Using the isomorphism $\inv_e$ we can assume that $\ori$ is the positive orientation.

 Relation~\eqref{eq.order0} can be rewritten as
\beq
\slitone = q^{1/2} \slitoneb - q^{5/2} \slitonea = \sum_{\epsilon = \pm} C(\epsilon)^{-1}\slitonec 
\label{eq.cup1}
\eeq 
which shows that any 
 $x\in\cSs(\fS, \ori)$  is a linear combination of stated $\pfS$-tangle diagrams not meeting $\al$. Hence the map $\iota_*$ is surjective. We will construct an inverse of $\iota_*$.

{\em Claim.} $\cS(\fS,\ori)$ is the free span of isotopy classes of stated $\pfS$-tangle diagrams modulo the four relations \eqref{eq.skein0}--\eqref{eq.order0}. In fact, isotopy classes of stated $\pfS$-tangles are given by isotopy classed of stated $\pfS$-tangle diagrams modulo the Reidemeister moves of type II and type III defined in \cite{Lickorish}. Thus $\cS(\fS,\ori)$ is the
free span of isotopy classes of stated $\pfS$-tangle diagrams modulo the four relations \eqref{eq.skein0}--\eqref{eq.order0} and the Reidemeister moves of type II and type III.
By \cite[Lemma 3.3]{Lickorish} a Reidemeister move of type II or III can be realised by Relations \eqref{eq.skein0} and \eqref{eq.loop0}. Hence we have the claim.

For a concrete stated $\pfS$-tangle diagram $D$  intersecting $\al$ transversally in $k$ points define $f(D) \in \cS(\fS', \ori')$ by repeatedly applying identity \eqref{eq.cup1}:
\beq
f\left ( \slittwo \right): = \sum_{\epsilon_i \in \{ \pm \}} \prod C(\epsilon_i)^{-1}  \slittwoa.
\label{eq.cup2}
\eeq
Here ``concrete'' simply means to we don't identify $D$ with its isotopy class in $\fS$. Let us show that $f$  depends only on the isotopy class of $D$. It is enough to show that $f$ is invariant under the moves M1 and M2 given in Figure \ref{fig:slit3}.

\FIGceps{slit3}{Moves M1 and M2 for isotopy of $D$}{1.5cm}

 Consider move M1. Using  \eqref{eq.cup2} and the values of the cups given by \eqref{eq.arcs0},  
 $$ f\left ( \slitthreea \right) =  q^{\frac12} \slitthreeab - q^{\frac 52} \slitthreeaa  =  \slitthreeac= f\left ( \slitthreead \right).  $$

Consider move M2. Using \eqref{eq.cup2} then move M1, and then \eqref{eq.cup1}, we have
\begin{align*}
 f\left ( \slitthreeb \right) & = q^{\frac12} f\left ( \slitthreeba \right)- q^{\frac52} f\left ( \slitthreebb \right) \\ & = q^{\frac12} f\left ( \slitthreebc \right)- q^{\frac52} f\left ( \slitthreebd \right)
 = f\left ( \slitthreebe \right).
\end{align*} 
More in general if in $D$ there are some vertical strands between the cup shaped strand and $c$ then we first apply \eqref{eq.cup2} to these strands to reduce to the previous case pictured above. 

Thus $f$ is a well-define $R$-linear map. From the  definition $f \circ \iota_*=\id$. It follows that $\iota_*$ is injective, whence bijective.
\end{proof}

When $\fS$ is non-compact and connected, for a circle boundary component $c$ there is a half-ideal arc $\al$  with endpoint in $c$, and the $\al$-slit of $\fS$ is still connected. Hence we have
 \bcor[Basis for the stated skein module of a non-compact surface]\label{teo:basis} Assume a connected non-compact boundary-oriented surface $(\fS,\ori)$ has $k$ circle boundary components. 
After $k$ half-ideal slits breaking  all circle components of $\pfS$ we get a circle-free boundary-oriented surface $(\fS', \ori')$. The embedding $\iota:(\fS', \ori') \embed (\fS, \ori)$ induces an $R$-linear isomorphism $\iota_*:\cS(\fS', \ori') \cong\cS(\fS, \ori)$. In particular $\cS(\fS,\ori)$ is a free $R$-module with basis  $\iota_*(\cB(\fS', \ori'))$. 
\ecor

\brem (1) If the endpoint of the half-ideal arc $\al$ is in a boundary edge $e$, then Theorem~\ref{thm.slit1} is not quite new: it is a reformulation  of a fact proved in \cite[Theorem 4.17]{CL} stating that gluing over a triangle induces isomorphism of stated skein modules. The proof presented here is new even for this special case.

(2) Note that in general the slit isomorphism is not an algebra homomorphism, in case when $\cS(\fS,\ori)$ has the algebra structure, i.e. when $\fS$ is circle-free.
\erem

\subsection{Compact slit}
Corollary \ref{teo:basis} provides a free basis of the $\cR$-module $\cSs(\fS)$ under the hypothesis that $\fS$ is non compact and connected. We will show that when $\fS$ is compact $\cSs(\fS,\ori)$ is a nice  quotient of $\cSs(\fS',\ori')$, where $\fS'$ is non-compact. Besides, Example \ref{ex:disc} will show that in general $\cSs(\fS,\ori)$ is not free over $\cR$, unlike the case when $\fS$ is non-compact and connected.
\bthm \label{thm.slit2} Suppose $(\fS,\ori)$ is a  boundary-oriented surface and $\al$ is a $\pfS$-arc. Let  $\fS'=\fS\setminus \alpha$ and $\ori'$ be the restriction of $\ori$, see Figure \ref{fig:slit4}.
Then $\cSs(\fS,\ori)=\cSs(\fS',\ori')/\sim $ where $\sim$ is the equivalence relation given in Figure \ref{fig:slit4}.  
\ethm

\FIGceps{slit4}{Left: slitting along a properly embedding arc. Right: the equivalence relation. 
All  circular bold arcs  might be in the same  component of~$\pfS$.}{1.5cm}

\def\slitfivea{  \raisebox{-17pt}{\incleps{1.5 cm}{slit5a}} }
\def\slitfiveb{  \raisebox{-17pt}{\incleps{1.5 cm}{slit5b}} }
\def\slitfivec{  \raisebox{-17pt}{\incleps{1.5 cm}{slit5c}} }
\def\slitfived{  \raisebox{-17pt}{\incleps{1.5 cm}{slit5d}} }
\def\slitfivee{  \raisebox{-17pt}{\incleps{1.5 cm}{slit5e}} }
\def\slitfivef{  \raisebox{-17pt}{\incleps{1.5 cm}{slit5f}} }

\begin{proof} The proof is similar and uses many ingredients of the proof of Theorem \ref{thm.slit1}.
Again using $\inv_e$ we can assume that the orientation $\ori$ is positive.

The map induced from the embedding $ (\fS', \ori') \embed (\fS, \ori)$ clearly descends to an $R$-linear map $\pi:\cSs(\fS',\ori')/\sim\to \cSs(\fS,\ori)$. Identity \eqref{eq.cup1} shows that $\pi$ is surjective. We will define an inverse of it. Orient $\al$, for example, assuming its direction is pointing to the right  in Figure \ref{fig:slit4}. 

Let $D$ be a concrete stated  $\pfS$-tangle diagram. Define $f(D)$ by exactly the same formula \eqref{eq.cup2}, except now the values should be in $\cSs(\fS',\ori')/\sim$. Note that in defining $f$ we use the right circular arc (determined by the direction of $\al$), not the left one. 

Two stated  $\pfS$-tangle diagrams give isotopic stated  $\pfS$-tangles if and only if they are related by  moves M1, M2, and in addition move M3:
$$ \incleps{1.5cm}{slit5}$$
From the proof of Theorem \ref{thm.slit1}, we know $f$ is invariant under moves M1 and M2, even without the relation $\sim$. For move M3 we will need relation $\sim$. Using the definition of $f$ then relation $\sim$,
\begin{align*}
 f\left ( \slitfivea \right)&= q^{\frac12} f\left ( \slitfivec \right)- q^{\frac52} f\left ( \slitfiveb \right)\\
 &= q^{\frac12} f\left ( \slitfived \right)- q^{\frac52} f\left ( \slitfivee \right)= f\left ( \slitfivef \right) 
\end{align*}
Thus $f$ gives a well-defined map  $\cS(\fS, \ori) \to \cSs(\fS',\ori')/\sim$ which is a left inverse of $\pi$. It follows that $\pi$ is injective, whence bijective.
\end{proof}

\subsection{Examples, torsion in  case of compact surfaces} 

\def\PP{{\mathbb P}}
An $n$-gon $\PP_n$ is the standard closed disk with $n$ points on its boundary removed.
 
 Let $\PP_{n,k}$ be  obtained from $\PP_n$ be removing $k$ interior points. 
In particular $\PP_{0,k}$ is the closed disk with $k$ interior points removed. In this subsection we consider $\PP_{n,k}$ as a boundary-oriented surface, where the orientation of the boundary is positive.

In \cite{Le:TDEC} it is proved that $\cS(\PP_{1})\cong R$ via the map whose inverse is $r \to r \emptyset$. \\
We proved in \cite{CL}  that $\cS(\PP_2)$ has a natural structure of a Hopf algebra, and as  Hopf algebras it is isomorphic  to the quantised coordinate algebra $\OSL$ of the group $SL_2(R)$. See also~\cite{KQ}.

By Theorem \ref{teo:basis}, using a half-ideal slit on $\PP_{n,k+1}$
  we get an $R$-linear isomorphism 
 \beq \cS(\PP_{n,k+1} ) \cong \cS(\PP_{n+1,k}).
 \eeq
 In particular we have the following $\cR$-linear isomorphisms:
 $$ \cS(\PP_{0,1})\cong \cS(\PP_{1,0} ) \cong R, \quad \cS(\PP_{1,1})\cong \cS(\PP_{2}) \cong \OSL.$$

If $S_{2,1}$ is the result of removing two small open disks and one point from the sphere, with positive orientation, then by using two half-ideal slits we get $\cR$-linear isomorphism
\beq  \cSs(S_{2,1})=\cSs(\PP_2)=\OSL.
\eeq

\begin{example}[Non-trivial torsion] \label{ex:disc}
Let $\fS=D^2$ be the closed disk with positive boundary orientation. Then by applying Theorem \ref{thm.slit2}, slitting along a diameter, we get that $\cSs(\fS)$ isomorphic to $\cSs(\PP_1)\otimes \cSs(\PP_1)/\sim$ where $\sim$ is defined in Figure \ref{fig:slit4}.
In $\PP_1$ the top red arc in the figure is equivalent to $0$ if its states are equal, to $q^{-1/2}$ if the left state is $+$ and the right one $-$ and finally to $-q^{-5/2}$ in the remaining case. 
The bottom red arc is instead equivalent to $0$ if its states are equal, to $-q^{-5/2}$ if the left state is $+$ and the right one $-$ and finally to $q^{-1/2}$ in the remaining case. 
Therefore we get $\cSs(\fS)=\cR/(q^{-1/2}+q^{-5/2})=\cR/(1+q^{2})$.

\end{example}

\subsection{Circle boundary element}

\begin{lemma}\label{lem:circularskeins} Suppose $c$ is a circle boundary component of an oriented surface $\fS$. Assume a stated $\pfS$-tangle diagram $\al$ is the disjoint union $\al= \al_1\sqcup \al_2$ where $\al_1$  is a simple closed curve parallel to $c$. Then 
as elements of $\cS(\fS)$ we have $\alpha=2\al_2\in \cSs(\fS)$. 
\end{lemma}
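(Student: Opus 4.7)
The plan is to reduce the identity to a local cup evaluation via the half-ideal slit isomorphism of Theorem~\ref{thm.slit1}. First, if $\fS$ has no ideal point near $c$, we introduce an auxiliary one: choose an interior point $p \in \fS$ disjoint from $\alpha$ and set $\tilde\fS := \fS \setminus \{p\}$, treating $p$ as a new ideal point. The inclusion $\tilde\fS \hookrightarrow \fS$ induces an $\cR$-linear map of stated skein modules sending $\alpha$ to $\alpha$, so it suffices to prove the identity in $\cS(\tilde\fS)$.

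Next, pick a half-ideal arc $\beta$ from $p$ to a point of $c$ that meets $\alpha_1$ transversally in exactly one point and is disjoint from $\alpha_2$. Such a $\beta$ exists because $\alpha_1$ is parallel to $c$, so $\alpha_1$ and $c$ cobound a thin annular collar and any arc from $p$ into that collar crosses $\alpha_1$. Apply the slit isomorphism $\iota_* : \cS(\fS') \xrightarrow{\cong} \cS(\tilde\fS)$, where $\fS' = \tilde\fS \setminus \beta$, together with its inverse $f$ from the proof of Theorem~\ref{thm.slit1}. The resolution \eqref{eq.cup1} applied at the single point $\alpha_1 \cap \beta$ gives
$$f(\alpha) \;=\; \sum_{\epsilon \in \{\pm\}} C(\epsilon)^{-1}\, \widetilde{\alpha}_1(\epsilon) \,\sqcup\, \alpha_2,$$
where $\widetilde{\alpha}_1(\epsilon)$ denotes the slit of $\alpha_1$ equipped with the pair of cup-matching states on its two new endpoints prescribed by \eqref{eq.cup2}.

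The key geometric observation is that $\widetilde{\alpha}_1(\epsilon)$ is isotopic in $\fS'$ to a small trivial cup abutting the end of the slit. Indeed, the annular collar of $c$ containing $\alpha_1$ becomes a disk after the radial cut by $\beta$, and inside this disk $\widetilde{\alpha}_1(\epsilon)$ is a chord joining the two sides of $\beta$ that can be straightened to a tiny cup at the ideal endpoint of $\beta$. Applying \eqref{eq.arcs0} in a neighborhood of this cup (disjoint from $\alpha_2$) yields $\widetilde{\alpha}_1(\epsilon) \sqcup \alpha_2 = C(\epsilon)\, \alpha_2$, and hence
$$f(\alpha) \;=\; \sum_{\epsilon} C(\epsilon)^{-1}\, C(\epsilon)\, \alpha_2 \;=\; 2\alpha_2.$$
Applying $\iota_*$ then gives $\alpha = 2\alpha_2$ in $\cS(\tilde\fS)$, which descends to the same identity in $\cS(\fS)$.

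The main obstacle is verifying the isotopy of $\widetilde{\alpha}_1(\epsilon)$ to a local cup: the ``parallel to $c$'' hypothesis is precisely what forces the slit arc to take the short route close to $\beta$'s ideal endpoint rather than winding around the remnant of $c$. Correctly identifying the states assigned by the resolution to the two new endpoints of the slit also requires care, but reads off directly from \eqref{eq.cup1}; the factor $2 = \sum_\epsilon 1$ arises from the perfect cancellation between the resolution coefficients $C(\epsilon)^{-1}$ and the cup values $C(\epsilon)$.
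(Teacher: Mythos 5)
Your argument is correct, and the final numerical cancellation $\sum_\epsilon C(\epsilon)^{-1}C(\epsilon)=2$ is exactly the one that drives the paper's proof. However, your route is noticeably more elaborate than the paper's. The paper treats \eqref{eq.cup1} purely as a \emph{local} identity in $\cS(\fS)$ (it is just a rewriting of the defining relation \eqref{eq.order0}): one applies it once to the segment of $\al_1$ sitting in a small box near $c$, observes that the resulting arc with both endpoints on $c$ is a trivial cup (this is where the "parallel to $c$" hypothesis enters), evaluates the cup with \eqref{eq.arcs0}, and reads off the factor $2$. No cutting of $\fS$ is involved. By contrast, you invoke the slit isomorphism of Theorem~\ref{thm.slit1}, which requires a half-ideal arc and hence an ideal point; when $\fS$ has none you must first delete an interior point $p$ and rely on the (true, standard, but not established in the paper) fact that puncturing the interior does not change the stated skein module. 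You then pass to $\cS(\fS')$ via $f$, evaluate there, and return via $\iota_*$. All of this is correct, and it does make the geometry of "$\al_1$ becomes a cup after the cut" very explicit, but it imports machinery (the auxiliary ideal point, and Theorem~\ref{thm.slit1} as a global statement) that the paper's one-line local computation avoids. In short: same underlying skein computation, but the paper performs it in place, whereas you detour through the slit surface.
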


\def\AA{{\mathbb A}}
\def\slitten{  \raisebox{-14pt}{\incleps{1.3 cm}{slit10}} }
\def\slittena{  \raisebox{-14pt}{\incleps{1.3 cm}{slit10a}} }
\def\slittenb{  \raisebox{-14pt}{\incleps{1.3 cm}{slit10b}} }
\def\slittenc{  \raisebox{-14pt}{\incleps{1.3 cm}{slit10c}} }
\begin{proof} Using \eqref{eq.cup1}, we have
$$ \slitten = q^{\frac 12}\slittena -  q^{\frac 52}\slittenb = 2\,  \slittenc $$
 where in the second equality we used the values of cups given by \eqref{eq.arcs0}.
\end{proof}

\def\ofS{\mathring {\fS}}

  \def\dec{\mathrm{dec}}
  
  \def\ufS{\underline{\fS}} 
\subsection{Cutting homomorphism} Two boundary components of a boundary-oriented surface  are {\em of the same type} if they are both circles or both boundary edges.
Let  $c_1, c_2$ be two boundary components of the same type of a boundary-oriented surface $(\fS',\ori')$.  Assume  $c_1$ and $c_2$ have opposite orientations, i.e. one positive and one negative. Let $\fS= \fS'/(c_1=c_2)$ where we identify $c_1$ with $c_2$ via an orientation preserving diffeomorphism. Let $\ori$ be the orientation of $\pfS$ which is induced from $\ori'$, and $\pr: \fS' \to \fS$ be the natural projection. 
Denote $c= \pr(c_1)= \pr(c_2)$. If $c_1, c_2$ are boundary edges then $c$ is an oriented ideal arc of $\fS$, otherwise  $c$ is a an oriented simple closed curve in the interior of $\fS$.

In this situation we also say that $(\fS',\ori')$ is a result of cutting $(\fS,\ori)$ along $c$. 
 For an example see Figure \ref{fig:cut}, where we also give an idea of how the map $\Cut_c$ is defined.  

\FIGceps{cut}{Left: Cutting along an interior ideal arc $c$. Right: The map $\Cut_c$. The case when $c$ is a circle is similar.}{2.3cm}

Suppose $\al$ is a stated $\pfS$-tangle diagram transversal to $c$. Then  $\tal:=\pr^{-1}(\al)$ is a $\pfS'$-tangle diagram which inherits states from $\al$ at all boundary points, except for those in $c_1 \cup c_2$. For every $\bove: \al \cap c \to \{\pm\}$ let $\tal(\bove)$ be the stated $\pfS'$-tangle diagram whose states on $c_1 \cup c_2$ are the lift of $\bove$, i.e. the state at both points in $\pr^{-1}(u)$ is $\bove(u)$.

Here is an extension of \cite[Theorem 1]{Le:TDEC}, where the case $c$ is an ideal arc is proved.
\begin{theorem}\label{teo:cutting} Assume $(\fS',\ori')$ is a result of cutting $(\fS,\ori)$ along $c$ as above, where $c$ is either an interior oriented ideal arc or an interior oriented simple loop.

 There exists a unique $\cR$-linear homomorphism $\Cut_c :\cSs(\fS,\ori) \to \cSs(\fS',\ori')$ such that if $\al$ is a stated 
 $\pfS$-tangle diagram transversal to $c$ then
 \beq 
  \Cut_c(\al) = \sum_{\bove: \al \cap c \to \{ \pm \}} \tal(\bove) \in \cS(\fS').
  \label{eq.cut0}
  \eeq

\end{theorem}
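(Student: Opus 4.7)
The plan is to first reduce to the case where $\ori$ is the positive orientation via the inversion isomorphism $\inv_e$ of Proposition \ref{prop:inversion}, which conjugates the state-sum formula \eqref{eq.cut0} on both sides by multiplicative constants at the boundary. Uniqueness is then immediate: every element of $\cS(\fS,\ori)$ is a linear combination of stated $\pfS$-tangle diagrams, any such diagram can be isotoped to be transversal to $c$, and formula \eqref{eq.cut0} determines $\Cut_c$ on all such diagrams.

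For existence in the arc case, I would simply invoke \cite[Theorem 1]{Le:TDEC}. For the new case where $c$ is a simple closed loop, I would use the presentation of $\cS(\fS)$ established in the Claim of the proof of Theorem~\ref{thm.slit1}: $\cS(\fS)$ is the free $\cR$-module on isotopy classes of stated $\pfS$-tangle diagrams modulo relations \eqref{eq.skein0}--\eqref{eq.order0} together with Reidemeister moves II and III. Thus it suffices to define $\Cut_c$ on concrete transversal diagrams by \eqref{eq.cut0}, show that the value is invariant under isotopies (of concrete transversal diagrams) through possibly non-transversal representatives, and verify that the four skein relations and the two Reidemeister moves descend.

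The skein and Reidemeister relations are supported in a small box in $\fS$. Using transversality, one may isotope $c$ (within its isotopy class, with the intersection pattern on the diagram fixed outside the box) so that $c$ does not meet the support of the relation; the map $\pr$ is then a diffeomorphism on that box, so the state sum distributes across both sides of the relation and the verification is word-for-word that of the usual skein module. The remaining issue, which is the heart of the argument, is invariance under isotopies that change the set of transversal intersection points. By a standard Morse-theoretic argument on the height function (distance to $c$) along the strands of a diagram, any two transversal representatives of the same isotopy class differ, up to ambient isotopies preserving transversality, by a sequence of \emph{finger moves}: a subarc of the diagram that is disjoint from $c$ is pushed to a subarc meeting $c$ in two adjacent cancelling points (and the reverse). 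One checks that a finger move through $c$ multiplies the state sum by the identity
\[
\sum_{\epsilon\in\{\pm\}} C(\epsilon)^{-1}\,\bigl(\text{cap at }c_2\text{ with states }\epsilon,\bar\epsilon\bigr)\,\otimes\,\bigl(\text{cup at }c_1\text{ with states }\bar\epsilon,\epsilon\bigr),
\]
which, by Equation~\eqref{eq.cup1} applied in $\cS(\fS')$ after gluing back a collar of $c$, equals the straight arc; this is precisely the coassociativity-type identity underlying the cutting map.

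The hard part will be carefully handling the finger moves whose subarc is parallel to the loop $c$ (so that $c$ is traversed ``against itself''), and in particular checking that the orientations assigned to $c_1$ and $c_2$ and the state-lift convention in the definition of $\tal(\bove)$ interact correctly with the $C(\epsilon)$ signs appearing in \eqref{eq.cup1}. This includes the degenerate case of a closed loop parallel to $c$, where the state sum across $c$ must reproduce the factor $-q^2-q^{-2}$ via the closed loop identity \eqref{eq.loop0}, consistently with Lemma~\ref{lem:circularskeins}. Once these sign and orientation checks are made, all isotopy invariance reduces to a finite list of local identities which follow from \eqref{eq.arcs0}, \eqref{eq.order0}, and their consequences \eqref{eq.cup1} and \eqref{eq.arcs1}, so the map $\Cut_c$ descends to a well-defined $\cR$-linear homomorphism on $\cS(\fS,\ori)$.
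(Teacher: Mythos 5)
Your proposal is correct and follows essentially the same approach as the paper: both define $\Cut_c$ on concrete transversal diagrams by the state sum, observe that well-definedness reduces to invariance under a local finger move through $c$, and verify that move by the same computation used for the ideal-arc case in \cite{Le:TDEC} (equivalently, identity \eqref{eq.cup1}), since that computation sees only a small neighbourhood of one point of $c$ and is therefore insensitive to whether $c$ is globally an arc or a loop. Two small remarks: the opening reduction via $\inv_e$ to positive $\ori$ is superfluous, as $c$ lies in the interior of $\fS$ and the orientation of $\pfS$ plays no role in formula \eqref{eq.cut0} or in the relations at the newly created boundary $c_1,c_2$ (whose orientations are fixed by that of $c$); and the worry about finger moves with subarcs ``parallel to $c$'' or about closed components parallel to $c$ is not a separate case to check --- the finger move is purely local, so the global configuration of the diagram relative to $c$ never enters, and compatibility with \eqref{eq.loop0} and Lemma \ref{lem:circularskeins} is then automatic.
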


\begin{proof} The proof for the case when $c$ is an ideal arc is given in \cite{Le:TDEC} and can be applied also to the case when $c$ is a circle:
for a concrete stated $\pfS$-tangle diagram $D$  define $\Cut_c \in \cS(\fS', \ori')$ by the right hand side of \eqref{eq.cut0}. We need to show that $\Cut_c(D)$ depends only on the isotopy class of $D$. It is enough show that $\Cut_c(D)$ is invariant under the move
$$ \incleps{1.3cm}{move4}.$$
This is proved in \cite{Le:TDEC} for the case when $c$ is an ideal arc, but the proof there involves only a small part of $c$ and applies as well in the case when $c$ is a circle.
\end{proof}
 From the definition we see that if $c$ and $c'$ are disjoint then
 \beq  \Cut_{c} \circ \Cut_{c'} = \Cut_{c'} \circ \Cut_{c}. \label{eq.commu5}
 \eeq
\brem In \cite{Le:TDEC} it is proved that if $\fS$ is circle free then $\Cut_c$ is injective. However, cutting along a circle might not be injective. In fact, let $\fS$ be an arbitrary circle-free boundary-oriented surface and $c$ be a trivial simple loop in $\fS$. Cutting $\fS$ along $c$ we get  $\fS'=\fS_1 \sqcup \fS_2$, where $\fS_2$ is a closed disk. By Example \ref{ex:disc}, as $R$-modules $\cS(\fS_2)= R/(q^2+1)$. The cutting homomorphism
$$ \Cut_c: \cS(\fS) \to \cS(\fS') = \cS(\fS_1) \ot_R (R/(q^2+1))$$
is not injective since it maps the empty element $\emptyset$, which is an element of the free $R$-basis $\cB(\fS)$ of $\cS(\fS)$, to a torsion element killed by $q^2+1$. 
\erem

\subsection{Cutting for $3$-manifolds}  \label{sub:gluing} Cutting for 3-manifolds is similar. The case involving a boundary edge is discussed in \cite{BL,LY1}. Let us consider the general case.

Suppose $(M', \cN')$ is a marked 3-manifold, not necessarily    
connected. Assume $c_1, c_2\subset \partial M'$ are two distinct components of $\cN'$ of the same type (i.e. both arcs or circles) and let
$D_1,D_2\subset \partial M'$ be closed disjoint regular neighbourhoods of $c_1$ and $c_2$. This means, if  $c_1, c_2$ are boundary edges then each $D_i$ is a closed disk containing $c_i$ in its interior, otherwise each $D_i$ is a closed annulus containing $c_i$ in its interior and deformation retracts to $c_i$. We assume that $D_i\cap \cN= c_i$.
Choose an  orientation-reversing diffeomophism $\phi:D_1 \to D_2$ such that $\phi(c_1)=c_2$ as oriented arcs or circles. 
Let $M$ be obtained from $M'$ by gluing $D_1$ to $D_2$ via $\phi$ and $\pr:M' \to M$ be the canonical projection. Denote $c= \pr(c_1) =\pr( c_2)$ and $D= \pr(D_1)= \pr(D_2)$. Orient $c$ using the orientation of $c_1$ (or $c_2$).
Consider the marked 3-manifold $\MN$ where
$\cN = \pr^{-1}( \cN' \setminus ( c_1 \cup c_2))$.

An $\cN$-tangle $\al$ in $M$ is {\em $(D,c)$-transversal} if 
\begin{itemize}
\item $\al$ is transversal to $D$,
\item $\al \cap D=\al \cap c$, and
\item the framing at every point of $\al\cap c$ is a positive tangent vector of $c$.
\end{itemize}

It is easy to see that every $\cN$-tangle is $\cN$-isotopic to one which is $(D,c)$-transversal. 

Suppose $\al$ is a  $(D,c)$-transversal stated $\cN$-tangle. Then $\tal:=\pr^{-1}(\al)$ is an $\cN'$-tangle which is stated at every boundary point except for the boundary points in  $c_1\cup c_2$. For every map $\bove: \al \cap c \to \{\pm\}$ let $\tal(\bove)$ be the stated $\cN'$-tangle where the state of a boundary point $u\in N \cup N'$ is $ \bove(\pr(u))$. 

\begin{theorem}\label{thm.gluing} With the above assumptions,
there is a unique $R$-linear homomorphism
$\Cut_{D,c}: \Ss(M,\cN) \to \Ss(M', \cN')$ such that for every $(D,c)$-transversal stated $\cN$-tangle $\al$,
$$
\Cut_{D,c}(\al)= \sum_{\bove: \al \cap c \to \{\pm \}}  \tal(\bove), \quad \text{identity in } \ \Ss(M', \cN').
$$

Furthermore if $(D',c')$ is another pair as above, so that $D\cap D'=\emptyset$ then $\Cut_{D,c}\circ \Cut_{D',c'}=\Cut_{D',c'}\circ \Cut_{D,c}$.

\end{theorem}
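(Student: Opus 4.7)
The plan is to parallel the proof of Theorem \ref{teo:cutting} and its boundary-edge analog for 3-manifolds established in \cite{BL,LY1}, extending the construction uniformly to allow circle components of $\cN$. Since the entire argument is local near $D$, the arc and circle cases are handled by the same local model; the only novelty over the existing arc case is keeping track of the circle topology in the neighborhood $D$ of $c$.

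I would begin by noting that standard general position arguments show every $\cN$-tangle is $\cN$-isotopic to a $(D,c)$-transversal one, and define
\[
\Cut_{D,c}(\alpha)=\sum_{\bove:\alpha\cap c\to\{\pm\}}\tilde\alpha(\bove)\in\cS(M',\cN')
\]
on $(D,c)$-transversal representatives. The main step is then to prove that this sum depends only on the $\cN$-isotopy class of $\alpha$. An $\cN$-isotopy between two $(D,c)$-transversal tangles can be put into general position with respect to $D$ so that it passes through only finitely many non-transversal moments, each modeled by one of two local moves: (a) a strand near $c$ pushes through $D$, creating or annihilating a pair of points of $\alpha\cap c$; or (b) two consecutive points of $\alpha\cap c$ slide past one another along $c$, exchanging their relative order.

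For move (a) the lifted configuration in $M'$ contains a pair of cups, one in the neighborhood of $c_1$ and one in the neighborhood of $c_2$, with states summed freely; applying the cup evaluation \eqref{eq.arcs0} on both sides and comparing with \eqref{eq.cup1} shows that the total contribution equals that of the tangle in which the pair of intersections has been removed. For move (b) the two lifted endpoints on $c_1$ (and symmetrically on $c_2$) exchange their positions along the boundary, so the resulting stated tangles are not $\cN'$-isotopic; however, applying Relation \eqref{eq.order0} to both sides simultaneously and summing over states shows that the state sum is unchanged. Next, the defining skein relations \eqref{eq.skein0}--\eqref{eq.order0} of $\cS(M,\cN)$ are supported in small boxes of $M$ that, up to isotopy, may be arranged disjoint from $D$; the relations then lift verbatim termwise on the state sum, giving well-definedness and $\cR$-linearity of $\Cut_{D,c}$.

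For the commutativity assertion, assuming $D\cap D'=\emptyset$ one can choose a tangle representative simultaneously transversal to both $(D,c)$ and $(D',c')$, and then the double state sum
\[
\sum_{\bove:\alpha\cap c\to\{\pm\}}\ \sum_{\bove':\alpha\cap c'\to\{\pm\}}\tilde{\tilde\alpha}(\bove,\bove')
\]
is symmetric in the two summations, whence both compositions agree. The main obstacle is expected to be the verification of move (a) when $c$ is a circle: one must carefully decode the local picture of a strand pushing through the annular collar $D$ at a point of the circle $c$, checking that the framing condition (positive tangency to $c$) is restored on either side of the move and that the two lifted cups sit correctly near $c_1$ and $c_2$. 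Once this local model is set up, the identity is an immediate application of \eqref{eq.arcs0}--\eqref{eq.cup1}.
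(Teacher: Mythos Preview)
Your proposal is correct and follows essentially the same approach as the paper, which simply observes that well-definedness is a local statement near $D$ and defers the verification to \cite{Le:TDEC}; you have spelled out that local verification in more detail than the paper does. One small imprecision: in your move (a), pushing a strand through $D$ produces a single small cup on one side (say near $c_2$) and an ``opened'' strand with two new endpoints on the other side (near $c_1$), not a cup on each side; the invariance then follows by evaluating the cup via \eqref{eq.arcs0}/\eqref{eq.arcs1} and closing the opened strand via \eqref{eq.cup1}, exactly the identities you cite.
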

\begin{proof}
One needs to prove that the map is well defined. This is a local statement where this verification is identical to that performed in \cite{Le:TDEC}. The proof of all the statements is identical to that given in  \cite{Le:TDEC}. 
\end{proof}

\no{

Moreover the map $\varphi$ is associative in the following sense. Suppose $N_1,N_1',N_2, N'_2\subset \partial M$, are four distinct components of $\cN$ and let $$M_i=M/(N_i=N_i'), \cN_i=pr_i(\cN\setminus \{N_i,N'_i\}), i=1,2\ {\rm and}\ M_{12}=M_1/(N_2=N'_2)=M_2/(N_1=N'_2).$$ 

Let $$\varphi_i:\Ss(M_i,\cN_i)\to \Ss(M,\cN), \varphi_{12}^i:\Ss(M_{12},\cN_{12})\to \Ss(M_i,\cN_i),\  i=1,2$$
be the $\cR$-morphisms described above. 
Then it holds $\varphi_1\circ \varphi^1_{12}=\varphi_2\circ \varphi^2_{12}$.
}

\begin{lemma}
Let $(M,\cN)$ be a marked $3$-manifold and suppose that $c\in \cN$ is a circle component. Let $\alpha\subset M$ be the framed link isotopic to $c$ with the framing tangent to $\partial M$ along $c$. 
Then $[\alpha]=2[\emptyset] \in \cSs(M,\cN)$. 
\end{lemma}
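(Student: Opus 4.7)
The plan is to localize the computation to a tubular neighborhood of $c$ and then reduce to the surface version, Lemma \ref{lem:circularskeins}. Since $c$ is a circle component of $\cN \subset \pM$, it admits a closed tubular neighborhood $U \subset M$ that, as a marked 3-manifold with marking $c$, is diffeomorphic to the thickening of the half-open annulus $\mathbb A := S^1 \times [0,1)$, where $\partial \mathbb A = S^1 \times \{0\}$ carries the positive orientation and is identified with $c$. This identification can be arranged so that $U \cap \pM$ corresponds to the cylindrical boundary $\partial \mathbb A \times (-1,1)$ of the thickening, making the inclusion $\iota : (U, c) \hookrightarrow (M, \cN)$ an embedding of marked 3-manifolds in the sense of Section \ref{sec.prelim}.

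First I would push $\alpha$ slightly off $\pM$ into the interior of $U$, so that $\alpha$ becomes a simple closed curve in the surface $\mathbb A \times \{0\}$ parallel to $c$. I would then check that the framing prescribed in the statement, namely tangent to $\pM$ along $c$, corresponds under this identification to the vertical framing that the surface convention attaches to a $\partial \mathbb A$-tangle diagram. Both framings are nonvanishing sections of the two-dimensional normal bundle of $c$ in $U$; pulled back to $c$, the tangent-to-$\pM$ framing points in the $\partial/\partial z$ direction of the thickening, which is exactly the vertical direction used for surface diagrams. Hence the two framings are homotopic through nonvanishing normal sections and represent the same framed isotopy class in $U$.

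Applying Lemma \ref{lem:circularskeins} to $\fS = \mathbb A$, with $\alpha_1 = \alpha$ parallel to $c$ and $\alpha_2 = \emptyset$, gives the identity $[\alpha] = 2[\emptyset]$ inside $\cSs(\mathbb A) \cong \cSs(U, c)$. Functoriality of stated skein modules under embeddings of pairs then supplies an $\cR$-linear map $\iota_* : \cSs(U, c) \to \cSs(M, \cN)$ sending $[\alpha]$ and $[\emptyset]$ to their counterparts in $M$, whence $[\alpha] = 2[\emptyset] \in \cSs(M, \cN)$.

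The only real subtlety, and the step where I expect to have to be careful, is the framing comparison: one must verify that rotating the framing vector from the ``perpendicular to $\pM$'' position to the ``tangent to $\pM$'' position is a homotopy of sections of the unit normal bundle of $c$, not a change of framing integer that would introduce a correction of the form $-q^{\pm 3}$ coming from \eqref{eq.kink}. This is fine because both candidate framing vectors are constant sections of the trivialised rank-$2$ normal bundle of $c \subset U$, and the rotation between them is a continuous deformation of nonvanishing sections, hence produces the same class in the skein module.
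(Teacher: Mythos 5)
Your proof is correct and follows the same approach as the paper's: localize to a tubular neighborhood of $c$, identify it with the thickening of an annulus with one circular marked boundary, and invoke Lemma~\ref{lem:circularskeins}. The paper states this in two sentences without detail; your additional discussion of the framing comparison is a sound (if slightly over-worried) elaboration of what makes the identification work.
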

\begin{proof} 
A tubular neighbourhood of $c$ in $M$ is homeomorphic to the thickening of an annulus with one circular marked boundary. 
Then the statement follows from Lemma \ref{lem:circularskeins}.
\end{proof}
By Example \ref{ex:disc} we have the following: 
\begin{lemma}
If $(M,\cN)$ is as above and $c\in \cN$ is a circle component bounding a disc in $\partial M$ then $(q^2+1)[\emptyset]=0\in \cSs(M,\cN)$.
\end{lemma}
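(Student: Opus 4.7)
The plan is to reduce the statement to Example \ref{ex:disc}, which establishes $\cSs(D^2) \cong \cR/(1+q^2)$ for the thickening $(D^2 \times (-1,1),\, \partial D^2 \times \{0\})$ of the closed disc via a slitting argument along a diameter. I will transport the torsion relation $(1+q^2)[\emptyset]=0$ from $\cSs(D^2)$ to $\cSs(M,\cN)$ along a suitable embedding of marked 3-manifolds.

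First I would construct an embedding
\[
\iota : \bigl(D^2 \times (-1,1),\; \partial D^2 \times \{0\}\bigr) \hookrightarrow (M,\cN)
\]
sending the circular marking onto $c$. Since $c$ bounds a disc $D \subset \partial M$, I push the interior of $D$ slightly into the interior of $M$, keeping $\partial D = c$ fixed, to obtain a properly embedded disc $D'\subset M$ with $\partial D' = c$. A sufficiently small bicollar of $D'$, compatible with a collar of $\partial M$ near $c$, is diffeomorphic to the thickened disc, with $\partial D^2 \times \{0\}$ mapped onto $c$ and $\partial D^2\times(-1,1)$ mapped onto an annular neighbourhood of $c$ in $\partial M$. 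If the induced orientation on the marked circle disagrees with that of $c$, I invoke Proposition \ref{prop:inversion}; this does not affect the conclusion since $\inv_c([\emptyset])=[\emptyset]$. Functoriality then furnishes an $\cR$-linear map $\iota_*:\cSs(D^2) \to \cSs(M,\cN)$ with $\iota_*([\emptyset])=[\emptyset]$, and applying $\iota_*$ to $(1+q^2)[\emptyset]=0$ yields $(q^2+1)[\emptyset]=0$ in $\cSs(M,\cN)$.

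The main obstacle is verifying that the bicollar of $D'$ gives a proper, orientation-preserving embedding of \emph{marked} 3-manifolds, not merely of the underlying 3-manifolds, but this is a standard tubular-neighbourhood argument exploiting the hypothesis $D\subset \partial M$. I note that the more elementary route --- combining the preceding lemma's $[\alpha]=2[\emptyset]$ with the trivial-loop Kauffman relation $(-q^2-q^{-2})[\emptyset]$ applied to the pushed-off copy of $c$ (which bounds the pushed-off disc) --- only yields $(q^2+1)^2[\emptyset]=0$; the sharper relation genuinely requires the disc computation of Example \ref{ex:disc}.
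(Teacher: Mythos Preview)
Your proof is correct and follows essentially the same approach as the paper: both reduce the claim to Example \ref{ex:disc} (the paper's proof consists of nothing more than the phrase ``By Example \ref{ex:disc}''). You have simply spelled out the embedding argument that transports the relation $(1+q^2)[\emptyset]=0$ from $\cSs(D^2)$ into $\cSs(M,\cN)$, and your closing remark that the combination of the preceding lemma with the trivial-loop relation only yields $(q^2+1)^2[\emptyset]=0$ is a nice observation explaining why the full disc computation is needed.
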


\section{Non-injectivity of several natural maps}

In this section we show that several  homomorphisms between skein modules, which are injective in surface cases, are not injective in 3-manifold cases.

For a non-zero complex number $q^{1/2}$  denote $\Sqq:=\cS\MN$ where  the ground ring is $\cR=(\BC,q^{1/2})$.
 When $\cN=\emptyset$ we denote $\Sqq$ by $\Sq(M)$. Note that in this case $\Sq(M)$ depends only on $q$.

A complex number $q$ is a {\em root of 1} if there is a positive integer $d$ such that $q^d=1$, and the least such positive integer is denoted by $\ord(q)$. The quantum integer is defined by
$$ [n]_q= \sum_{i=-n+1}^{n-1} q^{2i}= \frac{q^{2n} - q^{-2n}}{q^{2} - q^{-2}}.$$
The smallest positive integer $N$ such that $[N]_q=0$ is equal $\ord(q^4)$ as long as $\ord(q^4)>1$.
 For this reason we often use $\ord(q^4)$ instead of $\ord(q)$.

\def\TL{\mathsf{TL}}

\subsection{Pattern in a disk} \label{sec.JW} Let $D$ be the standard closed disk and $W_n\subset \partial D$ be a set of $2n$ points in its boundary. A {\em $W_n$-tangle diagram} $T$ on $D$ is a generic embedding of a compact non-oriented 1-dimensional manifold into $D$ such that $\partial T= W_n$, with the usual under/overcrossing information at every double point like in a knot diagram.  We consider $T$ as a framed tangle in $\tilde D:=D \times(-1,1)$, with vertical framing everywhere.
Define $\TL_{n}$ as the $\BC$-module generated by   isotopy classes of $W_n$-tangle diagrams modulo the skein relations \eqref{eq.skein0} and \eqref{eq.loop0}. Note that $\TL_n$, known as the Temperley-Lieb algebra, depends on $q$ but we suppress $q$ in the notation.
An element $x\in \TL_n$ is called a {\em pattern}.

Suppose $x= \sum c_i T_i\in \TL_n$, where each $T_i$ is a $W_n$-tangle diagram. An element $\al\in \cS\MN$ is a {\em closure of $x$} if there is an embedding of the thickening  $\tilde D:=D \times(-1,1)$ into $M$ such that $\al$ has a presentation $\al = \sum c_i \al_i$  where each $\al_i$ is a stated $\cN$-tangle and $\al_i\cap \tilde D= T_i$, and outside $ \tilde D$ all the tangles $\al_i$ are the same. If we denote the common outside part by $\beta$, then we say that $\al$ is the result of closing $x$ by $\beta$.

\def\JW{\raisebox{-8pt}{\incleps{.8 cm}{JW}}}
\def\JWd{\raisebox{-8pt}{\incleps{.8 cm}{JWd}}}
\def\JWzero{\raisebox{-8pt}{\incleps{.8 cm}{JWzero}}}
\def\BA{{\mathbb A}}
\def\oBA{\mathring{\mathbb A}}
\def\Sym{{\mathrm{Sym}}}

For each $n \ge 0$ the {\em Jones-Wenzl idempotent} is the element $f_n\in \TL_n$, denoted by a box enclosing $n$, and  defined by
$$ f_n = \JW := \frac1{[n]_q!} \sum _{\sigma \in \Sym_n} q^{\ell(\sigma)} \JWd ,$$
where $\Sym_n$ is the group of permutations of $n$ objects, and $\sigma_+$ is the positive braid with minimal number of crossing representing the permutation $\sigma$, and $\ell(\sigma)$ is the length of $\sigma$. For the definition $f_n$ we must assume that $[n]_q!$ is invertible in $\cR$. 
It is known that, see \cite[Lemma 13.2]{Lickorish}, $f_n$ has the non-returnable property:
\be  \JWzero =0,  \label{eq.zero}
\ee
where the cap connect two consecutive right boundary points of the box.

Let $\oBA=(-1,1)\times S^1$ be the open annulus. The core of $\oBA$ is the circle $a=\{0\} \times S^1$. The skein algebra $\cS(\oBA)$ is equal to the ring $R[a]$ of polynomials in $a$. By \cite[Lemma 13.2]{Lickorish}, in   $\cS(\oBA)$ we have
\beq 
\raisebox{-20pt}{\incleps{1.5cm}{JWtrace}} = S_{n}(a),
\label{eq.JWc}
\eeq
where 
 $S_n(x)\in \BZ[x]$ is the Chebychev polynomial of second type defined inductively by
$$ S_0(x)=1, S_1(x)= x, S_n(x) = x S_{n-1}(x) - S_{n-2}(x) \ \text{for} \ n \ge 2.$$

\def\bM{{\mathbf M}}
\subsection{Connected sum}  \label{sec:connected}
For $i=1,2$ assume $\bM_i=(M_i,\cN_i)$ is a connected marked 3-manifold.
Recall that the connected sum $M_1 \# M_2$  is obtained by first removing the interior of a small ball $B_i$ from $M_i$ to obtain $M'_i$ then gluing $M'_1$ with $M'_2$ along the boundaries of $B_i$. Let $\bM_1 \# \bM_2= (M_1\# M_2, \cN_1 \cup \cN_2)$.
Define
$$ \Psi_{\bM_1, \bM_2;q^{1/2}} : \Sq(\bM_1) \ot \Sq(\bM_2) \to \Sq(\bM_1 \# \bM_2)$$
so that if $\al_i\subset M_i$ is a framed tangle not meeting $B_i$, then 
$$ \Psi_{\bM_1, \bM_2;q^{1/2}}(\al_1 \ot \al_2) = \al_1 \cup \al_2, \ \text{as an element of $\Sq(\bM_1 \# \bM_2)$}.$$

It is easy to see that $\Psi_{\bM_1, \bM_2;q^{1/2}}$ is a well-defined $\BC$-linear homomorphism.

J. Pryztycki \cite{Prz}  proved that  if $q$ is not a root of 1 and $\cN_1=\cN_2=\emptyset$, then  $\Psi_{\bM_1, \bM_2;q^{1/2}}$ is  bijective. The proof can be easily extended to the case of arbitrary $\cN_1$ and $\cN_2$ using Proposition \ref{prop:sphere}.
Here we show that in general the map $\Psi_{\bM_1, \bM_2;q^{1/2}}$ is not injective.

\def\bA{{\mathcal A}}
\def\Fr{\mathrm{Fr}}
\def\BA{\mathbb{A}}
\def\BH{\mathbb{H}}

Assume $q$ is a root of 1. For a marked 3-manifold $\bM=\MN$ let
 $F_{q^{1/2}}(\bM)$ be the $\BC$-subspace of $\Sq(\bM)$ spanned by all closures the Jones-Wenzl idempotent $f_{N-1}$, where  $N = \ord(q^4)$.

\bthm [Proof in Subsection \ref{sec.proof}]  \label{thm.connected3}
 Assume $q$ is a complex root of 1 with $\ord(q^4)=N>1$. Then $F_{q^{1/2}}(\bM_1) \ot F_{q^{1/2}}(\bM_2)$ is in the kernel of $\Psi_{\bM_1, \bM_2;q^{1/2}}$. 
\ethm
\brem The proof actually shows that the statement is true over any ground ring, assuming $\ord(q^4)=N >1$ and $[N-1]_q!$ is invertible, so that $f_{N-1}$ can be defined.
\erem
In particular if $N=2$ we get the following: 
\begin{corollary}\label{cor:connected2}
Suppose $\ord(q^4)=2$. If for $i=1,2$ $\alpha_i\subset \bM_i$ is a non-empty $\cN_i$-tangle then $$\Psi_{\bM_1, \bM_2;q^{1/2}}(\alpha_1\sqcup \alpha_2)=0.$$ 
\end{corollary}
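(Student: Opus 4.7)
\textbf{Proof plan for Corollary \ref{cor:connected2}.}
The plan is to deduce the corollary directly from Theorem \ref{thm.connected3} by unpacking what the Jones--Wenzl idempotent $f_{N-1}$ looks like in the degenerate case $N=2$. When $\ord(q^{4})=2$ we have $N-1=1$, and the defining sum
$$f_{1}=\frac{1}{[1]_{q}!}\sum_{\sigma\in\Sym_{1}}q^{\ell(\sigma)}\,\sigma_{+}$$
collapses to a single term, namely the identity tangle on two points, i.e.\ a single vertical strand in the disk $D$. Thus a closure of $f_{N-1}=f_{1}$ in $\bM$ is just an $\cN$-tangle containing an arc that lies inside an embedded thickened disk $\tilde D\subset M$ and meets $\tilde D$ in a single unknotted unframed strand.

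Next I would argue that every non-empty $\cN$-tangle $\alpha\subset\bM$ is of this form, so $\alpha\in F_{q^{1/2}}(\bM)$. Concretely, since $\alpha$ is non-empty I pick an interior point $p$ of some component of $\alpha$ and choose a small $3$-ball $B\subset M\setminus\cN$ around $p$ such that $\alpha\cap B$ is a single unknotted framed arc. Identifying $B$ with a thickened disk $\tilde D = D\times(-1,1)$ so that $\alpha\cap B$ corresponds to the vertical strand with vertical framing shows $\alpha$ is a closure of $f_{1}$ by the very stand-in $x=f_{1}$ itself (a single-term sum with coefficient $1$). Consequently $\alpha\in F_{q^{1/2}}(\bM)$ whenever $\alpha$ is non-empty.

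Finally I would combine the two inputs: given non-empty $\alpha_{i}\subset\bM_{i}$ for $i=1,2$, the previous step gives $\alpha_{i}\in F_{q^{1/2}}(\bM_{i})$, so
$$\alpha_{1}\otimes\alpha_{2}\in F_{q^{1/2}}(\bM_{1})\otimes F_{q^{1/2}}(\bM_{2})\subset \ker \Psi_{\bM_{1},\bM_{2};q^{1/2}}$$
by Theorem \ref{thm.connected3}. Evaluating $\Psi_{\bM_{1},\bM_{2};q^{1/2}}$ on $\alpha_{1}\otimes\alpha_{2}$ produces the disjoint union $\alpha_{1}\sqcup\alpha_{2}$ inside $\Sq(\bM_{1}\#\bM_{2})$, which must therefore vanish. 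There is no real obstacle in this argument; the only subtle point is the observation that the $N=2$ case trivialises $f_{N-1}$ so that the hypothesis of membership in $F_{q^{1/2}}$ is automatic for any non-empty tangle, making Theorem \ref{thm.connected3} apply universally.
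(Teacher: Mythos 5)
Your proof is correct and follows exactly the same route as the paper: since $N=2$, the Jones--Wenzl idempotent $f_{1}$ is just a plain strand, so any non-empty tangle can be presented as a closure of $f_{1}$ by choosing a small ball around an interior point of the tangle; hence the conclusion follows from Theorem \ref{thm.connected3}. The paper's proof is a one-liner stating precisely this observation, so your more detailed argument is a correct unpacking of it.
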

\begin{proof}Since $\ord(q^4)=2$, the subspace $F_{q^{1/2}}(\bM_i)$ is spanned by closures of $f_1$, which is the same as a plain strand. 
\end{proof}

We expect that if  $\ord(q^4) >1$ and $\pi_1(M)$ is non-trivial then $F_{q^{1/2}}(M) \neq 0$. This is true at least for thickened surfaces:

\bpro  \label{r.nonz} Assume   $\fS$ is a circle free boundary-oriented surface with non-trivial fundamental group, and $q\in \BC$ is a root of 1 with $\ord(q^4) >1$.
Then $F_{q^{1/2}}(\fS) \neq \{0\}$. More precisely for any non-trivial simple closed curve $\al\subset \fS$ we have $0\neq S_{N-1}(\al) \in F_{q^{1/2}}(\fS)$. 
\epro

\begin{proof} For $n \in \BN$ the elements $\al^n\in \cS(\fS)$, presented by $n$ parallel copies of $\al$, are distinct elements of the free  basis $\cB(\fS)$ of $\cS(\fS)$ described in Subsection \ref{sec.slit1}. Hence  $S_n(\al)\neq 0$ for all $n$. By \eqref{eq.JWc} the skein $S_n(\al)$ is a closure of $f_{n}$.
 Thus $0\neq S_{N-1}(\al) \in F_{q^{1/2}}(\fS)$. 
 \end{proof}
\bcor For $i=1,2$ suppose $M_i= \fS_i \times (-1,1)$ where $\fS_i$ is a circle free boundary-oriented surface with non-trivial fundamental group. Let $x_i\subset \fS_i$ be a non-trivial simple closed curve. Assume $q$ is a root of 1 with $\ord(q^4)=N >1$. Then $S_{N-1}(x_1) \otimes S_{N-1}(x_2)$ is a non-zero element of the kernel of  $\Psi_{M_1, M_2;q^{1/2}}$.
\ecor

A special useful case is when $M_1, M_2$ are the thickening of the annulus $\oBA= (0,1)\times S^1$.

\bcor \label{r.connected4}
Suppose $M_1\cong M_2 \cong \oBA \times (-1,1)$. Let $x_i$ be the core of $M_i$. Assume $\ord(q^4)=N >1$. Then $S_{N-1}(x_1) \otimes S_{N-1}(x_2)$ is a non-zero element of the kernel of $\Psi_{M_1, M_2;q^{1/2}}$.
\ecor

\begin{remark}\label{rem.connected}
 In an earlier version of the paper we proved Corollary \ref{cor:connected2} which shows the non-injectivity of $\Psi_{M_1,M_2; q^{1/2}}$ for the case $\ord(q^4)=2$, for a large class of 3-manifolds. Then,
 answering the second author's question about a generalisation to higher order roots, H. Karuo \cite{Karuo} proved a weaker version of Corollary \ref{r.connected4}, showing that the kernel of $\Psi_{M_1, M_2;q^{1/2}}$ contains a 
  polynomial in $\BC[x_1, x_2]$ with highest term $x_1^{N-1} x_2^{N-1}$. Here  we have an explicit formula for a polynomial in the kernel, and we will use this explicit formula in the proof of the non-injectivity of the Chebychev-Frobebius homomorphism, see Theorem~\ref{thm.Frobenius}.
\end{remark}

\def\bSi{\boldsymbol{\Sigma}}
\subsection{Empty tangle element} If $\fS$ is a circle free boundary-oriented surface then the empty tangle, being an element of the free basis $\cB(\fS)$, is not zero and moreover serves as the unit of the algebra structure.

The situation can change for 3-manifolds. Suppose $\MN$ is a marked 3-manifold. 
We say that an embedded sphere $S=S^2 \embed M$ lying in the interior of $M$ is {\em marking separating } if there there is a properly embedded path $a: [0,1] \embed M$ transversal to $S$ and meeting $S$ at exactly one point such that $a(0), a(1) \in \cN$.

\bthm [Proof in Subsection \ref{sec.proof}] \label{thm.zero}
Assume  a marked 3-manifold $\MN$ has a marking separating sphere $S$, and $q$ is a complex root of 1 such that $\ord(q^4) >1$. Then any stated $\cN$-tangle not meeting $S$ is equal to 0 in $\Sq\MN$. In particular the empty tangle is zero.
\ethm

\subsection{Non-injectivity of the cutting homomorphism}

  For surfaces the cutting homomorphism along an ideal arc is always injective, see \cite{Le:TDEC}. 
 \begin{theorem}[Proof in Subsection \ref{sec.proof}] \label{thm.CH}
Suppose $q$ is a complex root of 1 with $\ord(q^4) >1$.  
There exists a compact 3-manifold $M$, a properly embedding disk $E \embed M$, and an oriented open interval $e\subset E$ such that the cutting homomorphism 
$$\Cut_{E,e}: \Sq(M) \to \Sq(M', \cN')$$
 is not injective. Here $(M', \cN')$ is the result of cutting $(M, \emptyset)$ along $(E,e')$.
 \end{theorem}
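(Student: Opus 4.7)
The plan is to construct an explicit counterexample by invoking Theorem~\ref{thm.zero}: I will arrange the target $(M',\cN')$ of the cutting so that it contains a marking-separating sphere --- forcing the empty skein to vanish in $\Sq(M',\cN')$ --- while keeping the empty skein non-zero in $\Sq(M)$. The non-injectivity then follows immediately, since $\Cut_{E,e}$ sends the empty skein to the empty skein.

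First, I take $(M',\cN') := (S^2 \times [0,1], \{c_1,c_2\})$, where $c_i \subset S^2 \times \{i-1\}$ is an oriented open arc, chosen so that some point $p\in S^2$ satisfies $(p,0)\in c_1$ and $(p,1)\in c_2$. The equatorial sphere $S:=S^2\times\{1/2\}$ is then marking-separating: the vertical path $t\mapsto (p,t)$ starts in $c_1$, ends in $c_2$, and meets $S$ at exactly one point. Hence by Theorem~\ref{thm.zero} we obtain $\emptyset=0$ in $\Sq(M',\cN')$. Next I pick closed disk neighbourhoods $D_i\subset S^2\times\{i-1\}$ of $c_i$ and glue $D_1$ to $D_2$ by an orientation-reversing diffeomorphism identifying $c_1$ with $c_2$ as oriented arcs, as in Subsection~\ref{sub:gluing}. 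The result is an unmarked 3-manifold $(M,\emptyset)$ equipped with a properly embedded disk $E\subset M$ (the common image of $D_1$ and $D_2$) and an oriented open interval $e\subset E$ (the common image of $c_1$ and $c_2$); topologically one checks $M\cong (S^2\times S^1)\setminus \mathrm{int}(B^3)$. By construction, the associated cutting homomorphism is precisely $\Cut_{E,e}:\Sq(M)\to \Sq(M',\cN')$, and by the previous step $\Cut_{E,e}(\emptyset)=\emptyset=0$ in $\Sq(M',\cN')$.

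It remains to verify that $\emptyset\neq 0$ in $\Sq(M)$. For this I would use the inclusion $\iota:M\hookrightarrow S^2\times S^1$, which induces a $\BC$-linear map $\iota_*:\Sq(M)\to \Sq(S^2\times S^1)$ sending the empty skein to the empty skein. By the Hoste--Przytycki computation of the Kauffman bracket skein module of $S^2\times S^1$, the empty skein generates a rank-one free $\BC$-summand of $\Sq(S^2\times S^1)$, so $\iota_*(\emptyset)\neq 0$ and hence $\emptyset\neq 0$ in $\Sq(M)$. Combined with the previous step this shows $\Cut_{E,e}$ has non-trivial kernel.

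The main obstacle in this approach is the last step: certifying that the empty skein does not already vanish in $\Sq(M)$. I would resolve this by citing Hoste--Przytycki; alternatively, one could stay self-contained by using the Heegaard-decomposition description of $\cSs(M)$ (Theorem~\ref{teo:heegaard}) to compute $\Sq(M)$ directly, or by constructing an explicit evaluation map $\Sq(M)\to\BC$ that is non-zero on $\emptyset$ (for instance, after embedding framed links in $M$ into $S^2\times S^1$ and reading off the coefficient of $\emptyset$ in the Hoste--Przytycki basis).
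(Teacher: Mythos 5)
Your proposal is correct and takes essentially the same approach as the paper: the paper defines $M=(S^2\times S^1)\setminus(d\times c)$ directly and cuts along $E=d'\times\{t\}$ to reach $(M',\cN')\cong(S^2\times[0,1],\{e_1,e_2\})$, then invokes Theorem~\ref{thm.zero} on the sphere $S^2\times\{t'\}$ and the Hoste--Przytycki computation to see $\emptyset\neq 0$ in $\Sq(M)$; you build the same pair from the $M'$ side by gluing, which is equivalent. Your last step is a mild simplification --- you only use that the inclusion $\iota_*:\Sq(M)\to\Sq(S^2\times S^1)$ sends $\emptyset$ to $\emptyset$ rather than the paper's stronger assertion that $\iota_*$ is an isomorphism --- but otherwise the arguments coincide.
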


 \def\hM{\hat M}
 \def\hcN{\widehat {\cN}}
 \def\hbM{\hat {\bM}}

 \subsection{Non-injectivity of adding a marking} Let $\bM=\MN$ be a marked 3-manifold where $M$ is connected. Choose a closed ball $B$ in the interior of $M$. Let $\hbM = (\hM, \hcN)$ where $\hM = M \setminus \mathring B$ and $\hcN= \cN \cup c$, where $c$ is an open interval on $\partial B$. Define the $R$-linear map $\Gamma_\bM: \cS(\bM) \to \cS(\hbM)$ as follows. Suppose $\al\in \cS(\bM)$ is represented by a stated $\cN$-tangle $T$. By an isotopy we can assume $T$ does not meet $B$. Then  $\Gamma_\bM(\al)=T$ as an element $\cS(\hbM)$. It is easy to see that $\Gamma$ is well-defined. This construction is closely related to the notion of quantum fundamental group discussed in Subsection \ref{sub:results}.

 \begin{theorem} Assume $q$  is a complex root of unity with $N=\ord(q^4) >1$. There exists a marked 3-manifold $\bM=\MN$  such that $\Gamma_\bM$ is not injective.
 \end{theorem}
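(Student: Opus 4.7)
The plan is to reduce the problem to Theorem \ref{thm.zero}. Excising a small open ball from the interior of $M$ and adjoining a new marked interval on the resulting boundary sphere automatically produces a marking-separating sphere inside $\hbM$; this forces the empty element to vanish in $\cS(\hbM)$. It then suffices to choose $\bM$ simple enough that the empty element is visibly non-zero on the source side.

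First I would take $\bM$ to be the thickening of the monogon, $\bM = (\PP_1 \times (-1,1),\, \partial \PP_1 \times \{0\})$. Using the identification $\cS(\PP_1) \cong \cR$ recalled in Section~\ref{sec.prelim} (whose inverse sends $r \mapsto r[\emptyset]$), the class of the empty stated tangle $[\emptyset]$ corresponds to the generator $1 \in \cR$, and in particular is non-zero in $\cS(\bM)$.

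Next, form $\hbM = (\hM, \hcN)$ by removing the interior of a small closed ball $B$ from the interior of $M$ and adjoining to the marking an open interval $c \subset \partial B$. Let $S \subset \hM$ be a slight outward push-off of $\partial B$; it is an embedded $2$-sphere in the interior of $\hM$. Choose a properly embedded arc $a : [0,1] \embed \hM$ starting at a point of $c$, passing radially outward through $S$ exactly once, and terminating at a point of $\cN \subset \partial M$. Both endpoints of $a$ lie in $\hcN = \cN \cup c$ and $a$ meets $S$ transversally at a single point, so $S$ is marking-separating for $\hbM$. Theorem~\ref{thm.zero} then yields $[\emptyset] = 0$ in $\cS(\hbM)$.

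To conclude, the construction of $\Gamma_\bM$ sends the empty stated tangle in $\bM$ to the empty stated tangle in $\hbM$, so $\Gamma_\bM([\emptyset]) = 0$ while $[\emptyset] \neq 0$ in $\cS(\bM)$, and $\Gamma_\bM$ is not injective. There is no real obstacle here: the only point to verify is that $S$ is marking-separating, which is essentially forced by the construction, since the new marked interval $c$ is quarantined between $\partial B$ and $S$ while the original marking $\cN$ lies entirely on the other side of $S$. The same argument works with $\PP_1$ replaced by any circle-free boundary-oriented surface whose stated skein module is non-zero, so many explicit examples $\bM$ are available.
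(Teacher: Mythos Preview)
Your argument is correct and is essentially the paper's own first proof: take $\bM$ the thickening of the monogon (equivalently the closed 3-ball with a single marked arc), observe that $\hbM$ is $S^2\times[1,2]$ with one marked arc on each boundary sphere, and apply Theorem~\ref{thm.zero} to the separating sphere $S^2\times\{3/2\}$ to kill $[\emptyset]$ in $\cS(\hbM)$ while $[\emptyset]\neq 0$ in $\cS(\bM)\cong\cR$. The paper also records a second, independent proof via the commutative square relating $\Gamma_\bM\otimes\Gamma_\bM$ to the connected-sum map $\Psi_{\bM,\bM;q^{1/2}}$ and Corollary~\ref{r.connected4}, which yields a much larger family of examples.
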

 
 \begin{proof} We present here two independent proofs.

(i) Let $M$ be the closed 3-ball and $\cN$ be an open interval on $\partial M$. The $\hM= S^2 \times [1,2]$ and 
 $\hcN$ consisting of two intervals $e_1, e_2$ where $e_i \subset S^2 \times \{i\}$.
 Clearly the sphere $S^2 \times \{3/2\}$ is separating $e_1$ and $e_2$. By Theorem \ref{thm.zero} the empty tangle is equal to 0 in $\Sq(\hbM)$. Since $\bM$ is the thickening of the monogon $\PP_1$,  the empty tangle is not 0 in $\Sq(\bM)= \BC$. 
 
 (ii)
  The following proof gives a much larger class of examples. 
  First assume $\bM$ be any marked 3-manifold.
 We have the following commutative diagram 
 \be 
\begin{tikzcd}
\Sq(\bM ) \ot \Sq(\bM)\arrow[r,"\Psi_{\bM, \bM;q^{1/2}}"]\arrow[d,"\Gamma_{\bM} \ot \Gamma_{\bM} "] & \Sq(\bM \# \bM)\arrow[d,"\Gamma_{\bM \# \bM}   "] \\
\Sq (\hat{\bM})  \ot  \Sq (\hat{\bM}) \arrow[r,"\cong "]  &  \Sq (\widehat{\bM \# \bM})\\
\end{tikzcd}
\label{eq.dia1}
\ee 
where the lower map is the isomorphism of Theorem \ref{teo:fundgroup}. Corollary \ref{r.connected4} showed that there are examples when $\Psi_{\bM, \bM;q^{1/2}}$ is not injective. In that case the commutative diagram implies that 
 $\Gamma_{\bM} \ot \Gamma_{\bM}$ is not injective, which in turns,  implies that  $\Gamma_{\bM}$ is not injective.
 \end{proof}

\subsection{Non-injectivity of the Chebyshev-Frobenius homomorphism}
Suppose $q^{1/2}$ is a root of 1 and $N= \ord (q^4)$. Let $\epsilon := q^{N^2/2}$. Note that $\epsilon^8=1$.

The Chebyshev polynomial of first type $T_n(x)=\sum_{i=0}^N c_i x^i\in \BZ[x]$ is defined  by the identity
$$ T_n(u+u^{-1}) = u^n + u^{-n}.$$
 
For a framed knot $\al$ in an oriented 3-manifold $M$ define the {\em $T_N$-threading} of $\al$ by
$$ \al^{(T_N)} = \sum_{i=0}^N c_i \al^{(i)}, \text{considered as an element of } \ \Sq(M),$$
where $\al^{(i)}$ is $i$ parallel push-offs (using the framing) of $\al$ lying in a small neighbourhood of~$\al$. When $\al$ is the disjoint union of $k$ framed knots, $\al= \sqcup_{i=1}^k \al_i$, its threading is defined by linear extrapolation:
$$ \al^{(T_N)} = \al_1^{(T_N)} \cup \dots \cup \al_k^{(T_N)}:= \sum_{i_1,\dots, i_k=0}^N c_{i_1} \dots c_{i_k} \left [ \al_{1}^{(i_1)} \cup \dots \cup \al_{k}^{(i_k)} \right].$$

The Chebyshev-Frobenius homomorphism is  the $\BC$-linear map 
$$\Phi_{q^{1/2}}: \cS_\epsilon (M) \to \Sq (M)$$ defined so that if $x\in \cS_\epsilon(M)$ is  presented by disjoint union $\al$  of framed knots then
\be 
\Phi_{q^{1/2}}(\al) =  \al^{(T_N)} \quad \text{considered as an element of } \ \Sq(M).
\ee
The well-definedness of $\Phi_{q^{1/2}}$ is not an easy fact. When $M$ is a thickened surface without boundary Bonahon and Wong \cite{BW} showed that $\Phi_{q^{1/2}}$ is well-defined. The result is extended to all 3-manifolds in \cite{Le3}. For the case of marked 3-manifolds see \cite{BL,LP}, where the definition of $\Phi_{q^{1/2}}$ needs to be modified for arcs. 
When $M$ is the thickening of a surface $\fS$ without boundary  $\Phi_{q^{1/2}}$ is injective as it maps the basis $\cB(\fS)$ of $\cS_\epsilon(\fS)$ injectively into a basis of $\Sq(\fS)$. Here we show that $\Phi_{q^{1/2}}$ is not injective in general.

\begin{theorem} \label{thm.Frobenius}

Let $q$ be a complex root of 1 with $\ord(q^4)= N >1$. There exists a compact oriented 3-manifold $M$ such that 
Chebyshev-Frobenius homomomorphism 
$\Phi_{q^{1/2}}: \cS_\epsilon(M) \to \Sq(M)$ is not injective.

\end{theorem}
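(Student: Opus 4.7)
The plan is to combine the explicit kernel element for $\Psi^{q^{1/2}}$ provided by Corollary \ref{r.connected4} with the polynomial divisibility $S_{N-1}(x)\mid T_N(x)^2-4$ in order to construct a nonzero element of $\ker\Phi_{q^{1/2}}$.

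I would take $M:=M_1\# M_2$ where $M_1\cong M_2\cong S^1\times D^2$ are compact solid tori, and let $x_i$ denote the framed core of $M_i$, viewed as an element of the skein module at either parameter. Define
\[
y:=(x_1^2-4)(x_2^2-4)\in\cS_\epsilon(M),
\]
where $x_i^k$ denotes $k$ disjoint parallel push-offs of $x_i$ inside $M_i$ and the outer product is disjoint union across the two summands. By the definition of $\Phi_{q^{1/2}}$ on disjoint framed knots one has
\[
\Phi_{q^{1/2}}(y)=(T_N(x_1)^2-4)(T_N(x_2)^2-4)\in\Sq(M).
\]
The key algebraic fact is $S_{N-1}(x)\mid T_N(x)^2-4$ in $\BZ[x]$: at each root $x=2\cos(k\pi/N)$ of $S_{N-1}$ one has $T_N(x)=2(-1)^k$, so $T_N(x)^2-4=0$, and simplicity of the roots of $S_{N-1}$ forces divisibility. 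Writing $T_N(x_i)^2-4=S_{N-1}(x_i)\,P_i(x_i)$ with $P_i\in\BZ[x_i]$, one obtains
\[
\Phi_{q^{1/2}}(y)=P_1(x_1)P_2(x_2)\cdot S_{N-1}(x_1)S_{N-1}(x_2),
\]
which vanishes in $\Sq(M)$ by Corollary \ref{r.connected4} (applied to compact solid tori, for which the kernel statement transfers verbatim since $\cS(S^1\times D^2)$ and the distinguished subspace $F_{q^{1/2}}$ agree with those of $\oBA\times(-1,1)$).

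The main obstacle is showing $y\ne 0$ in $\cS_\epsilon(M)$. My strategy is to use the commutative square
\[
\begin{tikzcd}
\cS_\epsilon(M_1)\otimes\cS_\epsilon(M_2) \arrow[r,"\Psi^\epsilon"] \arrow[d,"\Phi_1\otimes\Phi_2"'] & \cS_\epsilon(M) \arrow[d,"\Phi_{q^{1/2}}"] \\
\Sq(M_1)\otimes\Sq(M_2) \arrow[r,"\Psi^{q^{1/2}}"'] & \Sq(M),
\end{tikzcd}
\]
writing $y=\Psi^\epsilon(z)$ with $z:=(x_1^2-4)\otimes(x_2^2-4)\in\BC[x_1]\otimes\BC[x_2]\cong\cS_\epsilon(M_1)\otimes\cS_\epsilon(M_2)$ obviously nonzero, and then showing $z\notin\ker\Psi^\epsilon$. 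When $\ord(\epsilon^4)=1$ (the case $N$ even), Theorem \ref{thm.connected3} produces no $\epsilon$-side kernel and a Przytycki-style argument shows $\Psi^\epsilon$ is injective, giving $y\ne 0$ at once. When $\ord(\epsilon^4)=2$ (the case $N$ odd), the kernel contains the ideal $(x_1 x_2)$; however the evaluation $z\big|_{x_1=0}=-4(x_2^2-4)\ne 0$ shows $z\notin(x_1 x_2)$, and a direct analysis of the connected-sum skein module at this small-order parameter (using the explicit basis of the quotient $\BC[x_1,x_2]/(x_1 x_2)$ and checking that no further relations are imposed) confirms that $\ker\Psi^\epsilon$ coincides with $(x_1 x_2)$. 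Controlling $\ker\Psi^\epsilon$ precisely for this concrete connected-sum manifold is the genuine technical heart of the argument.
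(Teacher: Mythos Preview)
Your construction of the element $y=(x_1^2-4)(x_2^2-4)$ and the verification that $\Phi_{q^{1/2}}(y)=0$ via the divisibility $S_{N-1}\mid T_N^2-4$ match the paper's argument exactly. The divergence, and the gap, is in showing $y\ne 0$ in $\cS_\epsilon(M)$.

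Your case analysis on $\ord(\epsilon^4)$ does not go through. In the case $\ord(\epsilon^4)=1$ you invoke ``a Przytycki-style argument'' for injectivity of $\Psi^\epsilon$, but Przytycki's sphere argument requires inverting $1-\epsilon^{2n+4}$ for all $n\ge 1$; since $\epsilon^8=1$ always (indeed $\epsilon^4=1$ in this case), these factors vanish and the argument collapses. That Theorem~\ref{thm.connected3} produces no kernel at $\epsilon$ is not evidence that $\Psi^\epsilon$ is injective. In the case $\ord(\epsilon^4)=2$ you assert that $\ker\Psi^\epsilon$ equals the ideal $(x_1x_2)$; Corollary~\ref{cor:connected2} gives only the inclusion $(x_1x_2)\subset\ker\Psi^\epsilon$, and the reverse inclusion---which you yourself flag as ``the genuine technical heart''---is neither proved nor referenced. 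Computing $\cS_\epsilon(M_1\# M_2)$ exactly at an eighth root of unity is a nontrivial problem in its own right.

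The paper avoids this entirely. Since $\epsilon^8=1$ one has $\epsilon^2\in\{\pm1,\pm i\}$, and in each case there is a surjective algebra homomorphism $\Omega:\cS_\epsilon(M)\to\BC[\chi(M)]$ to the $SL_2(\BC)$-character ring (by Bullock and Przytycki--Sikora for $\epsilon^2=\pm1$, and by Sikora's twisted product for $\epsilon^2=\pm i$, using that $M$ embeds in $S^3$). Since $\pi_1(M)$ is free on two generators, $\BC[\chi(M)]=\BC[u_1,u_2,u_{12}]$, and one computes directly that $\Omega(y)$ is a nonzero polynomial in $u_1,u_2$. This gives $y\ne 0$ without any knowledge of $\ker\Psi^\epsilon$.
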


\begin{proof}  Let $M= M_1  \#M_2$ where each $M_i$ is a 
thickened annulus $\BA \times[-1,1]$, a solid torus. Let $x_i$ be the core of $M_i$. Recall that $\Psi_{M_1,M_2;\epsilon }$ is the connected sum homomorphism (Subsection \ref{sec:connected}).
Define 
$$x= \Psi_{M_1,M_2;\epsilon }((x_1^2-4) \ot (x_2^2-4)) \in \cS_\epsilon(M) .$$
 By definition, 
$$ \Phi_{q^{1/2}}(x)= \Psi_{M_1,M_2;q^{1/2}} ( (T_N(x_1)^2-4) \ot (T_N(x_2)^2-4)) \in \Sq(M) .$$
Let us show that  $T_n(x)^2-4 \in S_{n-1}(x) \BZ[x]$. Embed $\BZ[x] \embed \BZ[u^{\pm 1}]$ by $x=u+ u^{-1}$. 
 Then
 $$ T_n(x)^2-4 = (u^n + u^{-n})^2-4=(u^n - u^{-n})^2 = (u-u^{-1})^2 S_{n-1}(x)^2 \in S_{n-1} (x) \BZ[x].$$
Thus $(T_N(x_1)^2-4) \ot (T_N(x_2)^2-4)\in F_{q^{1/2}}(M_1) \ot F_{q^{1/2}}(M_2)$. By Corollary \ref{r.connected4} we have $\Phi_{q^{1/2}}(x)=0$.

It remains to show $x\neq 0$ in $\cS_\epsilon(M)$.

First we assume $\epsilon^2= \pm 1$. In this case $\cS_\epsilon(M)$ has the structure of a commutative algebra where for two disjoint framed links $\al$ and $\beta$ in $M$ the product $\al\beta$ is the disjoint union $\al \sqcup \beta$. As a $\BC$-algebra $\cS_\epsilon(M)$ is isomorphic to the universal $SL_2$-character variety of $M$, see \cite{Bullock,PS1}. In particular there is a surjective algebra homomorphism $\Omega: \cS_\epsilon(M) \to \BC[\chi(M)]$, where $\chi(M)$ is the $SL_2(\BC)$-character variety of the fundamental group $\pi_1(M)$. The fundamental group of $M$ is free on two generators $z_1$ and $z_2$ where $z_i$ is a loop representing the core of $M_i$. It is known that $\BC[\chi(M)]$ is the ring of polynomials in 3 variables $u_1=\tr(z_1), u_2=\tr(z_2)$ and $u_{12}=\tr(z_1z_2)$. 
In particular, we have an embedding $\BC[u_1, u_2 ] \embed \BC[\chi(F_2)]$. By definition,  $\Omega(x_1)= \sign(x_i) u_i$, where $\sign(x_i) \in \{\pm1\}$ whose exact value is not important as $\Omega(x_1^2)= u_i^2$. 
It follows that 
$$ \Omega(x) = (u_1^2-4)(u_2^2-4) 
\neq 0 \ \text{in } \BC[u_1, u_2 ] \subset  \BC[\chi(F_2)].$$
Hence $x\neq 0$. 

Now assume $\epsilon^2=\pm i$. 
 Note that $M$ can be embedded into $S^3$ since each solid torus $M_i$ can. Sikora \cite{Sikora} showed that when $M$ can be embedded into a homology sphere, the skein module $\cS_\epsilon(M)$, with $\epsilon^2=\pm i$, has a commutative algebra structure such that if $\al, \beta$ are framed knots them $\al \beta =  s(\al,\beta) (\al \cup \beta)$ where $s(\al,\beta)\in \{ \pm 1\}$. 
 Moreover the algebra $\cS_\epsilon(M)$ is also isomorphic to the universal $SL_2(\BC)$-character ring of $M$, and we get a surjective algebra homomorphism $\Omega: \cS_\epsilon(M) \to \BC[\chi(M)]$. Now $\Omega(x_i) = \pm q^{d_i} u_i$, where $d_i\in \BZ$. It follows that 
$$ \Omega(x) = (\pm q^{2d_1} u_1^2-4)(\pm q^{2d_2} u_2^2-4) 
\neq 0 \ \text{in } \BC[u_1, u_2 ] \subset  \BC[\chi(F_2)].$$
Hence $x\neq 0$. This completes the proof of the theorem.
\end{proof}

\subsection{Proofs}\label{sec.proof}
 For integers $k,m \ge 0$ let $v_{k,m}$ and $u_{k,m}$ be the elements defined in Figure \ref{fig:slideaa}, which are patterns, i.e. elements of $\TL_{2(k+m)}$. Here each box stands for the Jones-Wenzl idempotent $f_{k+m}$. A circle enclosing a number $k$ means there are $k$ parallel strands passing the circle.
By convention $u_{0,0}= v_{0,0} =\emptyset$.

\def\slideab{\raisebox{-20pt}{\incleps{2 cm}{slideab}}}
\def\slideabb{\raisebox{-10pt}{\incleps{1 cm}{slideabb}}}
\def\slideabc{\raisebox{-10pt}{\incleps{1 cm}{slideabc}}}
\def\slideabd{\raisebox{-10pt}{\incleps{1 cm}{slideabd}}}
\def\slideabe{\raisebox{-10pt}{\incleps{1 cm}{slideabe}}}
\def\slideabcc{\raisebox{-10pt}{\incleps{1 cm}{slideabcc}}}
\def\slideabf{\raisebox{-10pt}{\incleps{1 cm}{slideabf}}}
\def\slideabg{\raisebox{-10pt}{\incleps{1 cm}{slideabg}}}
\FIGceps{slideaa}{}{2cm}

The proof of the following main technical lemma uses only the non-returnable property of the Jones-Wenzl idempotent.
\blem\label{r.uvkm} If $k\ge 1$ then
\be
u_{k,m} = q^{4k-2} v_{k,m} + q^{2k-4}(q^{2k } -q^{-2k} ) v_{k-1, m+1}.
\label{eq.uvkm}
\ee
\elem
\begin{proof} The skein relation \eqref{eq.skein0} replaces a crossing with the sum of a positive and a negative resolution of the crossing, each with a positive or negative power of $q$. The non-returning property of the Jones-Wenzl idempotent \eqref{eq.zero} shows that for the upper $2k-2$ crossings in $k_{k,m}$ only the positive resolution results in a non-zero term. Hence 
\be 
u_{k,m} = q^{2k-2} \slideab. 
\label{eq.12}
\ee
Resolve the upper left crossing,
$$ 
\slideabb = q^{} \ \slideabd  + q^{-1} \ \slideabc 
$$
For the first tangle, resolve the crossings on the left from  top to bottom, then the crossings on the right from bottom to top, except for the very last one. Only positive resolutions contribute. For the second tangle resolve the left crossings from top to bottom. Only negative resolutions contribute. Thus we have
$$ 
\slideabb = q^{2k-1} \ \slideabe  + q^{-k} \ \slideabcc
$$
For the left tangle resolve the crossing in two ways. For the right one, note that removing the kink using \eqref{eq.kink}. After that only negative resolutions contribute. Eventually we get
$$
\slideabb = q^{2k} \ \slideabf   +   q^{2k-2 } \ \slideabg -  q^{-2k-2} \ \slideabg.
$$ 
Using the above identity in \eqref{eq.12} we get \eqref{eq.uvkm}.
\end{proof}

\def\eqY{\overset{\#}{=}}
\def\cl{\mathsf{cl}}
\begin{proof}
[Proof of Theorem \ref{thm.connected3}] Assume that the shaded rectangle $D$ in the picture of $v_{k,m}$ (Figure \ref{fig:slideaa}) is embedded in $M= M_1 \#M_2$ so that the separating sphere $S$ of the connected sum $M_1 \#M_2$ meets $D$ in the vertical line separating $D$ into two equal halves.
In what follows  $x \eqY x'$ for $x,x'\in \TL_{2k+2m}$ means  that if $\cl(x)$ and $\cl(x')$ are closures of $x$ and $x'$ respectively by the same closing element not meeting $S $, then $\cl(x)=\cl(x')$ as elements of 
 $\Sq(M)$.

Sliding the top strand of $v_{k, m}$ over the sphere $S^2$ and taking into account the framing, we~get
 $$ u_{k,m} \eqY q^{-6} v_{k,m}, \quad \text{if} \ k\ge 1.  $$
Using \eqref{eq.uvkm}, we get
$$ q^{-6} v_{k,m} \eqY q^{4k-2} v_{k,m} + q^{2k-4}(q^{2k } -q^{-2k} ) v_{k-1, m+1}$$
Multiply by $q^{4-2k}$,
$$ (q^{-2k-2 } -q^{2k+2} )   v _{k,m} \eqY   (q^{2k } -q^{-2k} )   v _{k-1,m+1}  $$
Replacing $k$ by $k-1$ and continue until $k=1$, we get
\be  (q^{-2k-2 } -q^{2k+2} )   v _{k,m} \eqY   (-1)^k(q^{2 } -q^{-2} )   v _{0,m+k}.  
\label{eq.eq1}
\ee
Let $m=0$ and $k=N-1$. 
The scalar of the left hand side is 0 because $\ord(q^4)= N$, and  the scalar of the right side is not 0. Hence
$ v_{0, N-1} \eqY 0.$

Since any element of $F_{q^{1/2}}(M_1) \ot F_{q^{1/2}}(M_2)$ is a linear combination of closures of $v_{0,N-1}$, we have  $F_{q^{1/2}}(M_1) \ot F_{q^{1/2}}(M_2)\eqY 0$. This completes the proof of Theorem \ref{thm.connected3}.
\end{proof}

\no{

\brem The proof showed that when $\ord(q^4)=2$ the image of $\Psi_{\bM_1, \bM_2;q^{1/2}}$. It is not difficult to find $M_1,M_2$ such that $\Sq(M_1 \# M_2)$ is non-zero. For example $M_1=M_2=\BA$. In this case the map $\Psi_{\bM_1, \bM_2;q^{1/2}}$ is not surjective.
\erem
}

\def\Zq{{\BZ[q^{\pm 1/2}]}}

\begin{proof}[Proof of Theorem \ref{thm.zero}] By definition there are components $e_1, e_2$ of $ \cN$ and  a properly embedded path $a$ in $M$ meeting $S$ transversally at one point such that one endpoint of $a$ is in $e_1$ and the other is in $e_2$. It might happen that $e_1=e_2$.
 Let $\alpha$ be a stated $\cN$-link contained in $M\setminus S$.
We can embed the shaded square $D$ into $M\setminus \alpha$ so that the left side of $D$ is $e_1$, the right side is $e_2$, and $S$ meets $D$ in the vertical line dividing $D$ into two equal halves.

For integers $k,m \ge 0$ let $v'_{k,m}$ and $u'_{k,m}$ be the stated diagrams on $D$ as given in Figure \ref{fig:slidexa}.

\FIGceps{slidexa}{}{2.5cm}
Here on a left  side or right side, from bottom to top,  there are $k+m$ negative states followed by $k+m$ positive states. Note the similarity between $v'_{k,m}$ and $v_{k,m}$, and $u'_{k,m}$ and $u_{k,m}$. Instead of the Jones-Wenzl boxes at the boundary in $v_{k,m}$ and $u_{k,m}$ we have states, all positive or all negative in a place where we had a box before. Because they are the same states, we still have the non-returnable property, by the defining relation \eqref{eq.arcs0} of stated skein modules. Since $\alpha$ is contained outside a neighborhood of $S\cup D$, the proof of Lemma \ref{r.uvkm} and  subsequent arguments, where we used only the non-returnable property of the Jones-Wenzl idempotent, are still valid if we replace $v_{k,m}$ and $u_{k,m}$ by $v'_{k,m}\sqcup \alpha$ and $u'_{k,m}\sqcup \alpha$. Thus we have the analog of Identity \eqref{eq.eq1}
\be  (q^{-2k-2 } -q^{2k+2} )   (v' _{k,m}\sqcup \alpha) =   (-1)^k(q^{2 } -q^{-2} )  ( v' _{0,m+k} \sqcup \alpha).
\notag
\ee
Again let $m+k=N-1$. Then the left hand side is 0.  Hence $v' _{0,N-1}\sqcup \alpha=0$. But $v' _{0,N-1}$ consists of $2(N-1)$ trivial arcs, each has one positive and one negative state. From the defining relation \eqref{eq.arcs0} we have
$$ v' _{0,N-1}\sqcup \alpha = q^{l/2} \alpha, \ l \in \BZ.$$
It follows that $\alpha=0$ in $\Sq\MN$.
\end{proof}

\begin{proof}[Proof of Theorem \ref{thm.CH}] 

Let  $M$ be the result of removing the 3-ball $d \times c$ from $S^2 \times S^1$, where $d\subset S^2$ is an open disk and $c\subset S^1$ is an open interval. The embedding $Y \embed S^2 \times S^1$ induces an isomorphism of skein modules. The skein module $\cS(S^2 \times S^1; \Zq)$ over the ring $\Zq$ has been calculated by Hoste and Przytycki  \cite{HP}:
\be 
\cS(S^2 \times S^1; \Zq) = \Zq \oplus  \bigoplus_{i=1}^\infty \Zq/(1-q^{2i+4}),
\label{eq.HP}
\ee
where the first component $\Zq$ is the free $\Zq$-module generated by $\emptyset$. By change of ground ring we have   $\emptyset\neq 0$ in $\Sq(Y)$, for any non zero $q^{1/2}\in \BC$.

Let $E$ be the disk $E=d'\times \{t\}$, where $d'= S^2 \setminus d$ and $t\in c$, and $e$ be an open interval in $E$. Let $(M', \cN')$ be the result of cutting $Y$ along $(E,e)$, with $\cN'= e_1 \cup e_2$, where each $e_i$ is a preimage of $e$. The two components $e_1$ and $e_2$ are separated by the sphere $S^2 \times t'$ where $t'\not \in c$. By Theorem \ref{thm.zero} we have $\emptyset=0$ in $\Sq(M', \cN')$, but $\emptyset$ is not 0 in $\Sq(Y)$. This shows  the cutting homomorphism is not injective.
\end{proof} 

\brem The component $\Zq/(1-q^{2i+4})$ in \eqref{eq.HP}  is generated over $\Zq$ by $x_i$ which is $i$ parallel copies of the a curve $x \times S^1$, where $x\in S^2$. By changing the ground ring to $\BC$ with $q$ a root of 1 with $\ord(q^4) =N > 1$, we see that the element $x_{N-2}$ is not zero in $\Sq(Y)$. However, a calculation can also show that the image of $x_{N-2}$ under the cutting homomorphism is 0. This gives another example of elements in the kernel of the cutting homomorphism.
\erem

\section{Comodule and module structures on stated skein modules }\label{sec:comodule}

\subsection{Marked surfaces}  
\label{sec.basis0}

\begin{definition}[Marked surface]
A marked surface is a pair $(\Sigma, \P)$, where $\Sigma$ is a  compact oriented surface with boundary $\pSi$
and $\cP= \cP^0 \sqcup \cP^1$ such that $\cP^0$  consists of a finite number of signed points in the boundary $\pSi$, called marked points, and $\cP^1$ the union of some  oriented components of $\pSi$ not having marked points.  We also assume each connected component of $\Sigma$ has at least one marked point.

The orientation of a component of $\cP^1$ is positive if it is the one induced from the orientation of $\Sigma$ and negative else.

Each connected component  of $\P^1$ is  called a ``circular marking''. A circle component of $\pS \setminus \P^1$  is called a ``puncture component''.

The thickening of $(\Sigma, \P)$, also called a thickened marked surface, is the
 marked 3-manifold $\MN$ where $M =\Sigma \times (-1,1)$,  and $\cN= (\P^0 \times (-1,1)) \cup \P^1 $, with the identification $\Sigma\equiv \Sigma\times \{0\} \subset M$. The orientation of the boundary edge $p \times (-1,1)$, for $p\in \P^0$, is the positive or the negative orientation of $(-1,1)$ according as $p$ is positive or negative.
 
The stated skein module of $(\Sigma, \P)$ is $\cS(\Sigma, \P)= \cS\MN$.

An embedding of marked surfaces $j:(\Sigma,\P)\hookrightarrow (\Sigma',\P')$ is an orientation preserving  proper  embedding of surfaces $j:\fS\hookrightarrow \fS'$ such that $j(\P)\subset \P'$ and $j$ preserves the orientation of each component of $\cP$.
 \end{definition}

Such an embedding induces in embedding of the corresponding thickened surfaces, and hence an $R$-linear map
$j_*: \cS(\Sigma, \P) \to \cS(\Sigma', \P').$

\brem Our marked surfaces are finer than usual as marked points are signed, and the marking set might contain oriented circular boundary components. For technical reasons we  require that each connected component of $\fS$ has at least one marked point.
\erem

Suppose $\P$ has no circular component. The skein module $\cS(\Sigma, \P)$ has an algebra structure where the product of two  stated $\cN$-tangles $\al$ and $\beta$ is obtained by stacking $\al$ above $\beta$. This means, we first isotope so that $\al \subset \Sigma \times (0,1)$ and $\beta \subset (-1,0)$ then define $\al \beta = \al \cup \beta$.  For $e=p\times(-1,1)$ where $p\in \P^0$  we will denote $\inv_p=\inv_e$, where $\inv_e$ defined in Proposition \ref{prop:inversion}. 
Remark that with this product, the map  $\inv_p$ is an algebra isomorphism.

For a marked surface $(\Sigma,\cP)$ its associated boundary-oriented surface $\fS= \fS_{(\Sigma,\cP) }$ is defined as follows. For each $p\in \P^0$ let $N(p)\subset \pSi$ be a small open interval containing $p$. Let $\fS$ be the result of removing the boundary $\pSi$, except for $\P^1$ and all the $N(p)$, from $\Sigma$:
$$\fS= (\Sigma\setminus \pSi ) \cup \P^1 \cup (\bigcup_{p\in \P^0} N(p)) .$$ 
The orientation $\ori$ on $\pfS= (\bigcup_{p\in \P^0} N(p)) \cup \P^1$ is defined by:  A component in $\cP^1$ is already oriented, while a component $N(p)$ is oriented by the orientation coming from $\fS$ or its reverse according as  $p$ is positive or negative. The resulting $(\fS,\ori)$ is a boundary-oriented surface. 

The requirement that each connected component of $\Sigma$ has at least one marked points implies that each connected component of $\fS$ has a boundary edge. Conversely, it is easy to see that every boundary-oriented surface $\fS$ where each connected component has at least one boundary edge is of the form $\fS= \fS_{(\Sigma, \cP)}$ for certain marked surface $(\Sigma, \cP)$.

Identify $\cS(\Sigma, \P)$ with $\cS(\fS)$ via the following $R$-linear isomorphism
$$ \cS(M, \cN) \xrightarrow{\inv  }   \cS(M, \cN')  \xrightarrow{f_* }   \cS(M, \P^1 \cup (\cup_{p} N(p)  ), $$
where
\begin{itemize}
\item $\inv$ is the composition of  $\inv_p$ of all negative $p\in \P^0$, and $\cN'$ is the same as $\cN$, except that the orientation of each $p \times (-1,1)$ is the one coming from $(-1,1)$, 
\item $f: (M,\cN') \to (M, \P^1 \cup (\cup_{p} N(p)  )$  is identity except in a small neighbourhood of each $N(p) \times (-1,1)$ in which it rotates $p\times (-1,1)$ by $\pi/4$ or $-\pi/4$ to make it become $N(p)$, matching   the natural orientation of $p\times (-1,1)$ with the orientation $\ori$ of $N(p)$.
\end{itemize}
We will often use the above identification $\cS(\Sigma, \P)\equiv \cS(\fS).$ With this identification, the skein module $\cS(\fS,\ori)$ of a circle free boundary-oriented surface  has an algebra structure which was studied in many works, for example \cite{Le:TDEC,CL,LY1,LY2,KQ}.

\def\pPP{\partial \PP}
\def\boP{{\mathbf P}}
\subsection{The bigon} \label{ss.Oq}

From now on let $\PP_2$ be the boundary-oriented bigon where the boundary orientation is positive on one edge, called the right edge $e_r$, and negative on the other, called the left edge $e_l$, see Figure \ref{fig:bigon10}(a). The corresponding marked surface is $\boP_2= (D, \cP)$, where $D$ is the standard closed disk and $\cP$ consists of two points in $\partial D$, one positive and one negative.

\FIGceps{bigon10}{(a) Bigon $\PP_2$. (b) The horizontal arc. (c) Product $xy$}{2.5cm}

 Let $a,b,c,d$ be the stated $\pPP_2$-arc of Figure \ref{fig:bigon10}(b), where $\nu\mu$ are respectively $++, +-, -+, --$. In \cite{Le:TDEC,CL} it is proved that the algebra $\cS(\PP_2)$ is generated by $a,b,c,d$ subject to the following relations:
 \begin{align*}
  ba&= q^2 ab, ca = q^2 ac, db = q^2 bd, dc = q^2 cd \\
 bc &= cb, ad - q^{-2} bc = da - q^{2} bc =1.
\end{align*} 
The product of two elements, represented by stated $\pPP_2$-tangle diagrams $x$ and $y$, is the union $x\cup y$ where we first isotope $x$ so that it is higher than $y$, see Figure \ref{fig:bigon10}(c).

In \cite{CL} we defined geometrically the  coproduct, counit, and antipode which make $\cS(\PP_2)$ a Hopf algebra. The coproduct is particularly simple: by cutting the bigon $\PP_2$ along the ideal arc connecting the two vertices, we get two copies of $\PP_2$, and the cutting homomorphism is the coproduct $\Delta$. On the generators the counit $\ve$ and the antipode $S$ are given by
\begin{align}
\ve(a) &= \ve(d)=1, \ve(b)= \ve(c)=1 \label{eq.epsilon}\\
S(a) &= d, S(d)=a, S(b) = - q^2 b, S(c) = - q^{-2} c.  \label{eq.antipod}
\end{align}
The above Hopf algebra is the well known quantised coordinate algebra $\OSL$ of the Lie group $SL_2(R)$. 
The following was proved in \cite{CL}:
\begin{proposition}
Let $(\fS,\ori)$ be a boundary oriented surface and $e\subset \partial \fS$ a positive (resp. negative) edge. Then $\cSs(\fS,\ori)$ is a right (resp. left) algebra comodule over $\cSs(\PP_2)=\OSL$ with the coaction induced by cutting along an oriented edge $e'\subset \fS$ parallel to $e$. 
\end{proposition}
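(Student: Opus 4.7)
The plan is to define the right coaction $\Delta_e:\cSs(\fS,\ori)\to \cSs(\fS,\ori)\otimes \OSL$ as the cutting homomorphism of Theorem~\ref{teo:cutting} along a suitably chosen ideal arc parallel to $e$, and then verify the three comodule axioms (coassociativity, counit, algebra compatibility) by exploiting the commutation relation \eqref{eq.commu5} together with the fact, recalled in Subsection~\ref{ss.Oq}, that the coproduct of $\OSL=\cSs(\PP_2)$ is itself the cutting homomorphism of the bigon along its central ideal arc.

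\emph{Definition of $\Delta_e$.} Choose an oriented ideal arc $e'\subset \fS$ parallel to $e$, so that $e$ and $e'$ cobound a bigon region $B\subset\fS$. Theorem~\ref{teo:cutting} gives a cutting map $\Cut_{e'}:\cSs(\fS,\ori)\to \cSs(\fS',\ori')$ where $(\fS',\ori')$ is the disjoint union of a copy of $(\fS,\ori)$ (the complement of $B$, whose new boundary edge $e'$ inherits the orientation of $e$) and a copy of the bigon $(\PP_2,\ori_2)$ with $e$ as its positive edge (when $e$ is positive). Identifying $\cSs(\fS',\ori')\cong \cSs(\fS,\ori)\otimes \OSL$ by the disjoint-union formula and placing the $\OSL$-factor on the right, set $\Delta_e:=\Cut_{e'}$.

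\emph{Coassociativity.} Pick a second ideal arc $e''$ parallel to $e$, disjoint from $e'$, with $e''$ lying between $e'$ and the interior of $\fS$. By \eqref{eq.commu5} the operators $\Cut_{e'}$ and $\Cut_{e''}$ commute. Cutting along $e'$ first and then along $e''$ carves off two successive bigons and produces $(\Delta_e\otimes \id)\circ \Delta_e$. Cutting along $e''$ first yields $\fS\sqcup B'$ where $B'$ is a larger bigon containing $e$; cutting $B'$ along the image of $e'$ is, by definition of the Hopf structure on $\OSL$, the coproduct $\Delta_{\OSL}$. This yields $(\id\otimes \Delta_{\OSL})\circ \Delta_e$, and equality of the two expressions is coassociativity.

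\emph{Counit.} Recall that the counit of $\OSL$, viewed geometrically, sends a stated $\pPP_2$-arc from the left edge to the right edge with states $\mu,\nu$ to $\delta_{\mu,\nu}$, as one checks on the generators $a,b,c,d$. Composing $(\id\otimes \epsilon)\circ \Delta_e$ thus collapses the bigon $B$ back to the original surface: the state-sum over $\al\cap e'$ together with the $\delta$-pairing forces the two copies of each intersection point to carry the same state, which is exactly the effect of the defining relation \eqref{eq.arcs0} when one reglues the bigon through trivial caps. Hence $(\id\otimes \epsilon)\circ \Delta_e=\id$.

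\emph{Algebra compatibility.} When $\fS$ is circle-free, $\cSs(\fS,\ori)$ is an algebra by vertical stacking in $\fS\times (-1,1)$; the bigon region $B$ is a surface-level neighbourhood of $e$, so a vertical stacking of two $\pfS$-tangle diagrams $x,y$ is compatible with cutting along $e'$. Concretely, the states assigned to $x\cap e'$ and $y\cap e'$ can be summed independently, and the stacking splits as the tensor product of the stacks on either side of $e'$; this gives $\Delta_e(xy)=\Delta_e(x)\Delta_e(y)$, which is exactly the statement that $\Delta_e$ is an algebra homomorphism into the tensor-product algebra $\cSs(\fS)\otimes \OSL$.

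For $e$ negative, the same argument applies with the bigon placed on the left-hand factor, producing a left-comodule structure. The main obstacle is bookkeeping: one must track carefully how the orientations of $e'$ and of the bigon edges match the canonical identification $\cSs(\PP_2)\cong \OSL$ so that the ``cut once'' of $\fS$ and the ``cut twice'' (which introduces $\Delta_{\OSL}$) are truly compatible. Once this identification is fixed, the three axioms all reduce to the commutativity of disjoint cuts \eqref{eq.commu5} and to the defining properties of $\OSL$ as the Hopf algebra obtained by self-cutting the bigon.
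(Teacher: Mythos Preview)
Your argument is correct and is exactly the approach one expects; note that the paper does not actually supply a proof here but simply cites \cite{CL}, where this proposition was established. Your outline---defining $\Delta_e$ as $\Cut_{e'}$, deducing coassociativity from the commutation \eqref{eq.commu5} together with the identification of $\Delta_{\OSL}$ as the bigon's own cutting map, and checking the counit via $\epsilon(\alpha(\vec\mu,\vec\nu))=\delta_{\vec\mu,\vec\nu}$---is precisely the argument given in \cite{CL} and also the template the present paper reuses for the analogous Propositions \ref{prop:circlecomodule} and \ref{prop:comodule}.

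One small clean-up: in the counit step you invoke relation \eqref{eq.arcs0} and ``trivial caps'', but this is unnecessary and slightly misleading. The counit axiom follows directly from the state-sum formula for $\Cut_{e'}$: applying $\id\otimes\epsilon$ kills every term in $\sum_{\bove}\tilde\alpha(\bove)$ except the one where the states on the two preimages of each point of $\alpha\cap e'$ agree, and that surviving term is exactly the original diagram $\alpha$ viewed back in $\fS$. No skein relation is needed.
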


\subsection{The annulus}
\label{ex:annulus}
Let $\AA= [-1,1] \times S^1$ be the boundary-oriented annulus with one positive orientation on one boundary component, called the right component,  and one negative orientation on the other, called the left component. A slit along a properly embedded  arc connecting the two boundary components yields the bigon $\PP_2$, where the right (resp. left) component goes to the right (resp. left) edge. By Theorem~\ref{thm.slit2}, 
$$ \cS(\AA) = \cS(\PP_2)/\sim = \OSL/\sim.$$
 Relation $\sim$ of Theorem \ref{thm.slit2}, with the product structure as described in Subsection \ref{sec.basis0}, translates to  $xy = yx$ for all $x,y\in \OSL$. Thus we have
  $$\cS(\AA) = \OSL/(xy-yx),$$
  which is known as  the 0-th Hochchild homology ${\rm HH_0}(\OSL)$.
This space was computed in \cite{FT} over $\mathbb{C}$ when $q$ is not a root of unity; its complete structure when working over arbitrary ground ring $\cR$ is unknown to us.
But it is not difficult to show that over the ring $\Zq$ the module ${\rm HH_0}(\OSL)$ 
 contains torsion. For instance it is an easy exercise to show that $(q^2-1)\tau (ab)=0$ but $\tau (ab)\neq 0$, where $\tau: \OSL \onto  {\rm HH_0}(\OSL)$ is the natural projection and $a,b$ are the generators given in Subsection \ref{ss.Oq}.  We also observe for later purposes that if $\gamma$ is  the core of the annulus then by Lemma \ref{lem:circularskeins} we have $\gamma=2[\emptyset]$ in ${\rm HH_0}(\OSL)$.

Note that the product in $\OSL$ does not descend to a product in ${\rm HH_0}(\OSL)$. However the coalgebra structure does descend to ${\rm HH_0}(\OSL)$. For this we need to check that $\sim$ is a coideal. In fact if
 $\Delta(x)=x_{1}\otimes x_2$ and $\Delta(y)=y_{1}\otimes y_2$ (in Sweedler's notation) then  $$\Delta([x,y])=[x_1,y_1]\otimes x_2y_2+y_1x_1\otimes [x_2,y_2]$$ and $\epsilon([x,y])=0$. Here $[x,y]= xy-yx$.

As in \cite{CL}, the existence of the cutting morphism allows to  prove the following:
\begin{proposition}\label{prop:circlecomodule}
Let $(\fS,\ori)$ be a boundary-oriented  surface with a circular marking $c$ oriented positively (resp. negatively). Then $\cSs(\fS)$ is a right (resp. left) comodule over the coalgebra ${\rm HH_0}(\OSL)$  via the coaction given by cutting along an oriented circle $c'\subset \fS$ parallel to $c$. 
\end{proposition}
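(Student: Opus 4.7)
The plan is to follow the blueprint of the analogous proposition for edge markings (stated in Subsection~\ref{ss.Oq}), whose proof appears in \cite{CL}, replacing the bigon and $\OSL$ by the annulus $\AA$ and $\cSs(\AA)={\rm HH}_0(\OSL)$, and the ideal-arc cutting by the interior-circle cutting of Theorem~\ref{teo:cutting}. Since the target of the coaction is only a coalgebra, it suffices to verify coassociativity and counitality. Assume $c$ is positively oriented. Choose an oriented interior circle $c'\subset \fS$ isotopic to $c$ and cobounding with $c$ a collar annulus $\AA\subset \fS$. Cutting along $c'$ disconnects $\fS$ into $\AA$ (with boundary $c$ and one copy of $c'$) and a component $\fS_0$ diffeomorphic to $\fS$ (with the other copy of $c'$ as boundary); combining with the disjoint-union isomorphism produces
$$
\rho\ :=\ \Cut_{c'}\ :\ \cSs(\fS,\ori)\ \longrightarrow\ \cSs(\fS_0)\otimes \cSs(\AA)\ \cong\ \cSs(\fS,\ori)\otimes {\rm HH}_0(\OSL).
$$
Independence of $\rho$ from the choice of $c'$ is immediate from ambient isotopy inside the collar of $c$.

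For coassociativity, I would choose two parallel interior circles $c_1', c_2'\subset \fS$ in the collar of $c$, with $c_1'$ between $c$ and $c_2'$, and cut along both. This disconnects $\fS$ into three pieces: a collar annulus $\AA_1$ between $c$ and a copy of $c_1'$, an intermediate annulus $\AA_2$ between copies of $c_1'$ and $c_2'$, and a copy of $\fS$. By the commutativity~\eqref{eq.commu5}, the two orders of cutting agree. On the other hand, cutting the full collar annulus $\AA$ (between $c$ and $c_2'$) along $c_1'$ into $\AA_1 \sqcup \AA_2$ realises precisely the coproduct $\Delta:\cSs(\AA)\to \cSs(\AA_1)\otimes \cSs(\AA_2)$ of ${\rm HH}_0(\OSL)$, which by Subsection~\ref{ex:annulus} descends from that of $\OSL$. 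Therefore
$$
(\rho\otimes \id)\circ \rho\ =\ \Cut_{c_2'}\circ \Cut_{c_1'}\ =\ \Cut_{c_1'}\circ \Cut_{c_2'}\ =\ (\id\otimes \Delta)\circ \rho.
$$

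Finally, the counit axiom $(\id\otimes \epsilon)\circ \rho=\id$ is checked using the state sum~\eqref{eq.cut0}: for an $\cN$-tangle $\alpha$ transverse to $c'$, $\rho(\alpha)=\sum_{\bove}\tilde\alpha(\bove)|_{\fS_0}\otimes \tilde\alpha(\bove)|_\AA$, and the second factor is a disjoint union of arcs crossing the collar with states on $c$ and $c'$ matched by $\bove$. Slitting $\AA$ via Theorem~\ref{thm.slit1} identifies $\cSs(\AA)$ with the quotient of $\cSs(\PP_2)=\OSL$ by the commutator relation from Subsection~\ref{ex:annulus}; under this identification each such arc corresponds to one of the generators $a,b,c,d$, and evaluating the counit~\eqref{eq.epsilon} of $\OSL$ selects precisely the terms in which each pair of matched endpoints carries equal states, reassembling into $\alpha$. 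The case of a negatively oriented $c$ follows by composing with the orientation-inversion isomorphism $\inv_c$ of Proposition~\ref{prop:inversion}. The main obstacle is this final counit check: one must carefully track the framing twists and scalar factors $C(\pm)$ produced by the slitting, and confirm that the counit values descend unambiguously through the quotient $\OSL \onto {\rm HH}_0(\OSL)$ and combine to give an honest identity without spurious scalars.
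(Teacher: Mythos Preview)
Your proposal is correct and follows essentially the same route as the paper: define the coaction via $\Cut_{c'}$ for a parallel interior circle, deduce coassociativity from the commutativity \eqref{eq.commu5} of two parallel circle cuts, and check counitality on the state sum. The paper's counit verification is slightly more direct than yours: rather than passing through the slit identification $\cSs(\AA)\cong \OSL/[\,,\,]$ and tracking the scalars $C(\pm)$, it simply observes that a family of parallel arcs $\alpha(\vec\eta,\vec\nu)$ in $\AA$ with states $\vec\eta$ on one side and $\vec\nu$ on the other satisfies $\epsilon(\alpha(\vec\eta,\vec\nu))=\delta_{\vec\eta,\vec\nu}$ directly from \eqref{eq.epsilon}, since the counit is already known to descend to ${\rm HH}_0(\OSL)$ (Subsection~\ref{ex:annulus}); this sidesteps the framing and scalar bookkeeping you flag as the main obstacle.
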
 
\begin{proof}
We prove the statement for $c$ positive, the other case is similar. 
The annulus bounded by $c'$ and $c$ is identified with the standard annulus $\bA$, where $c$ is the right boundary component.  
The cutting morphism $\Theta_{c'}:\cSs(\fS)\to \cSs(\fS\sqcup A)=\cSs(\fS)\otimes_{\cR}{\rm HH_0}(\OSL)$ is coassociative by Identity \eqref{eq.commu5}. Furthermore if $\alpha(\vec\eta,\vec\nu)$ is a disjoint union of parallel arcs embedded in $A$ and connecting the two boundary components with states $\vec{\eta},\vec{\nu}$ then from \eqref{eq.epsilon} we have $\epsilon(\alpha(\vec{\eta},\vec{\nu}))=\delta_{\vec{\eta},\vec{\nu}}$ so that $(Id_{\fS}\otimes \epsilon)\circ \Delta=Id_\fS$. 
\end{proof}

\subsection{Comodule structure of $\cSs(M,\cN)$}
Let $(M,\cN)$ be a marked manifold and $c$ be a component of $\cN$. We will show that associated to $c$ is a comodule structure of $\cSs(M,\cN)$ over $\cSs(\P_2)$ (if $c$ is an arc) or ${\rm HH}_0(\OSL)$ (if $c$ is a circle). 

Suppose first that $c$ is an oriented arc, i.e. $c$ is the image of $(-1,1)$ via a smooth embedding of $[-1,1]$ in $\partial M$. Let us denote $\overline{c}$ the image of $[-1,1]$ via the embedding and let $N(c)$ be a regular neighbourhood of $\overline{c}$ in $M$.

Let $D^2$ be the unit disc in $\mathbb{C}$ and let $\psi_+:N(c) \to D^2\times I$ (resp. $\psi_-$) be an orientation preserving diffeomorphism sending $c$ to $\{+1\}\times (-1,1)$ (resp. $\{-1\}\times (-1,1)$).
Letting $M'=M\setminus int(N(c))$, there exists an orientation preserving diffeomorphism  $\phi:M'\to M$ unique up to isotopy which is the identity out of a neighbourhood of $N(c)$; let $\cN'=\phi^{-1}(\cN)$ and $c'=\phi^{-1}(c)$. 
Endow $N(c)$ with the marking $\cN''=\{\pm 1\}\times (-1,1)$. 
Cutting $N(c)$ out of $M$ is obtained by cutting along a properly embedded disc $D$ containing $c'$ and by Theorem \ref{thm.gluing} we obtain a morphism: 
$$\Cut_{D,c'}:\cSs(M,\cN)\to \cSs(M',\cN')\otimes_\cR\cSs(N(c),\cN'')=\cSs(M,\cN)\otimes_\cR \cSs(N(c),\cN'')$$
where the second equality is induced by $\phi_*\otimes Id_{\cSs(N(c),\cN'')}.$ 
Now observe that $(N(c),\cN'')$ is diffeomorphic to a thickened bigon endowed with two positive markings. In order to get the bigon with a negative and a positive marking (whose stated skein algebra, as recalled in Subsection \ref{ss.Oq}, is canonically $\OSL$) we need to apply one inversion morphism $\inv$. Via the identification $\psi_+$ (resp. $\psi_-$) the image of $c$ is $e_r=\{1\}\times (-1,1)$ (resp. $e_l=\{-1\}\times (-1,1)$). 
Therefore in order to get a right $\OSL$-module structure on $\cSs(M,\cN)$ we define: $$\Delta_R=(Id_{\cSs(M,\cN)}\otimes (\inv_{e_l}\circ (\psi_+)_*))\circ \Cut_{D,c'}\qquad$$
and in order to get a left $\OSL$-module structure we define: $$\Delta_L=((\inv_{e_l}\circ (\psi_-)_*)\otimes \Id_{\cSs(M,\cN)}) \circ \tau\circ \Cut_{D,c'}$$ where $\tau(x\otimes y)=y\otimes x$. 
\begin{proposition}\label{prop:comodule}
$$\Delta_R:\cSs(M,\cN)\to \cSs(M,\cN)\otimes \cSs(D^2\times I,\{1\}\times [-1,1])$$
is a right comodule structure. 
Similarly 
$$\Delta_L:\cSs(M,\cN)\to \cSs(D^2\times I,\{-1\}\times [-1,1])\otimes \cSs(M,\cN)$$
 is a left comodule structure. 
\end{proposition}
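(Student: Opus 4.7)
The plan is to verify the two comodule axioms—coassociativity and counitality—for $\Delta_R$; the argument for $\Delta_L$ is entirely symmetric. The proof will rest on three facts established or recalled earlier: the commutativity of disjoint cuts given by Equation \eqref{eq.commu5} of Theorem \ref{thm.gluing}, the geometric description of the coproduct on $\OSL=\cSs(\PP_2)$ as the cutting homomorphism along an ideal arc parallel to the boundary of the bigon (recalled in Subsection \ref{ss.Oq}), and the explicit formula \eqref{eq.epsilon} for the counit.

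For coassociativity, I would pick two disjoint properly embedded disks $D^{(1)}, D^{(2)}\subset N(c)$, each meeting $c$ transversally in a single sub-arc, arranged so that $D^{(1)}$ lies between $c$ and $D^{(2)}$. Cutting along both disks splits $N(c)$ into a thickened bigon adjacent to $c$, another thickened bigon, and a collar identified with $M^\circ\cong M$ via $\phi$. Performing $\Cut_{D^{(2)}}$ first and then $\Cut_{D^{(1)}}$ inside the resulting bigon realizes $(\Id_{\cSs(M,\cN)}\otimes\Delta_{\OSL})\circ\Delta_R$, because the inner cut is precisely the geometric coproduct of $\OSL$. Performing the same two cuts in the opposite order computes $(\Delta_R\otimes\Id_{\OSL})\circ\Delta_R$. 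By Equation \eqref{eq.commu5} the two compositions of cutting morphisms agree, and after inserting the identifications $\phi_*$, $(\psi_+)_*$, and $\inv_{e_l}$ in both sides symmetrically, the two expressions coincide.

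For counitality, I would work directly from the state-sum formula of Theorem \ref{thm.gluing}. Given a $(D,c')$-transversal stated tangle $\alpha$, the element $\Delta_R(\alpha)$ is a sum indexed by states $\vec{\epsilon}$ on $\alpha\cap c'$, where the bigon part is a disjoint union of arcs joining $c'$ to $c$ with states coming from $\vec{\epsilon}$ on one end and from $\alpha$ on the other (after applying $\inv_{e_l}$). By \eqref{eq.epsilon}, the counit $\epsilon$ kills any arc whose two endpoint states disagree and sends an arc with matching states to $1$, up to the scalar produced by $\inv_{e_l}$. Summing over $\vec{\epsilon}$ collapses the sum to the single summand where the states on $c'$ match the states of $\alpha\cap c$, and the scalars from $\inv_{e_l}$ applied to the bigon part cancel the ones needed to reassemble $\alpha$ across $c$; this reconstructs $\alpha$ itself.

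The main obstacle will be the careful bookkeeping of orientations, framings, and the half-twist factors introduced by $\inv_{e_l}$ and $(\psi_\pm)_*$, so as to confirm that the resulting coaction lands in $\OSL$ equipped with the Hopf algebra structure recalled in Subsection \ref{ss.Oq}, rather than in its opposite or a twisted variant. The underlying topological equalities are a straightforward consequence of Theorem \ref{thm.gluing} and the Hopf structure on $\cSs(\PP_2)$; what takes genuine work is verifying that all the conventions line up compatibly, and that the counit identity—geometrically the statement that ``gluing back'' a thickened bigon to $M^\circ$ recovers $M$—produces the identity on skein elements rather than just up to a scalar.
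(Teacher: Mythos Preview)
Your proposal is correct and follows essentially the same approach as the paper: coassociativity is obtained from the commutativity of two parallel disk cuts (Theorem~\ref{thm.gluing}), exactly as the paper does with its pair $(D,c')$ and $(D',c'')$. The paper is in fact terser than you are---it refers to \cite{CL} for the surface case and does not spell out the counitality argument at all---so your explicit verification via the state-sum formula and the counit values \eqref{eq.epsilon} is a welcome addition, and your caution about tracking the $\inv_{e_l}$ factors is well placed, as the paper leaves that bookkeeping implicit.
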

\begin{proof}
The proof is similar to the proof of coassociativity of the coaction for the case of boundary oriented surfaces given in \cite{CL}. 
If $D'\subset M$ is another properly embedded disc parallel to $D$ and we let $c''\subset D'$ be an oriented edge parallel to $c'$, then by the commutativity statement of Theorem \ref{thm.gluing}, we get the associativity of the coaction: $$\inv_{d''}\circ \inv_{d'}\circ \Cut_{D',c''}\circ \Cut_{D,c'}=\inv_{d''}\circ \inv_{d'}\circ \Cut_{D,c'}\circ \Cut_{D',c''}$$ where we let $d'$ (resp. $d''$) be the copy of $c'$ (resp. $c''$) contained in the component of  $\Cut_{D,c'}$ (resp $\Cut_{D',c''}$) containing $c$.\end{proof}

If instead $c$ is a circle marking, let $N(c)$ be a regular neighbourhood of $c$ in $M$, diffeomorphic to the thickening of the annulus $\AA$ (see Subsection \ref{ex:annulus}) via an orientation preserving diffeomorphism $\psi_+$ (resp. $\psi_-$) such that $\psi_+(c)$ is the positive (resp. negative) boundary component of $\AA$. Let $\cN''$ be a marking on $N(c)$ given by $\psi_+^{-1}(\partial \AA\times \{0\})$ (resp. $\psi_-^{-1}(\partial \AA\times \{0\})$).

As above, there exists a diffeomorphism $\phi:M'=M\setminus int(N(c))\to M$ which is the identity out of a regular neighbourhood of $N(c)$, unique up to isotopy. Letting $\cN'=\phi^{-1}(\cN)$ and $c'=\phi^{-1}(c)$, we can then identify $\cSs(M',\cN')$ and $\cSs(M,\cN)$ via $\phi_*$.
Let then $D\subset M$ be a properly embedded annulus containing $c'$; applying Theorem \ref{thm.gluing} we  then define
$$\Cut_{D,c'}:\cSs(M,\cN)\to \cSs(M',\cN')\otimes_\cR\cSs(N(c),\cN'')=\cSs(M,\cN)\otimes_\cR \cSs(N(c),\cN'')$$
where the second equality is induced by $\phi_*\otimes Id_{\cSs(N(c),\cN'')}$. 
By Proposition \ref{prop:circlecomodule} we have $\cSs(N(c), \cN'')={\rm HH}_0(\OSL)$. Therefore we get a right ${\rm HH}_0(\OSL)$-comodule structure on $\cSs(M,\cN)$ via:
 $$\Delta_R=(\Id_{\cSs(M,\cN)}\otimes (\psi_+)_*) \circ \Delta$$ 
 or a left ${\rm HH}_0(\OSL)$-comodule structure via:
 $$\Delta_L=((\psi_-)_*\otimes \Id_{\cSs(M,\cN)} ) \circ \tau \circ \Delta$$
 where $\tau(x\otimes y)=y\otimes x$.

\begin{proposition}
$$\Delta_R:\cSs(M,\cN)\to \cSs(M,\cN)\otimes \cSs(\AA\times [-1,1],\partial \AA\times \{0\})$$
is a right comodule structure. 
Similarly 
$$\Delta_L:\cSs(M,\cN)\to \cSs(\AA\times [-1,1],\partial \AA\times \{0\})\otimes \cSs(M,\cN)$$
 is a left comodule structure. 
\end{proposition}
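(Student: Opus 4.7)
The plan is to follow essentially the same strategy used in the proof of Proposition \ref{prop:comodule} for arc markings, adapting it to the circle case by replacing the role of $\cSs(\PP_2)=\OSL$ by $\cSs(\AA\times[-1,1],\partial\AA\times\{0\})={\rm HH}_0(\OSL)$ (the identification being Proposition \ref{prop:circlecomodule}). Only the coassociativity and counit axioms need to be verified, since $\Delta_R$ and $\Delta_L$ are by construction $\cR$-linear maps with the correct target.

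For coassociativity of $\Delta_R$, I would pick a second properly embedded annulus $D'\subset M$ which is parallel to $D$ and lies strictly between $D$ and the marking $c$, and let $c''\subset D'$ be an oriented circle parallel to $c'$. Applying Theorem \ref{thm.gluing} twice produces two cutting homomorphisms $\Cut_{D,c'}$ and $\Cut_{D',c''}$, and the commutation statement in Theorem \ref{thm.gluing} (equation \eqref{eq.commu5} in the surface case) gives
\[
\Cut_{D',c''}\circ\Cut_{D,c'}=\Cut_{D,c'}\circ\Cut_{D',c''}.
\]
Interpreting the piece between $D$ and $D'$ as a second thickened annulus with its two circle markings, this equality translates, after applying the identifications $\phi_*$ and $(\psi_\pm)_*$, into the coassociativity diagram
\[
(\Delta_R\otimes\Id)\circ\Delta_R=(\Id\otimes\Delta)\circ\Delta_R,
\]
where $\Delta$ is the coproduct of ${\rm HH}_0(\OSL)$ inherited from that of $\OSL$ (which descends, as was observed in Subsection \ref{ex:annulus}, since the commutator relation is a coideal).

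For the counit axiom, I would use the explicit description of $\epsilon$ on ${\rm HH}_0(\OSL)$ pulled back from $\OSL$ via \eqref{eq.epsilon}: on a stated arc $\alpha(\vec\eta,\vec\nu)$ of parallel strands connecting the two boundary circles of $\bA$, one has $\epsilon(\alpha(\vec\eta,\vec\nu))=\delta_{\vec\eta,\vec\nu}$, exactly as in the proof of Proposition \ref{prop:circlecomodule}. Applying $(\Id\otimes\epsilon)$ to $\Delta_R(x)$ for $x$ represented by an $\cN$-tangle transverse to $D$ then collapses the state sum \eqref{eq.cut0} to the single term where both lifts carry identical states, which by the definition of $\Cut_{D,c'}$ and the inversion isomorphism $\inv_{e_l}$ recovers $x$. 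The argument for $\Delta_L$ is symmetric, using $\tau$ and $\psi_-$.

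The main obstacle I expect is bookkeeping rather than substance: one must carefully track the inversion and identification maps $(\psi_\pm)_*$ to guarantee that the coalgebra structure on $\cSs(N(c),\cN'')$ used on the right really is the one on ${\rm HH}_0(\OSL)$ described in Subsection \ref{ex:annulus}, and that the orientation conventions on $c$ match up to give a \emph{right} (resp.\ \emph{left}) comodule structure. Once these identifications are set, coassociativity is a direct corollary of \eqref{eq.commu5} applied to the two parallel annuli, and the counit property reduces to the single formula $\epsilon(\alpha(\vec\eta,\vec\nu))=\delta_{\vec\eta,\vec\nu}$.
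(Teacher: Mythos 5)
Your proposal is correct and follows essentially the same strategy as the paper: coassociativity is obtained from the commutation of the cutting homomorphisms along two parallel embedded annuli, as guaranteed by Theorem~\ref{thm.gluing}. The paper's proof is terser and only records the coassociativity step, leaving the counit axiom implicit (it was established in the same way in Proposition~\ref{prop:circlecomodule}); you spell out the counit verification explicitly, which is a small but welcome addition.
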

\begin{proof}
If $D'\subset M$ is another properly embedded annulus parallel to $D$ and we let $c''\subset D'$ be its core oriented as $c'$, then by the commutativity statement of Theorem \ref{thm.gluing}, we get $$\Cut_{D',c''}\circ \Cut_{D,c'}=\Cut_{D,c'}\circ \Cut_{D',c''}$$ which proves co-associativity.\end{proof}

\begin{remark}\label{rem:multicomodule}
If $c,c'\in \cN$ are distinct markings in $M$ then the associated comodule structures commute to each other. 
\end{remark}

\subsection{Module structure of $\cS(M,\cN)$}\label{def:bimodules}
Let $(\Sigma,\P)$ be a marked surface and let $\phi:{\Sigma}\to \partial M$ be an embedding; we will say that the sign of $\phi$ is $+1$ if $\phi$ is orientation preserving and $-1$ else. Recall that by hypothesis each edge of $\cN$ is the image of $(-1,1)$ through an embedding of $[-1,1]$ in $\partial M$; therefore we will talk of ``target'' of the edge $c$ (the image of $\{1\}$) and of its source (the image of $\{-1\}$) and we will denote $\overline{c}$ and $\overline{\cN}$ the closures respectively of $c$ and of $\cN$ in $\partial M$. 

Suppose that $\overline{\cN}\subset \partial M$ is such that $\overline{\cN}\cap \phi(\Sigma)=\phi(\P)$ and that for each $p\in \P$, if $c\in \cN$ is the component such that $\overline{c}\cap\phi(\Sigma)=\{p\}$ then $p$ is the target of $c$ if $\sign(p)\sign(\phi)=1$ and it is the source of $c$ if $\sign(p)\sign(\phi)=-1$ (here 
$\sign(p)$ is the sign of the component of $\fS$ containing it).

Then a regular neighbourhood of $\phi(\Sigma)$ in $M$ is diffeomorphic to $(\Sigma\times [-1,1],\P\times[-1,1])$ : let $i:(\Sigma\times [-1,1],\P\times[-1,1])\to (M,\cN)$ the embedding. 
Furthermore there is a diffeomorphism $\psi:M\to M\setminus int(i(\Sigma\times [-1,1]))$ isotopic to the identity of $M$.
 
We can define a left action (resp. a right action) of $\alpha\in  \cS(\Sigma,\P)$ on $m\in \cS(M,\cN)$ as $$\alpha\cdot m:=[i(\alpha) \sqcup \psi(m)]\  \rm{(resp.} \ m\cdot \alpha:=[\psi(m)\sqcup i(\alpha)])$$ 
where $[x]$ denotes the class in $\cSs(M,\cN)$ of the stated tangle $x$. 
The proof of the following proposition is straightforward and left to the reader: 
\begin{proposition}\label{prop:module}
The above defined structure endows $\cS(M,\cN)$ with the structure of a left module (resp. right module) over $\cS(\Sigma,\P)$.
\end{proposition}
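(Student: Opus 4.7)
I would verify in turn that the proposed actions are (i) well-defined on $\cSs(M,\cN)$, (ii) bilinear and compatible with the defining skein relations, (iii) associative, and (iv) unital. I focus on the left action; the right action case is entirely symmetric, with the roles of the two halves of $\Sigma\times[-1,1]$ swapped.

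For well-definedness, I observe that $\psi$ is isotopic to $\Id_M$, so $\psi_*$ acts as the identity on $\cSs(M,\cN)$. Thus the action may be rewritten as $\alpha\cdot m=[i(\alpha)\sqcup\tilde m]$ for any representative $\tilde m$ of $m$ whose support lies in $M\setminus i(\Sigma\times[-1,1])$; such a representative exists because any tangle can be isotoped off a collar of $\phi(\Sigma)$. The sign-compatibility hypothesis on $\phi$ stated in the setup guarantees that the endpoints of $i(\alpha)$ lying on $i(\P\times(-1,1))$ are contained in $\cN$ with matching orientations, so $i(\alpha)\sqcup\tilde m$ is a genuine stated $\cN$-tangle. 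Bilinearity and invariance under the defining skein relations \eqref{eq.skein0}--\eqref{eq.order0} then follow because each of these relations is local and is supported either entirely inside $i(\Sigma\times[-1,1])$, where it holds by definition of $\cSs(\Sigma,\P)$, or entirely outside, where it holds in $\cSs(M,\cN)$.

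The main step is associativity, $(\alpha\beta)\cdot m=\alpha\cdot(\beta\cdot m)$. The algebra structure on $\cSs(\Sigma,\P)$ comes from vertical stacking in the thickening $\Sigma\times(-1,1)$, so I represent $\alpha$ by a tangle contained in $\Sigma\times(0,1)$ and $\beta$ by one in $\Sigma\times(-1,0)$, with $\alpha\beta=[\alpha\cup\beta]$. Picking $\tilde m$ with support in $M\setminus i(\Sigma\times[-1,1])$, one side reads $(\alpha\beta)\cdot m=[i(\alpha)\sqcup i(\beta)\sqcup\tilde m]$. For the other side, $\beta\cdot m$ is represented by $i(\beta)\sqcup\tilde m$, and to compute $\alpha\cdot(\beta\cdot m)$ I would apply the action a second time using a smaller regular neighborhood $i'(\Sigma\times[-1,1])\subset i(\Sigma\times[0,1])$ disjoint from $i(\beta)\sqcup\tilde m$, equipped with its own identification; this places $i'(\alpha)$ in the new neighborhood and yields $[i'(\alpha)\sqcup i(\beta)\sqcup\tilde m]$, which agrees with $[i(\alpha)\sqcup i(\beta)\sqcup\tilde m]$ because inside $i(\Sigma\times[-1,1])$ the two tangles together represent exactly the stacking product $\alpha\beta$.

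The unit axiom is immediate: $\emptyset\cdot m=[\emptyset\sqcup\psi(m)]=[\psi(m)]=[m]$, again because $\psi$ is isotopic to $\Id_M$. The principal obstacle I anticipate is the precise bookkeeping for the nested regular neighborhoods in the associativity check, together with the verification that the sign conventions for the orientation of markings propagate correctly through the two embeddings. Both reduce to the uniqueness-up-to-isotopy of tubular neighborhoods of $\phi(\Sigma)$ in $M$ and the fact that any diffeomorphism of $M$ isotopic to the identity acts trivially on $\cSs(M,\cN)$, so all auxiliary choices collapse at the level of the skein module.
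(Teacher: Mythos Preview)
Your proof is correct and provides exactly the routine verification the paper omits: the paper itself declares the proposition ``straightforward and left to the reader'' and gives no argument at all. Your checks of well-definedness via $\psi\simeq\Id_M$, locality of the skein relations, associativity via nested collars, and unitality are the natural steps one would supply, and nothing is missing.
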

Furthermore recall that for each edge $p\in \P$ the algebra $\cSs(\Sigma,\P)$ is also a $\OSL$-comodule algebra; it is not difficult to prove that the above result actually holds in the category of $\OSL$-comodules, namely that for each $e\in \cN$ if $\cSs(M,\cN)$ and $\cSs(\Sigma,\P)$ are endowed with the right $\OSL$-comodule structure associated to $e$ (resp. $e\cap \Sigma$) then it holds :
$$\Delta_e(\alpha\cdot m)=\Delta_e(\alpha)\cdot \Delta_e(m)\  \rm{(resp.}\ \Delta_e(m\cdot \alpha)=\Delta_e(m)\cdot \Delta_e(\alpha) )$$
where in the right hand side of the equalities $\cdot$ stands for the tensor product of the action and of the product in $\OSL$. 
\begin{remark}
If the set of edges of $\cN$ is ordered, then using Remark \ref{rem:multicomodule} one has actually a comodule algebra structure over a suitable tensor power of $\OSL$ depending on the comodule structure defined in $\cS(M,\cN)$. 
\end{remark}

\subsection{Sphere lemma}
Suppose that one component of $\partial M$ is a sphere endowed with a single oriented arc $e\in \cN$, and let $\cS_0(M,\cN)$ be the sub $\cR$-module generated stated skeins represented by arcs not intersecting $e$. Let also $\hat{M}$ be obtained by filling $M$ with a ball $B^3$ along that boundary component and $\hat{\cN}=\cN\setminus e\subset \partial \hat{M}$. Let ${\cR}^{loc}$ be the ring obtained by localising $\cR$ by the multiplicative set generated by $\mathbb{Z}\setminus \{0\}\cup \{1-q^{2n+4},n\geq 1\}$. 
\begin{proposition}[Sphere lemma]\label{prop:sphere}
$$\cR^{loc}\otimes \cS(M,\cN)=\cR^{loc}\otimes \cS_0(M,\cN)=\cR^{loc}\otimes \cS(\hat{M},\hat{\cN}).$$
\end{proposition}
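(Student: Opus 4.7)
My plan is to prove the two equalities of Proposition~\ref{prop:sphere} in turn, treating the second as the foundational one.

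For the second equality $\cR^{loc}\otimes \cS_0(M,\cN) = \cR^{loc}\otimes \cS(\hat M, \hat\cN)$, I first identify $\cS_0(M,\cN)$ with $\cS(M, \cN \setminus e)$. This holds already over $\cR$: the only skein relation mentioning a marking is the cup relation \eqref{eq.arcs0}, and it requires endpoints on that marking, so restricting to tangles disjoint from $e$ yields identical relations whether or not $e$ is kept as a marking. The inclusion $M\hookrightarrow \hat M$ then induces a natural map $\psi\colon \cS(M,\cN\setminus e) \to \cS(\hat M, \hat\cN)$, which is surjective because any tangle in $\hat M$ meets $B^3$ only in arcs with endpoints on $S$ (since $\hat\cN\cap B^3 = \emptyset$), and these can be isotoped onto $S$ and then pushed into the interior of $M$. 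For injectivity after localization I would analyse the kernel of $\psi$ as generated by ``sphere-slide'' relations — pairs of tangles in $M$ related by an isotopy in $\hat M$ passing through $B^3$. Applying Theorem~\ref{thm.gluing} to cut $\hat M$ along the boundary of a slightly enlarged ball $B' \supset B^3$ reduces this to a computation in the skein module of the shell $B'\setminus B^3 \cong S^2 \times I$, and the Hoste--Przytycki calculation \eqref{eq.HP} shows the kernel is supported in the torsion $\bigoplus_{i\geq 1} \cR/(1-q^{2i+4})$, which vanishes in $\cR^{loc}$.

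For the first equality $\cR^{loc}\otimes \cS(M,\cN) = \cR^{loc}\otimes \cS_0(M,\cN)$, I proceed by induction on the number $k$ of endpoints of a stated tangle $\alpha$ on $e$. The base case $k=0$ is trivial. For the inductive step with $k\geq 2$, choose two consecutive endpoints $p_1, p_2$ along $e$ and apply the reorder relation \eqref{eq.order0} repeatedly to arrange the two associated strands into a parallel configuration with opposite states. Because $e$ lies on a spherical boundary component $S$, the two strands can then be slid into a short cup above $e$ through a tubular neighborhood of $S$ regarded inside $\hat M$; the already-established second equality guarantees that such a modification is trivial in $\cR^{loc}\otimes \cS(M,\cN)$. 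Applying the cup relation \eqref{eq.arcs0} eliminates the pair and reduces $k$ by two. The case of a leftover endpoint with no matching partner is handled by first creating a complementary state using the $\OSL$-coaction at $e$ supplied by Proposition~\ref{prop:comodule} and the antipode of $\OSL$.

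The main obstacle will be the explicit sphere-slide analysis required to control the kernel of $\psi$. The crux is showing that the only obstruction to injectivity comes from torsion in $\cS(S^2\times S^1)$ via \eqref{eq.HP}, which is exactly what $\cR^{loc}$ inverts; a clean implementation uses the cutting homomorphism \ref{thm.gluing} to produce an explicit presentation of sphere-slide relations inside the shell $S^2\times I$. A further subtlety is that the two equalities are interdependent at the level of their proofs: the sphere-sliding modification used in the inductive step of the first equality appeals to the second equality applied locally to a ball near $e$, so the proofs must be arranged in the order indicated.
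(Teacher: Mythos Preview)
Your approach misses the elementary ``sphere trick'' that is the entire content of the paper's argument, and the substitute you propose for the first equality has a genuine gap.

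\textbf{Second equality.} The paper dispatches this in one line, and it already holds over $\cR$: tangles and skein relations avoiding the sphere $S=\partial B^3$ are literally the same in $M$ and in $\hat M$, and any isotopy in $\hat M$ between such tangles can be pushed off a point of $B^3$ and hence into $M$. Your route via cutting along a shell and invoking the Hoste--Przytycki computation \eqref{eq.HP} is a large detour; moreover, \eqref{eq.HP} computes $\cS(S^2\times S^1)$, not the skein module of the shell $S^2\times I$, so it does not directly present the kernel you need. Your preliminary identification $\cS_0(M,\cN)\cong\cS(M,\cN\setminus e)$ is also not justified by ``the relations are local'': $\cS_0$ is by definition a \emph{submodule} of $\cS(M,\cN)$, hence a quotient of $\cS(M,\cN\setminus e)$, and locality of relations does not by itself give injectivity.

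\textbf{First equality.} Here is the real problem. The paper's argument stays entirely inside $M$: take a stated tangle $\alpha$ with $n$ endpoints on $e$, slide one of its strands once around the sphere $S$ (an isotopy in $M$, along $\partial M$), and resolve the resulting $2(n-1)$ crossings together with the framing twist. The top term reproduces $\alpha$ with coefficient $q^{6}q^{2(n-1)}=q^{2n+4}$, yielding $(1-q^{2n+4})\alpha\in(\text{fewer endpoints})$. This is exactly where the localisation by $1-q^{2n+4}$ enters, and induction on $n$ finishes.

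Your inductive step, by contrast, proposes to slide two adjacent strands ``through a tubular neighbourhood of $S$ regarded inside $\hat M$'' to form a cup, and then invoke the second equality. But the second equality you have established concerns only $\cS_0$, i.e.\ tangles \emph{disjoint from $e$}; it says nothing about isotopies in $\hat M$ of tangles that still carry endpoints on $e$. So the move you perform is not known to be a relation in $\cR^{loc}\otimes\cS(M,\cN)$. More tellingly, nowhere in your reduction does the factor $1-q^{2n+4}$ appear; if your pairing-and-cupping procedure worked as described, it would prove the first equality over $\cR$ itself, which is false in general (this is the whole point of the sphere lemma). The odd-endpoint fix via the coaction and antipode is likewise only a sketch. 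The correct mechanism is the sphere trick above: an isotopy \emph{within} $M$ that returns to the original diagram up to crossings, producing the needed scalar obstruction.
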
 
\begin{proof}
The second equality is clear as isotopy in $\hat{M}$ is the same as isotopy in $M$ for arcs not touching $\partial B^3$, and we note here that it holds over $\cR$.
To prove the first equality we apply the standard sphere trick: if $\alpha$ is a stated skein such that $\alpha\cap \partial B^3$ has $n$ points then we can isotope one strand of $\alpha$ around $\partial B^3$ so to get the equality of Figure \ref{fig:sphere}.
Then applying relations \eqref{eq.skein0} to all the crossings and then \eqref{eq.arcs0} we have the following equality: $\alpha=+q^{6}\cdot q^{2(n-1)} \alpha+ l.o.t.$ where $l.o.t$ stands for a linear combination of skeins whose intersection with $\partial B^3$ has less than $n$ points.
\begin{figure}
\raisebox{-2cm}{\includegraphics[width=5cm]{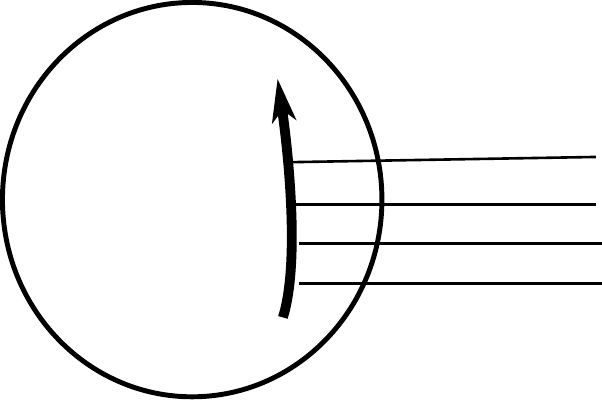}}
$\longleftrightarrow $\raisebox{-2cm}{\includegraphics[width=5cm]{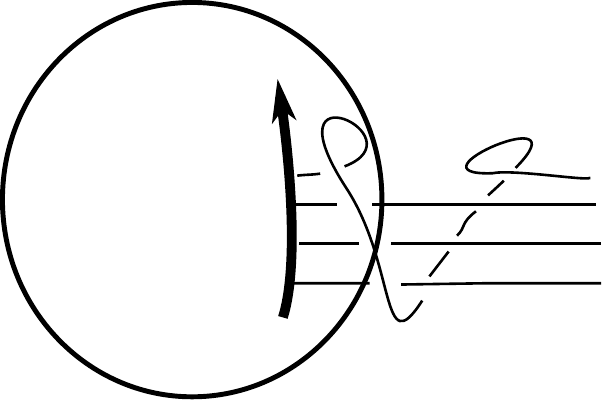}}
\caption{Isotopy of one strand around the sphere in the case $n=4$. }\label{fig:sphere}
\end{figure}
As a consequence since $1-q^{2n+4}$ is invertible in $\cR^{loc}$ we can express $\alpha$ a linear combination of skeins with lower intersection with $B$. 
Arguing by induction on this number of intersections we prove that each skein can be represented as a linear combination of skeins not intersecting $B$. 
\end{proof}

\def\MM{{\mathbf M}}
\def\Sl{{\mathsf{Sl}}}
\def\boSi{{\boldsymbol{\Sigma}}}
\section{Splitting along a strict surface}\label{sec:splitting} 

Suppose $\MM =\MN$ is a marked 3-manifold. A {\em strict subsurface $\Sigma$} of $\MM$ is a proper embedding $\Sigma\embed M$ of a compact surface (so that $\partial \Sigma\subset \partial M$),
$\Sigma$ is traversal to $\cN$ and every connected component of $\Sigma$ intersects $\cN$. Define the slit $\Sl_\Sigma(\MM):= (M', \cN')$, where $M'= M \setminus \Sigma$ and $\cN'= \cN \setminus \Sigma$. For a point $p\in \cP:= \Sigma\cap \cN$ define its sign to be $+$ or $-$ according as  the orientation of $M$ is equal the orientation of $\Sigma$ followed by the orientation of the tangent to $\cN$ at $p$ or not.   
Then $\boSi= (\Sigma, \cP)$ is a marked surface and
there is a right and a left action of $\cS(\boSi)$ on $\cS(\Sl_\Sigma(\MM))$ defined as follows. 
An obvious compactification  $M'$  near $\Sigma$ gives a  manifold $M''$ having two copies $\Sigma_1$ and $\Sigma_2$ of $\Sigma$ on its boundary such that $M''/(\Sigma_1 = \Sigma)$ is $M$.
There is a left action of $\cS(\Sigma_1, \cP)$ on $\cS(M'', \cN')$ and a right action of $\cS(\Sigma_2, \cP)$ on $\cS(M'', \cN')$. Now identify each of $\cS(\Sigma_1, \cP)$ and $\cS(\Sigma_2, \cP)$ with $\cS(\Sigma, \cP)$, and identify $\cS(M'', \cN')$ with $\cS\MN$  via the embedding $(M', \cN') \embed (M'', \cN')$.

The embedding $\Sl_\Sigma(\MM) \embed \MM$ induces an $\cR$-linear homomorphism $\varphi_\Sigma: \cS(\Sl_\Sigma(\MM)) \to \cS(\MM)$.

\bthm
\label{thm.split3} Assume $\Sigma$ is a strict subsurface of a marked 3-manifold $\MM=\MN$. Then $\cS(\MM)= {\rm HH}_0( \cS(\Sl_\Sigma(\MM))  )$, the $0$-th Hochschild homology of the $\cS(\boSi )$-bimodule $\cS(\Sl_\Sigma(\MM))$.

More precisely the $\cR$-linear map $\varphi_\Sigma: \cS(\Sl_\Sigma(\MM)) \to \cS(\MM)$ is surjective and its kernel is the $\cR$-span of $\{ a*x - x*a \mid x\in \cS(\Sl_\Sigma(\MM)), \ a \in \cS(\boSi)\}$.
\ethm
\bpr 
Clearly the map $\varphi_\Sigma$ descends to an $\cR$-linear map $\bar \varphi_\Sigma: HH_0( \cS(\Sl_\Sigma(\MM))  ) \to \cS(\MM)$. We will show that $\bar \varphi_\Sigma$ is bijective.

Let $D$ be a concrete stated $\cN$-tangle transversal to $\Sigma$. An embedding  $\al: (0,1] \embed \Sigma$ is {\em good with respect to} $D$ if $\al(1)\in \cN \cap \Sigma$ and $\al \cap D =D \cap \Sigma$. We are in a situation similar to that in the proof of Theorem \ref{thm.slit1}.
For such a $\al$ define
$ \tilde j_\al(D) \in \cS(\Sl_\Sigma(\MM))$ by the same formula as in Equation \eqref{eq.cup2}, where $\al$ is the horizontal line and $D$ is the red strands.

Let $j_\al(D)= \tau(\tilde j_\al (D))$, where $\tau:\cS(\Sl_\Sigma(\MM)) \onto {\rm HH}_0(  \cS(\Sl_\Sigma(\MM)))$ is the natural projection. Note that
\beq 
\bar \varphi_\Sigma(\tilde j_\al(D))= [D],
\label{eq.51}
\eeq
 where $[D]\in \cS\MN $ is the element represented by $D$. This shows that $\bar \varphi_\Sigma$ is surjective.

 Let $\Sigma'$ be a parallel copy of $\Sigma$ in $M$ and let $\al': (0,1] \embed \Sigma'$ be another embedding which is  good with respect to $D$. 
Let us show that  
$j_\al(D) = j_{\al'}(D)$.
From the definition of the slitting operations, we have
$$\tilde j_{\al'} (  \tilde j_\al(D))= \tilde j_{\al} (  \tilde j_{\al'}(D)) \ \text{in } \ \cS(\Sl_{\Sigma, \Sigma'}(\MM)).$$
Denote the common value of the above by $x$. Note that 
$$\cS(\Sl_{\Sigma, \Sigma'}(\MM))= \cS(\Sl_{\Sigma}(\MM)) \ot \cS(\boSi) = \cS(\boSi) \ot \cS(\Sl_{\Sigma}(\MM)). $$
Using \eqref{eq.51}, we have
$$ j_\al(D) = \tau ( *_r(x)),$$
where $*_r:  \cS(\Sl_{\Sigma}(\MM)) \ot \cS(\boSi)  \to \cS(\Sl_{\Sigma}(\MM))$ is the right action. Similarly
$$ j_{\al'}(D) = \tau ( *_l(x)),$$
$*_l: \cS(\boSi) \ot \cS(\Sl_{\Sigma}(\MM)) \to \cS(\Sl_{\Sigma}(\MM))$ is the left action. Hence, as elements of ${\rm HH}_0( \cS(\Sl_{\Sigma}(\MM)) )$ we have  $j_{\al}(D)=  j_{\al'}(D)$, and we denote this common value by $j(D)$.

Let us show that $j(D)$ depends only on the the isotopy class of $D$. Clearly an isotopy whose support does not intersect $\Sigma$ does not change the values of $j(D)$. In a small neighbourhood of $\Sigma$ an isotopy of $D$ is a finite composition of moves M1 and M2 described in Figure \ref{fig:slit3}, where the horizontal line stands for $\Sigma$. The invariance of $j(D)$ under M1 and M2 was already proved in the proof of  Theorem \ref{thm.slit1}.

All the defining relations of the skein module can be assumed to be away from $\Sigma$. Hence $j: \cS(\MM) \to {\rm HH}_0( \cS(\Sl_{\Sigma}(\MM)) )$ is well-defined. 
By definition $ j \circ \bar \varphi_\Sigma(D) = D$, since if $D$ is a stated $\cN'$-tangle in $M'$ then it does not intersect $\Sigma$. It follows that $\bar \varphi$ is injective, and whence bijective.
\epr

\bexa\label{ex:halfslitrecover} Consider the special case when $\Sigma=D$ is a disk which intersects $\cN$ at one point. In this case Theorem \ref{thm.split3} recovers Theorem \ref{thm.slit1} about the half-ideal splitting of a surface. \eexa

\bexa \label{ex:cptslitrecover}Consider the special case when $\Sigma=D$ is a disk which intersects $\cN$ at two positive points. In this case  Theorem \ref{thm.split3} recovers Theorem \ref{thm.slit2} about the compact spitting of a surface. 
\eexa

\bexa\label{ex:trianglesum}[Triangle sum of marked manifolds]
Let $\boP_3$ be the disc with three marked positive points in its boundary and ${\bf B}$ be its thickening, whose marked edges we denote $e_0,e_1,e_2$.  Let $\Sigma\subset {\bf B}$ be a properly embedded disc intersecting once transversally $e_0$ and such that $e_1$ and $e_2$  are in two distinct connected components of $\Sl_{\Sigma}({\bf B})$. 
Let $\MM'$ be a marked three manifold with at least two edge markings $e'_1,e'_2\in \cN$ and let $\MM$ be obtained by glueing $\MM'$ and ${\bf B}$ by identifying disc neighbourhoods of $e'_i$ and $e_i, i=1,2$ in $\partial \MM'$ and $\partial {\bf B}$. We will say that $\MM$ is obtained by operating a self triangle sum of $\MM'$ (if $\MM'=\MM_1\sqcup \MM_2$ with $e'_i\in \MM_i$ this corresponds to glueing $\MM_1$ and $\MM_2$ to a same ball, whence the name of the operation). 

By Theorem \ref{thm.split3} we get that $\cSs(\MM)=\cSs(\MM')$ as $\cR$-modules: indeed it is sufficient to remark that $\cSs(\Sigma)=\cR$ and that $\Sl_{\Sigma}(\MM)$ is diffeomorphic to $\MM'$. 
\eexa

The conclusion of Example \ref{ex:trianglesum} can be refined by observing that $\cSs(\MM')$ is a right $\OSL^{\otimes 2}$-comodule by the right coaction $\Delta=(\Delta_{1}\otimes Id_{\OSL})\circ  \Delta_2$ where $\Delta_i$ is the right coaction associated to edge $e'_i$ as explained in Proposition \ref{prop:comodule}. 
Then we can endow $\cSs(\MM')$ with the structure of a right $\OSL$-comodule via $\Delta'=(Id_{\cSs(\MM')}\otimes m)\circ \Delta$ where $m:\OSL^{\otimes 2}\to \OSL$ is the product. 
Then the following holds:
\begin{theorem}[Triangle sum of marked manifolds]\label{thm:3Dtrianglesum}
The inclusion of $\MM'$ in $\MM$ induces the following isomorphism of right $\OSL$-comodules:
$$\cSs(\MM)=\cSs(\MM').$$
In particular if $\MM'$ is the disjoint union of marked manifolds $\MM_1$ and $\MM_2$ containing respectively $e'_1$ and $e'_2$ then the following isomorphism of  right $\OSL$-comodules holds:
$$\cSs(\MM)=\cSs(\MM_1)\otimes_\cR \cSs(\MM_2).$$
\end{theorem}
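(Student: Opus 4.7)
The plan is to proceed in two stages: first establish the $\cR$-linear isomorphism $\Phi\colon\cSs(\MM)\xrightarrow{\sim}\cSs(\MM')$, then verify that it intertwines the two $\OSL$-coactions. For the first stage I invoke Theorem \ref{thm.split3} with $\Sigma\subset\mathbf{B}\subset\MM$ the separating disc, exactly as in Example \ref{ex:trianglesum}: since $\Sigma$ meets $\cN$ only at the single point on $e_0$, the marked surface $\boSi$ is the monogon and $\cSs(\boSi)=\cR$, so the Hochschild homology is trivial and $\cSs(\MM)=\cSs(\Sl_\Sigma(\MM))$. The slit $\Sl_\Sigma(\MM)$ is diffeomorphic as a marked $3$-manifold to $\MM'$: each component of $\mathbf{B}\setminus\Sigma$ is a $3$-ball attached to $\MM'$ along the interface disc where $e_i=e'_i$, and absorbing it as a collar carries its half of $e_0$ onto $e'_i$ up to isotopy.

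For the second stage I compare the two coactions by choosing compatible geometric representatives inside $\mathbf{B}$. Pick a disc $D\subset\mathbf{B}$ parallel to $e_0$ cutting off a bigon neighbourhood $N(e_0)$, so that $\Delta_{e_0}$ is realised by the cut $\Cut_{D,e_0}$ of Proposition \ref{prop:comodule}; arrange $D$ to meet $\Sigma$ transversely in a single arc $\ell$, and arrange the identification $N(e_0)\cong\boP_2\times I$ so that $\Sigma\cap N(e_0)$ is transverse to both marked edges of $N(e_0)$. Then $\ell$ splits $D$ into half-discs $D_+,D_-$, and $\Sigma\cap N(e_0)$ slits $N(e_0)$ into two thickened bigons $B_+,B_-$. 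Under the diffeomorphism $\Sl_\Sigma(\MM)\cong\MM'$, the discs $D_\pm$ correspond to cutting discs realising $\Delta_1,\Delta_2$ at $e'_1,e'_2$, and $B_\pm$ to the bigon neighbourhoods of $e'_1,e'_2$.

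For a stated tangle $\alpha$ in general position, $(\Phi\otimes\id)\circ\Delta_{e_0}(\alpha)$ amounts to cutting $\alpha$ along $D$ (producing a single $\OSL$-factor supported on $N(e_0)$) and slitting the manifold factor along $\Sigma\setminus N(e_0)$; conversely, $\Delta'\circ\Phi(\alpha)$ slits $\alpha$ along the whole of $\Sigma$, cuts along $D_+$ and $D_-$ (producing two $\OSL$-factors supported on $B_\pm$), and multiplies them in $\OSL$. Since cuts and slits commute when supported on disjoint loci, the manifold factors of the two outputs coincide automatically, and the entire equality $(\Phi\otimes\id)\circ\Delta_{e_0}=\Delta'\circ\Phi$ reduces to a purely local identity inside $N(e_0)$: that the $\OSL$-factor produced by the single cut $\Cut_{D,e_0}$ equals the product in $\OSL$ of the two $\OSL$-factors produced by $\Cut_{D_\pm,e_0^\pm}$ after first slitting by $\Sigma\cap N(e_0)$. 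This is the expected main obstacle: it requires choosing the identifications $B_\pm\cong\boP_2\times I$ (together with the maps $\inv_{e_l}$ and $\psi_\pm$ of Proposition \ref{prop:comodule}) consistently with the identification $N(e_0)\cong\boP_2\times I$ underlying $\Delta_{e_0}$, so that the gluing along $\Sigma\cap N(e_0)$ realises precisely the stacking product on $\cSs(\boP_2)$ from Section \ref{sec.basis0}, which by definition is the multiplication of $\OSL$, rather than an opposite or twisted variant; this amounts to careful but purely local bookkeeping of orientations and framings inside $\mathbf{B}$. Once it is settled, the disjoint-union corollary is immediate since $\cSs(\MM_1\sqcup\MM_2)=\cSs(\MM_1)\otimes_\cR\cSs(\MM_2)$ and both right $\OSL$-comodule structures descend compatibly to the tensor factors.
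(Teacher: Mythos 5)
Your proposal takes the same overall route as the paper: the $\cR$-linear isomorphism comes from Theorem~\ref{thm.split3} via Example~\ref{ex:trianglesum}, and then one compares the coaction on $\cSs(\MM)$ at $e_0$ with $\Delta'=(\id\otimes m)\circ(\Delta_1\otimes\id)\circ\Delta_2$ on $\cSs(\MM')$ by positioning cutting discs inside $\mathbf{B}$. However, the step you flag as ``the expected main obstacle'' and defer as ``careful but purely local bookkeeping of orientations and framings'' is precisely where the actual content of the proof lies, and you do not resolve it. As written, you have not ruled out that the gluing along $\Sigma\cap N(e_0)$ produces the opposite product $\alpha_2\alpha_1$ instead of $\alpha_1\alpha_2$; since $\OSL$ is noncommutative, this distinction is essential.

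The paper settles this with a concrete normalisation that your write-up is missing: one chooses the labelling of the edges of $\mathbf{B}$ so that $e_1$ and the \emph{target} of the oriented edge $e_0$ lie in the same component of $\Sl_\Sigma(\mathbf{B})$. With this convention, if $\Delta(\alpha)=\alpha_0\otimes\alpha_1\otimes\alpha_2$ (with $\alpha_1$ the $\OSL$-factor at $e'_1$ and $\alpha_2$ at $e'_2$), then inside the bigon cut out to define $\Delta_{\MM}$ the endpoints of the strands contributing to $\alpha_1$ sit strictly closer to the target of $e_0$ than those contributing to $\alpha_2$, hence are higher in the stacking order; so the bigon factor of $\Delta_{\MM}(i_*(\alpha))$ is exactly $\alpha_1\alpha_2$ as defined by the stacking product on $\cSs(\boP_2)$. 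To complete your argument you need to make (and use) this choice of labelling explicitly and carry out the height comparison; otherwise the claimed ``local identity'' is only asserted, not proven.
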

\begin{proof}
We adopt the notation of Example \ref{ex:trianglesum} and,  up to renaming the marked edges of ${\bf B}$, suppose that $e_1$ and the target of $e_0$ (recall that each $e_i$ is oriented) lie in the same component of $\Sl_{\Sigma}({\bf B})$. Since by Example \ref{ex:trianglesum} we already know  that the map $i:\Sl_{\Sigma}(\MM)\hookrightarrow \MM$ induces an isomorphism of $\cR$-modules, we just need to check that it induces a morphism of right $\OSL$-comodules. 
To see this, observe that if a stated skein $\alpha \subset \Sl_{\Sigma}(\MM)$ has $\Delta(\alpha)=\alpha_0\otimes \alpha_1\otimes \alpha_2\in \cSs(\Sl_{\Sigma}(\MM))\otimes \OSL\otimes \OSL$, then $\Delta'(\alpha)=\alpha_0\otimes (\alpha_1\alpha_2)$ (we suppress sums for clarity); on the other side if we let $\Delta_{\MM}$ be the coaction of $\cSs(\MM)$ then we also immediately see graphically that $\Delta_{\MM}(i_*(\alpha))= i_*(\alpha_0)\otimes \alpha_1\alpha_2$ because all the endpoints of the components in $\alpha_1$ are nearer to the target of $e_0$ and hence higher than the endpoints of $\alpha_2$ in the bigon cut out to define $\Delta_{\MM}$.
\end{proof}

\brem For surface case the triangle sum was discussed in \cite{CL}, where no  circular marking was considered, and the proof used an explicit basis of the stated skein module of surfaces. A proof not using basis was given by Higgins \cite{Higgins} for stated $SL_3$-skein algebra of $SL_3$, and was generalised to $SL_n$-skein modules in \cite{LS}. The proof presented in this paper (only for $SL_2$) is new.
\erem

\section{A stated skein TQFT}

In this section we interpret the stated skein module of marked $3$-manifolds as a monoidal functor from a suitable category of ``decorated cobordisms'' to the category of algebras and their bimodules. In all this section $\cR$ is a fixed ring with a distinguished invertible element $q^{\frac{1}{2}}$. 
\subsection{The category of decorated cobordisms}
Given a marked three manifold $(M,\cN)$, recall that by hypothesis each edge $c$ of $\cN$ is the image of $(-1,1)$ through an embedding of $[-1,1]$ in $\partial M$; therefore we will talk of ``target'' of $c$ (the image of $\{1\}$) and of its source (the image of $\{-1\}$) and we will denote $\overline{c}$ and $\overline{\cN}$ the closures respectively of $c$ and of $\cN$ in $\partial M$. 

\begin{definition}\label{def:decomfd}
A decorated manifold is 5-tuple $\BM=(M,\partial^+M,\partial^-M,\partial ^s M,\cN)$ where:
\begin{enumerate}
\item $M$ is a compact oriented three manifold, 
\item $\partial^s M,\partial^\pm M\subset \partial M$ are compact surfaces with boundary with disjoint interior and oriented as induced by the orientation of $M$, such that $$\partial M=\partial^+M\cup \partial^-M\cup \partial^s M, \qquad {\rm and }\qquad \partial^+M\cap \partial^- M=\emptyset.$$
\item $\cN\subset \partial^sM$ is a finite set of oriented arcs or circles, such that each connected component of $(\partial^\pm M,\partial^\pm M\cap \overline{\cN})$ is a marked surface without circular markings. We define the sign of a marked point (i.e. an element of $\overline{\cN}\cap \partial^\epsilon M,\epsilon \in \{\pm\}$) as $\epsilon$ if the orientation of $\cN$ locally points into $\partial^\epsilon M$ and $-\epsilon$ else.
\end{enumerate}

We will say that a decorated cobordism is ``straight'' if each component of $\overline{\cN}$ intersects both $M_-$ and $M_+$ in its endpoints. 
A diffeomorphism of decorated cobordisms is an orientation preserving diffeomorphism preserving all the above structures.

\end{definition}
\begin{figure}
{\includegraphics[width=8cm]{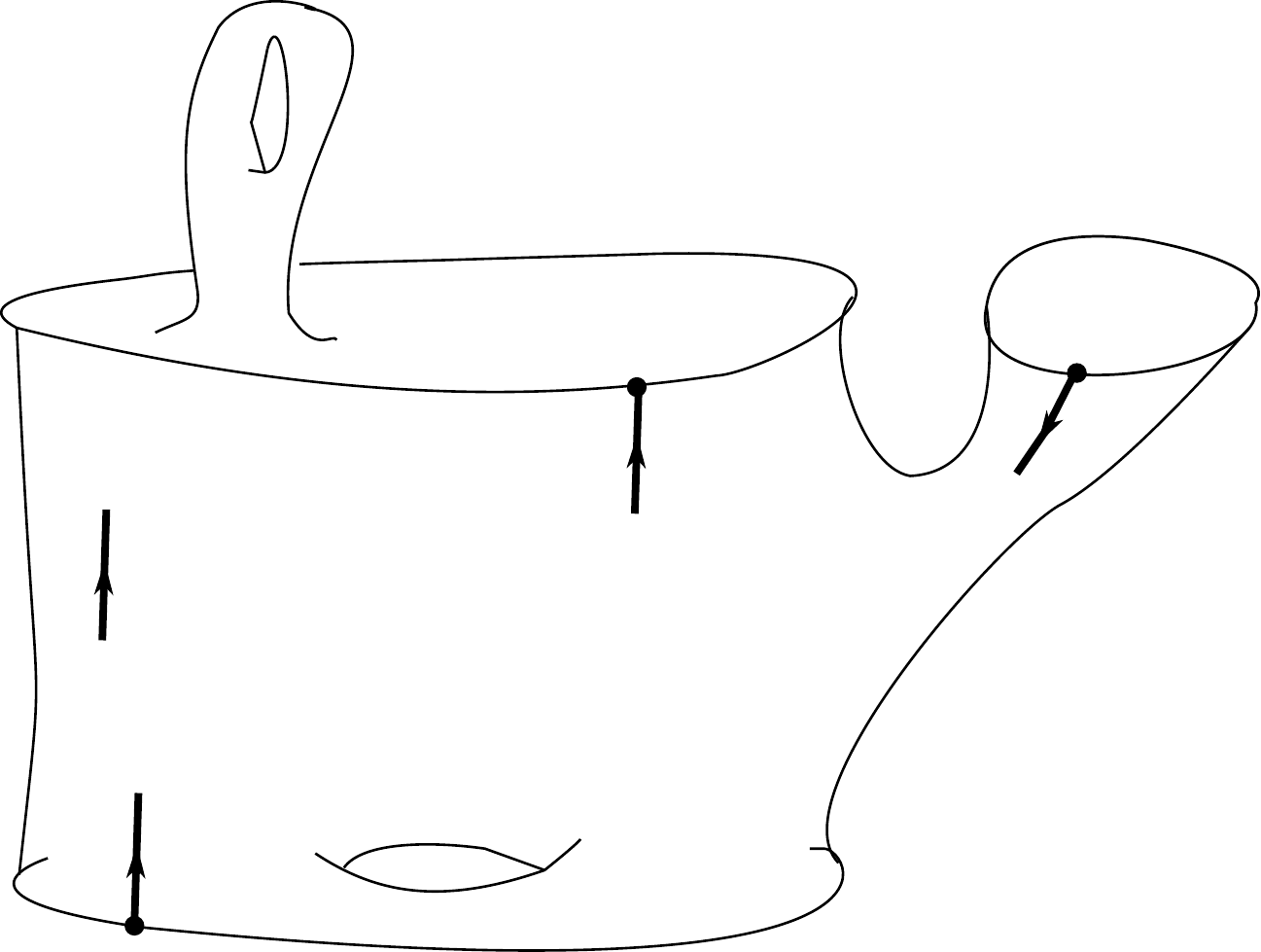}} \put(-205,-5){$+$}\put(-120,106){$+$}\put(-33,106){$-$}\put(-142,115){$\partial_+M$}\put(-90,0){$\partial_-M$}\put(-100,50){$\partial_s M$}
\caption{A decorated cobordism.}
\end{figure}

\begin{remark}\label{rem:nonemptyboundary}
Since the empty is considered to be a marked surface $\partial^\pm M$ can be empty. 
\end{remark}

Associated to each decorated cobordism $\BM$ is an underlying marked three-manifold $(M,\cN)$ and its stated skein module: $\cSs(\BM)=\cSs(M,\cN)$, which is endowed with the natural structure of left module over $\cSs(\partial^+\BM)$ and right module over $\cSs(\partial^-\BM)$.  

\begin{definition}[The category of decorated cobordisms]
$\Dcob$ is the category whose objects are non-empty marked surfaces and $1$-morphisms are described as follows.
A morphism from $\Sigma_-$ to $\Sigma_+$ is the diffeomorphism class of an admissible decorated manifold $\BM$ endowed with diffeomorphisms $\phi_\pm:\partial^\pm \BM\to \Sigma_\pm$ with $\phi_+$ orientation preserving and $\phi_-$ orientation reversing. 
The composition of a morphism $\BM_1:\Sigma_-\to \Sigma$  and $\BM_2:\Sigma\to \Sigma_+$ (with boundary identifications $(\phi_i)_\pm,i=1,2$) is the decorated manifold obtained by glueing $\BM_2$ and $\BM_1$ along $(\phi_2)_-^{-1}\circ (\phi_1)_+$; the arcs of $\overline{\cN}\subset \partial M$ are the images of the arcs of $\overline{\cN}_i,i=1,2$ which do not intersect $\partial^+\BM_1=\partial ^-\BM_2$ and those obtained by glueing the remaining arcs as follows. Let $\overline{c}_1\in \overline{\cN}_1$ be an arc intersecting $\partial ^+\BM_1$ in a point $p$ and let $\overline{c}_2\in \overline{\cN}_2$ be the arc starting from $(\phi_2)_-^{-1}\circ (\phi_1)_+(p)$; by construction the orientations of $\overline{c}_1$ and $\overline{c}_2$ match (they go from $M_1$ to $M_2$ iff $\sign(p)=1$) and thus they define an oriented arc in $\cN\subset \partial^s \BM$.  It can be checked that the so-obtained $\cN$ satisfies the conditions of Definition \ref{def:decomfd}.

\end{definition}

\begin{example}
Let $(\Sigma,\P)$ be a marked surface without circular markings. Then the identity morphism on $(\Sigma,\P)$ is the decorated manifold $\Id_{(\Sigma,\P)}$ with $M=\Sigma\times [-1,1]$, $\partial^\pm M=\Sigma\times\{\pm1\}, \partial^sM=(\partial \Sigma)\times [-1,1]$ and $\cN=\P\times [-1,1]$. 
\end{example}

The category $\Dcob$ is symmetric monoidal with $\otimes$ given by disjoint union. (Actually, as in the standard case of TQFTs, in order to properly define the symmetric monoidal structure one has to consider the category whose objects are surfaces with ordered connected components but we will not detail this point as it is exactly the same as in the standard case.)

Furthermore it is rigid: the dual of a marked surface $\Sigma$ is the surface ${\Sigma}^*$ consisting in $\Sigma$ with the opposite orientation same markings with same signs. The evaluation and co-evaluation morphisms are the morphisms $ev:\Sigma^*\sqcup {\Sigma}\to \emptyset$ and $coev:\emptyset\to \Sigma\sqcup {\Sigma}^*$ represented by the decorated manifold $W=\Sigma\times [0,1]$ with $\partial^-W=\Sigma\times \{0\}\sqcup \Sigma\times \{1\}, \partial^+W=\emptyset$ and $\partial^s W=\overline{\partial W\setminus \partial^-W}$ (respectively $\partial^+W=\Sigma\times \{1\}\sqcup \Sigma\times \{0\}, \partial^-W=\emptyset$ and $\partial^s W=\overline{\partial W\setminus \partial^+W}$). 

In particular the composition $ev_{\Sigma}\circ coev_{{\Sigma}^*}$ (``the quantum trace'') is the decorated manifold $T=\Sigma\times S^1$ with $\partial\pm T=\emptyset$, $\partial^sT=\partial \overline{\Sigma}\times S^1$ and $\cN=\P\times S^1$ where $\P\subset \partial \Sigma$ is the marking of $\Sigma$.

 \subsection{Description of the main theorem}
 If $\cSs(\BM):\Sigma_-\to \Sigma_+$ is a decorated cobordism then $\cSs(\BM)$ is a right module over $\cSs(\Sigma_-)$ and a left module over $\cSs(\Sigma_+)$. 
Let $\Bim$ be the ``Morita category'' whose objects are $\cR$-algebras and morphisms are isomorphism classes of bimodules in the category of $\cR$-modules. The composition is given by the tensor product over the mid algebra (which is well defined up to isomorphism). The identity of an algebra $A$ is the isomorphism class of $A$ as left and right bimodule over itself via left and right multiplication. It is a symmetric monoidal category with the tensor product $\otimes_{\cR}$ and symmetry $A_1\otimes_{\cR} A_2\to A_2\otimes_{\cR} A_1$ for every two algebras $A_1,A_2$. 

It is also rigid with the dual of an algebra $A$ being $A^{op}$ and the left evaluation morphism $A^{op}\otimes_{\cR} A\to \cR$ being the isomorphism class of the bimodule $A$ with natural $\cR$-left module structure and right $A^{op}\otimes_{\cR}A$-module structure given by $a\cdot (a_1\otimes a_2)=a_1aa_2$.
Similarly the left coevaluation is the isomorphism class of the bimodule $A$ seen as a right $\cR$-module and a left $A\otimes A^{op}$-module with action $(a_1\otimes a_2) \cdot a=a_1aa_2$. 

Then the following is the main result of this section: 
\begin{theorem}\label{thm.tqft}
$\cSs:\Dcob\to \Bim$ is a symmetric monoidal functor. 
\end{theorem}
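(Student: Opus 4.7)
The plan is to verify three things in turn: (1) that $\cSs$ respects identities and composition in $\Dcob$, (2) that $\cSs$ transforms disjoint unions into tensor products at the level of algebras and bimodules, and (3) that the symmetry isomorphisms match. The identity statement is the easiest: for $\Id_{(\Sigma,\P)} = (\Sigma\times[-1,1],\P\times[-1,1])$, the module $\cSs(\Id_{(\Sigma,\P)})$ is $\cSs(\Sigma,\P)$ itself by Definition in Section~\ref{sec.basis0}, and the left/right actions defined via Proposition~\ref{prop:module} reduce to stacking skeins above or below the slice $\Sigma\times\{0\}$, which is precisely the multiplication in $\cSs(\Sigma,\P)$. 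Hence $\cSs(\Id_{(\Sigma,\P)})$ is the regular bimodule, i.e.\ the identity morphism in $\Bim$.

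The crucial step is functoriality under composition. Given $\BM_1:\Sigma_-\to\Sigma$ and $\BM_2:\Sigma\to\Sigma_+$, let $\BM=\BM_2\circ\BM_1$ and let $\Sigma^{\#}\hookrightarrow M$ denote the image of the common gluing surface inside the composite. Because each component of $\partial^{\pm}\BM_i$ has at least one marked point (by the definition of decorated cobordism and of marked surface), $\Sigma^{\#}$ is a strict subsurface of $\BM$ in the sense of Section~\ref{sec:splitting}, so Theorem~\ref{thm.split3} applies and yields
\[
\cSs(\BM)\;\cong\;\mathrm{HH}_0\bigl(\cSs(\Sl_{\Sigma^{\#}}(\BM))\bigr).
\]
Next, I would observe that the slit manifold $\Sl_{\Sigma^{\#}}(\BM)$ is diffeomorphic, as a decorated cobordism, to the disjoint union $\BM_1\sqcup\BM_2$: the gluing region is cut open and the previously-joined side arcs split back into the original side arcs of $\BM_1$ and $\BM_2$. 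Using the monoidality statement for disjoint unions (to be proved below), this gives $\cSs(\Sl_{\Sigma^{\#}}(\BM))\cong\cSs(\BM_1)\otimes_{\cR}\cSs(\BM_2)$, where $\cSs(\Sigma)$ acts on $\cSs(\BM_1)$ on the right via its incarnation as $\partial^+\BM_1$ and on $\cSs(\BM_2)$ on the left via its incarnation as $\partial^-\BM_2$. The definition of $\mathrm{HH}_0$ then produces the Morita composition
\[
\cSs(\BM)\;\cong\;\cSs(\BM_2)\otimes_{\cSs(\Sigma)}\cSs(\BM_1),
\]
and the residual outer actions of $\cSs(\Sigma_+)$ and $\cSs(\Sigma_-)$ (coming from $\partial^+\BM_2$ and $\partial^-\BM_1$, which are untouched by the slit) exhibit this as the correct bimodule, so composition is preserved.

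For the monoidal structure, disjoint unions on the $\Dcob$ side correspond to tensor products in $\Bim$ because any $\cN$-tangle in $M_1\sqcup M_2$ decomposes uniquely as a disjoint union of an $\cN_1$-tangle and an $\cN_2$-tangle, and all defining skein relations are local; this yields algebra isomorphisms $\cSs(\Sigma_1\sqcup\Sigma_2)\cong\cSs(\Sigma_1)\otimes\cSs(\Sigma_2)$ and analogous bimodule isomorphisms for disjoint unions of cobordisms. The unit $\cSs(\emptyset)=\cR$ is immediate since the only $\cN$-tangle in an empty manifold is the empty one. The symmetry isomorphism on the $\Dcob$ side, implemented by the obvious decorated cobordism swapping two copies, sends to the swap bimodule in $\Bim$, and the coherence diagrams reduce to standard checks. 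The main obstacle in carrying out this program is the careful bookkeeping of orientations and sign conventions: one must verify that the handedness of the actions produced by Proposition~\ref{prop:module} and by the slitting construction in Theorem~\ref{thm.split3} match the left/right conventions of $\Bim$ consistently across identity, composition, duals (evaluation and coevaluation cobordisms), and the symmetry morphism; the rest is a diagrammatic verification.
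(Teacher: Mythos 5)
Your proposal matches the paper's proof almost step for step: the functoriality on composition is reduced to Theorem~\ref{thm.split3} via the observation that $\Sl_{\Sigma}(\BM_2\circ\BM_1)\cong\BM_1\sqcup\BM_2$, whose $\mathrm{HH}_0$ is exactly the Morita tensor product $\cSs(\BM_2)\otimes_{\cSs(\Sigma)}\cSs(\BM_1)$, and symmetric monoidality is read off from the compatibility of disjoint unions with tensor products. You are in fact slightly more explicit than the paper (you verify the identity morphism goes to the regular bimodule, which the paper leaves implicit), and you correctly flag the orientation/sign bookkeeping as the place where care is required.
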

\begin{proof}
Assuming first that $\cSs$ is a functor, its symmetric monoidality is a direct consequence of the fact that $\cSs(\Sigma_1\sqcup \Sigma_2)=\cSs(\Sigma_1)\otimes_\cR \cSs(\Sigma_2)$ and $\cSs(\BM_1\sqcup \BM_2)=\cSs(\BM_1)\otimes_\cR \cSs(\BM_2)$ for every marked surfaces $\Sigma_1,\Sigma_2$ and decorated cobordisms $\BM_1,\BM_2$. 

In order to prove that $\cSs$ is a functor, we have to prove that if $\BM_1:\Sigma_{-1}\to \Sigma$ and $\BM_2:\Sigma\to \Sigma_1$ are decorated cobordisms then $\cSs(\BM_2\circ \BM_1)=\cSs(\BM_2)\otimes_{\cSs(\Sigma)}\cSs(\BM_1)$ as $(\cSs(\Sigma_1),\cSs(\Sigma_{-1}))$-bimodules.

Let $i_1:\BM_1\hookrightarrow \BM_2\circ \BM_1$ and $i_2:\BM_2\hookrightarrow \BM_2\circ \BM_1$ be the natural inclusions. We need to prove that the map 
$$(i_2)_*\otimes (i_1)_*:\cSs(\BM_2)\otimes_{\cR} \cSs(\BM_1)\to \cSs(\BM)$$
factors through an isomorphism of $\cR$-modules
$$\phi_*:\cSs(\BM_2)\otimes_{\cSs(\Sigma)} \cSs(\BM_1)\to \cSs(\BM)$$
which is an isomorphism of $\cSs(\Sigma_-)$ and $\cSs(\Sigma_+)$ bimodules. 
Observing that $\BM_1\sqcup \BM_2=\Sl_{\Sigma}(\BM)$, this is a direct consequence of Theorem \ref{thm.split3}.
The fact that $\phi_*$ is an isomorphism of left $\cSs(\Sigma_1)$-modules (resp. of right $\cSs(\Sigma_{-1})$-modules) is a direct consequence of the definition of the actions as $\Sigma_1$ (resp. $\Sigma_{-1}$) are far from $\Sigma$. 
\end{proof}

\subsection{Immediate corollaries of Theorems \ref{thm.split3} and \ref{thm.tqft}}\label{sub:results}
\def\spi{\mathcal{S}_\pi}

\begin{proposition}\label{prop:HH0}
Let $\Sigma$ be a marked surface. Then $\cSs(\Sigma\times S^1)={\rm HH_0}(\cSs(\Sigma))=\cSs(\Sigma)/\{x\cdot y-y\cdot x\}$ where $\cSs(\Sigma)$ is seen as a left and right module over itself.
\end{proposition}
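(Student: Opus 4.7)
The strategy is to realise $\Sigma\times S^1$ as a self-gluing of the identity cobordism $\Sigma\times [0,1]$ and then invoke the splitting theorem, Theorem \ref{thm.split3}. Concretely, I would fix a basepoint $*\in S^1$ and take
\[
\Sigma_0 := \Sigma\times\{*\}\ \subset\ \Sigma\times S^1
\]
as a candidate strict subsurface. The first step is to verify the hypotheses of Definition/Section \ref{sec:splitting}: the boundary $\partial\Sigma\times\{*\}$ lies in $\partial^s(\Sigma\times S^1)=\partial\Sigma\times S^1$; the surface $\Sigma_0$ is transverse to the marking $\cN=\cP\times S^1$ of $\Sigma\times S^1$, meeting it in $\cP\times\{*\}$; and every connected component of $\Sigma_0$ meets $\cN$ because every component of a marked surface has at least one marked point.

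Next, I would identify the slit $\Sl_{\Sigma_0}(\Sigma\times S^1)$, together with its natural marking, with the thickening $(\Sigma\times[0,1],\cP\times[0,1])$; cutting $S^1$ at $*$ produces an interval, and the induced marking on the two boundary copies $\Sigma_1,\Sigma_2$ of $\Sigma$ is precisely $\cP$ with its given signs. Hence
\[
\cSs\bigl(\Sl_{\Sigma_0}(\Sigma\times S^1)\bigr)\ =\ \cSs(\Sigma\times[0,1])\ =\ \cSs(\Sigma).
\]

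The third step is to check that the $\cSs(\Sigma)$--bimodule structure on this slit provided by Section \ref{sec:splitting} agrees with the regular bimodule structure of $\cSs(\Sigma)$ over itself. This is a direct unwinding of the action defined in Section \ref{def:bimodules}: the action of $\alpha\in\cSs(\Sigma)=\cSs(\Sigma_1)$ on $m\in\cSs(\Sigma\times[0,1])$ is obtained by pushing $m$ away from $\Sigma_1$ and taking the disjoint union with a copy of $\alpha$ placed in a collar of $\Sigma_1$, which is precisely the stacking product in the algebra $\cSs(\Sigma)$ (from above). The same holds on the other side with $\Sigma_2$ giving the right multiplication. Thus the two actions coming from $\Sigma_1$ and $\Sigma_2$ coincide with left and right multiplication in $\cSs(\Sigma)$.

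Applying Theorem \ref{thm.split3} now gives
\[
\cSs(\Sigma\times S^1)\ =\ {\rm HH}_0\bigl(\cSs(\Sl_{\Sigma_0}(\Sigma\times S^1))\bigr)\ =\ {\rm HH}_0(\cSs(\Sigma))\ =\ \cSs(\Sigma)/\{x\cdot y-y\cdot x\},
\]
as claimed. I expect no serious obstacles: the only delicate point is the bookkeeping that identifies the slit bimodule with the regular bimodule (in particular that the signs and orientations of the marked points on $\Sigma_1$ and $\Sigma_2$ are inherited correctly from $\cP$), but this is immediate from the product definition. As a sanity check, this proof is consistent with the TQFT viewpoint of Theorem \ref{thm.tqft}: the cobordism $\Sigma\times S^1=ev_\Sigma\circ coev_{\Sigma^*}$ is the categorical trace of $\Id_\Sigma$, and in the Morita category the trace of the identity bimodule is precisely ${\rm HH}_0$ of the algebra.
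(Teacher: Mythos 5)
Your proof is correct and follows exactly the same route as the paper: slit $\Sigma\times S^1$ along $\Sigma\times\{*\}$ to obtain $\Sigma\times[-1,1]$ and apply Theorem \ref{thm.split3}. The paper's proof is terser (it treats the identification of the slit bimodule with the regular bimodule as obvious), whereas you spell out the hypothesis check and the bimodule comparison, but the underlying argument is the same.
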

\begin{proof}
Observe that $\Sl_{\Sigma\times \{1\}}(\Sigma \times S^1)$ is diffeomorphic as a marked manifold to $\Sigma\times [-1,1]$.
Therefore the statement is an immediate corollary of  Theorem \ref{thm.split3}.
\end{proof}

Let $(M,\cN)$ be a marked connected oriented $3$-manifold, and let $\hat{M}=M\setminus D^3$ be the complement of an open ball in $int(M)$. Then  $\partial \hat{M}=S^2\sqcup \partial M$; decompose $S^2$ as $\partial (D^2\times [-1,1])$ and set $\partial^\pm\hat{M}=D^2\times \{\pm 1\}$ and $\partial^s \hat{M}=(\partial D^2)\times [-1,1]\sqcup \partial M$; finally let $\hat{\cN}=\{1\}\times [-1,1]\sqcup \cN$. 
Then $\BM=(\hat{M},\partial^+\hat{M},\partial^-\hat{M},\partial^s\hat{M}, \hat{\cN})$ is a decorated cobordism providing a morphism in $\Dcob$ 
$$\BM:(D,(p,+))\to (D,(p,+))$$
from the disc with one marked point to itself.  

\begin{definition}[$SL_2$-Quantum fundamental group]
Let $\spi(M)=\cSs(\BM)$ as an $\OSL$-comodule with respect to the only edge in $\hat{\cN}\setminus \cN$. 
\end{definition}
The following is straightforward:
\begin{proposition}
$\spi$ is a functor from the category whose objects are oriented connected $3$-manifolds and morphisms are orientation preserving embeddings to the category of $\OSL$-comodules. 
\end{proposition}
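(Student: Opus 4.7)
The plan is to show three things: (a) the $\OSL$-comodule $\spi(M)$ is well-defined up to canonical isomorphism (that is, independent of the choice of open ball $D^3\subset \mathrm{int}(M)$ and of the distinguished edge on the resulting sphere); (b) an orientation-preserving embedding $f:M\hookrightarrow M'$ induces a morphism $\spi(f):\spi(M)\to \spi(M')$ of $\OSL$-comodules; (c) identities are preserved and composition is respected.

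For (a), since $M$ is connected and oriented, any two smoothly embedded closed balls in $\mathrm{int}(M)$ are ambient isotopic through orientation-preserving diffeomorphisms of $M$. Likewise, an oriented open interval on a sphere $S^2$ is unique up to isotopy. Hence, given any two choices $(D_1,e_1)$ and $(D_2,e_2)$ of removed ball and distinguished edge, one obtains a diffeomorphism of pairs $(\hat M_1,\hat{\cN}_1)\cong (\hat M_2,\hat{\cN}_2)$ of marked 3-manifolds, unique up to isotopy. The induced isomorphisms on $\cSs$ assemble $\spi(M)$ into a strict isomorphism class in the sense introduced in the excerpt, giving a well-defined $\cR$-module. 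The comodule structure is defined by cutting along a parallel copy of the distinguished edge (Proposition \ref{prop:comodule}), and since any two such cuttings are related by isotopies supported in a neighbourhood of the removed ball, the $\OSL$-coaction is independent of the choices as well.

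For (b), given $f:M\hookrightarrow M'$, choose a closed ball $D\subset \mathrm{int}(M)$ and use $f(D)$ as the distinguished ball in $M'$, with the distinguished arc on $\partial f(D)$ being the image under $f$ of the distinguished arc on $\partial D$. Then $f$ restricts to an embedding of marked 3-manifolds $\hat f:(\hat M,\hat{\cN})\hookrightarrow (\hat{M'},\hat{\cN'})$, and we set $\spi(f):=\hat f_*$. This map is an $\OSL$-comodule morphism because the coaction is computed by cutting along a small disk $D'$ lying in the collar of $\partial f(D)$, which we may assume to lie in the image of $f$; by naturality of the cutting homomorphism $\Cut_{D',c'}$ under embeddings disjoint from $D'$, we have $\Delta_{M'}\circ \hat f_*=(\hat f_*\otimes \mathrm{id}_{\OSL})\circ \Delta_M$.

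For (c), functoriality on identities is immediate from the construction. For composition, given $f:M\hookrightarrow M'$ and $g:M'\hookrightarrow M''$, choose the ball in $M'$ to be $f(D)$ and the ball in $M''$ to be $g(f(D))$; then $\widehat{g\circ f}=\hat g\circ \hat f$ as embeddings of marked 3-manifolds, so $\spi(g\circ f)=\spi(g)\circ\spi(f)$. The only subtlety is that a priori $\spi(f)$ is defined with one choice of ball in $M'$ and $\spi(g)$ with another; but by (a) these choices yield the same morphism after identifying the resulting $\spi(M')$ via its canonical strict isomorphism class. The main obstacle, and really the only nontrivial point, is ensuring compatibility of the various isotopies in (a) so that all diagrams of identifications commute strictly; this is handled by observing that the space of (ball, arc) choices is connected with simply connected path space in the relevant sense, so the induced isomorphisms on $\cSs$ are canonical.
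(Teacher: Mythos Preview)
The paper offers no proof; it simply declares the proposition ``straightforward'' immediately before stating it. Your three-step outline---well-definedness of $\spi(M)$, construction of $\spi(f)$ for an embedding, and functoriality---is exactly the argument one would write out, and the key mechanisms (ambient isotopy of balls in a connected oriented $3$-manifold, naturality of the cutting coaction under embeddings whose image contains the cut) are correctly invoked. So your proposal is already more detailed than the paper's treatment.

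The one place your argument is not airtight is the final sentence. Asserting that the space of (ball, arc) choices has ``simply connected path space in the relevant sense'' is not proved, and taken literally it is false in general: $\pi_1(\mathrm{Emb}^+(D^3,\mathrm{int}\,M))$ surjects onto $\pi_1(M)$ via the frame bundle, so the parameter space typically has nontrivial fundamental group. What you actually need is weaker, namely that a loop of such choices induces the \emph{identity automorphism} of $\cSs(\hat M,\hat\cN)$; equivalently, that the ball-pushing diffeomorphism of $(\hat M,\hat\cN)$ coming from such a loop acts trivially on stated skeins. This is plausible but not immediate, and the paper does not address it either. In practice one can sidestep the issue by regarding $\spi$ as a functor well-defined up to natural isomorphism, or by fixing once and for all a ball in each $M$ and observing that the paper's notion of ``strict isomorphism class'' only requires the transition maps to compose correctly \emph{up to isotopy}, which they do since any two isotopy extensions of the same loop of balls differ by an isotopy of $M$ fixing $D$ pointwise, hence by an isotopy of $(\hat M,\hat\cN)$. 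With that refinement your argument is complete, and in any case it matches the level of rigor the authors intended.
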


The following is then a direct corollary of Theorem \ref{thm.tqft}:
\begin{theorem}[Van Kampen's type theorem]\label{teo:fundgroup}
Let $M_1$ and $M_2$ be two connected, oriented manifolds. Then $$ \spi(M_1\# M_2)=\spi(M_1)\otimes_{\cR} \spi(M_2).$$
\end{theorem}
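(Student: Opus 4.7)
The strategy is to realise $\widehat{M_1\# M_2}$ as a composition in the category $\Dcob$ and then invoke the functoriality of $\cSs$ established in Theorem~\ref{thm.tqft}. Specifically, I plan to show that if $\BM_i:(D,(p,+))\to (D,(p,+))$ is the decorated cobordism defining $\spi(M_i)$, then the composition $\BM_2\circ \BM_1$ in $\Dcob$ is precisely the decorated cobordism underlying $\spi(M_1\# M_2)$.

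For the geometric identification: by definition, $\BM_2\circ\BM_1$ is obtained by gluing $\partial^+\BM_1=D^2\times\{+1\}$ to $\partial^-\BM_2=D^2\times\{-1\}$, discs sitting on the boundary spheres of $\hat{M_1}$ and $\hat{M_2}$. After gluing, the remaining hemispheres $D^2\times\{-1\}\subset \partial\hat{M_1}$ and $D^2\times\{+1\}\subset \partial\hat{M_2}$, together with the equatorial cylinder $\partial D^2\times[-1,1]$ assembled from the side boundaries, form a single sphere $S^2_{\mathrm{new}}$ on the boundary of the composed manifold. Topologically the result is diffeomorphic to $(M_1\# M_2)$ with an open ball removed, the removed ball being a regular neighbourhood of an arc crossing the connect-sum sphere transversally in a single point; this is exactly $\widehat{M_1\# M_2}$. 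Moreover the decorated structure matches: $\partial^\pm$ of the composition are the two hemispheres of $S^2_{\mathrm{new}}$, $\partial^s$ contains the equatorial cylinder together with $\partial M_1\cup \partial M_2$, and the two distinguished arcs $\{1\}\times[-1,1]\subset \hat{M_i}$ concatenate across the glued disc into a single arc, matching the defining arc of $\hat{\cN}$ for $\widehat{M_1\# M_2}$.

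Once this identification is verified, Theorem~\ref{thm.tqft} immediately yields
\[
\cSs(\widehat{M_1\# M_2})=\cSs(\BM_2)\otimes_{\cSs(D,(p,+))}\cSs(\BM_1).
\]
The intermediate algebra is $\cSs(D,(p,+))=\cSs(\PP_1)\cong \cR$ (recalled in Subsection~\ref{ss.Oq}), so the relative tensor product collapses to $\spi(M_2)\otimes_\cR \spi(M_1)\cong \spi(M_1)\otimes_\cR \spi(M_2)$ via the symmetry of $\otimes_\cR$, giving the desired $\cR$-module isomorphism. Compatibility with the right $\OSL$-coaction follows by noting that the coaction on $\spi(\widehat{M_1\# M_2})$ with respect to the concatenated arc, computed via cutting as in Proposition~\ref{prop:comodule}, factors through the individual cuttings in $\hat{M_1}$ and $\hat{M_2}$ followed by multiplication of the two resulting $\OSL$-factors using the bialgebra structure, which is exactly the standard tensor-product coaction on the right-hand side. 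The main obstacle is thus the careful verification that $\BM_2\circ \BM_1\cong \widehat{M_1\# M_2}$ as decorated cobordisms, matching the boundary decomposition and the distinguished arc; once this is in hand, the theorem is an immediate consequence of Theorem~\ref{thm.tqft}.
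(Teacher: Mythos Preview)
Your proposal is correct and follows essentially the same approach as the paper: observe that $\BM_2\circ\BM_1$ is the decorated cobordism $\BM_0$ associated to $M_1\#M_2$, and then invoke Theorem~\ref{thm.tqft} together with $\cSs(D,(p,+))\cong\cR$. Your write-up is in fact more detailed than the paper's, which dispatches the result in two lines and leaves both the geometric identification and the comodule compatibility implicit.
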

\begin{proof}
Let $M_0=M_1\#M_2$ and observe that $\BM_2\circ \BM_1=\BM_0$. The statement then follows from Theorem \ref{thm.tqft}. 
\end{proof}

Let now $\mathbf{H}_g^+$ (resp.  $\mathbf{H}_g^-$) be the straight decorated cobordism whose underlying $3$-manifold is a handlebody of genus $g$, $\partial^- \mathbf{H}_g^+=(D^2,+)$ the disc with one marked point (resp. $\partial^+ \mathbf{H}_g^-=(D^2,+)$) , $\partial^s \mathbf{H}_g^\pm$ is a regular neighbourhood of $\partial D^2$ and $\partial^+ \mathbf{H}_g^+=\partial \mathbf{H}_g^+\setminus (\partial^-\mathbf{H}_g^+\sqcup \partial^s\mathbf{H}_g^+)$ (resp. $\partial^- \mathbf{H}_g^-=\partial \mathbf{H}_g^-\setminus (\partial^+\mathbf{H}_g^-\sqcup \partial^s\mathbf{H}_g^-)$).
\begin{theorem}\label{teo:heegaard}
Let $M=H_g\sqcup H'_g$ be  a Heegaard decomposition of a closed oriented $3$-manifold. Then $\spi(M)=\cS(\mathbf{H}^+_g)\otimes_{\cSs(\partial^+ \mathbf{H}^+_g)}\cS(\mathbf{H'}^-_g)$.\end{theorem}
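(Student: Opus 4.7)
The plan is to realize $\spi(M)$ as the value on $M$ of the composition $\mathbf{H'}^-_g \circ \mathbf{H}^+_g$ in $\Dcob$ and then apply the functoriality provided by Theorem \ref{thm.tqft}. The entire argument is a bookkeeping reduction to a single geometric identification of decorated cobordisms.

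First, I would position the removed ball $D^3 \subset \mathring M$ used in the definition $\spi(M) = \cSs(\hat M,\hat\cN)$ so that it straddles the Heegaard surface $\Sigma_g$: pick $D^3$ as a regular neighborhood, in a bicollar $\Sigma_g \times (-\epsilon,\epsilon)$, of a small disc $\delta \subset \Sigma_g$. Then $\partial D^3 \cap \Sigma_g = \partial \delta$ and $\partial D^3$ is split by $\partial\delta$ into two hemispheres, one in each handlebody; these hemispheres will play the roles of $\partial^-\mathbf{H}^+_g$ and $\partial^+\mathbf{H'}^-_g$, respectively. The single marked arc $\hat\cN \setminus \cN = \{1\}\times[-1,1] \subset \partial D^3$ of the $\spi$ decoration can be isotoped to cross $\partial\delta$ transversely in one point and to lie in a tiny annular collar of $\partial \delta$ inside $\partial D^3$; this collar will play the role of $\partial^s$.

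Second, I would check that with these choices the pair $(H_g \setminus D^3,\, \text{induced decoration})$ is literally a copy of $\mathbf{H}^+_g$: its underlying $3$-manifold is a handlebody of genus $g$; $\partial^-$ is the hemisphere of $\partial D^3$ in $H_g$, which is a disc with one marked point of sign $+$ sitting where the arc exits; $\partial^s$ is the annular collar in $\partial D^3$ of $\partial \delta$; and $\partial^+$ is the complementary piece $\partial H_g \setminus (\partial^- \sqcup \partial^s)$, a genus-$g$ surface with one boundary circle and one marked point on it. The sign of that marked point on $\partial^+ \mathbf{H}^+_g$ is forced by the convention in Definition \ref{def:decomfd}. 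An identical analysis applied to $H'_g\setminus D^3$ produces $\mathbf{H'}^-_g$ with $\partial^-\mathbf{H'}^-_g$ canonically identified (with reversed orientation as required in $\Dcob$) with $\partial^+\mathbf{H}^+_g$: in both cases the underlying surface is the Heegaard surface with the disc $\delta$ removed, and they carry the same single marked point on $\partial\delta$ because the arc of $\hat\cN$ pierces $\partial\delta$ exactly once. Tracking orientations shows that the signs match as required by the composition rule in $\Dcob$.

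Third, by construction the composition $\mathbf{H'}^-_g \circ \mathbf{H}^+_g$ in $\Dcob$ glues the two handlebodies back along $\Sigma_g \setminus \delta$, producing the decorated cobordism $\hat M$ whose underlying marked 3-manifold is $(\hat M,\hat\cN)$: the two annular pieces of $\partial^s$ combine into the single annulus in $\partial D^3$ between the two hemispheres, and the two halves of the marked arc glue to the single arc of $\hat \cN$ prescribed in the definition of $\spi$. Thus the two decorated cobordisms are diffeomorphic, and $\cSs$ sends them to the same bimodule. Applying Theorem \ref{thm.tqft} to this composition gives
\[
\spi(M) \;=\; \cSs(\mathbf{H'}^-_g \circ \mathbf{H}^+_g) \;=\; \cSs(\mathbf{H}^+_g) \otimes_{\cSs(\partial^+ \mathbf{H}^+_g)} \cSs(\mathbf{H'}^-_g),
\]
which is the claimed identity.

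The only genuine content, and the place where one has to be careful, is the sign/orientation verification in the second and third steps: one must confirm that the single marked point on $\partial^+\mathbf{H}^+_g$ carries the sign that makes it agree, after orientation reversal, with the single marked point on $\partial^-\mathbf{H'}^-_g$, and that the two halves of the $\spi$-arc reassemble coherently along $\partial^s$. Once the conventions are pinned down this is routine, but it is the main obstacle in turning the above sketch into a complete proof.
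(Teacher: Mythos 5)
Your proof takes essentially the same route as the paper's: identify $\hat M$ with the composite $\mathbf{H'}^-_g \circ \mathbf{H}^+_g$ in $\Dcob$, by positioning the removed ball so that it straddles the Heegaard surface, and then invoke the functoriality of $\cSs$ from Theorem \ref{thm.tqft}. The paper's proof is much terser but identical in spirit; the only slip in your write-up is that $\partial^-\mathbf{H}^+_g$ should be a subdisc of the $H_g$-hemisphere of $\partial D^3$ disjoint from the annular collar $\partial^s$ (not the whole hemisphere), so that $\partial^-$, $\partial^s$, $\partial^+$ genuinely partition $\partial(H_g\setminus D^3)$.
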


\begin{proof}
This is a direct consequence of Theorem \ref{thm.tqft} as $\mathbf{H}^+_g$ is diffeomorphic to a cobordism from a disc $(D,(p,+))$ to a genus $g$ surface with one boundary component and one marked point on it and $(\mathbf{H}')^-_g$ is a cobordism from the latter surface back to $(D,(p,+))$ and by construction $ \mathbf{H}^-_g\circ \mathbf{H}^+_g=\hat{M}$.
\end{proof}

\end{document}